\newtheorem{thm}{Theorem}[section]
\newtheorem{lem}[thm]{Lemma}
\newtheorem{prop}[thm]{Proposition}
\newtheorem{cor}[thm]{Corollary}
\newtheorem{theorem}[thm]{Theorem}
\newtheorem{lemma}[thm]{Lemma}
\theoremstyle{definition}
\newtheorem{eg}[thm]{Example}
\newtheorem{defn}[thm]{Definition}
\newtheorem{remark}[thm]{Remark}
\newtheorem{rmk}[thm]{Remark}
\theoremstyle{remark}
\newtheorem{example}[thm]{Example}
\newcommand{\onto}{\twoheadrightarrow}
\def\latex/{{\protect\LaTeX}}
\def\latexe/{{\protect\LaTeXe}}
\def\amslatex/{{\protect\AmS-\protect\LaTeX}}
\def\tex/{{\protect\TeX}}
\def\amstex/{{\protect\AmS-\protect\TeX}}
\def\bibtex/{{Bib\protect\TeX}}
\def\makeindx/{\textit{MakeIndex}}
\newcommand{\NN}{\mathbb{N}}
\newcommand{\wh}{\widehat}
\DeclareMathOperator{\id}{id}
\DeclareMathOperator{\Tor}{Tor}
\DeclareMathOperator{\Ext}{Ext}
\DeclareMathOperator{\Hom}{Hom}
\DeclareMathOperator{\len}{\ell}
\renewcommand{\len}{\ell}
 \DeclareMathOperator{\Ass}{Ass}
 \DeclareMathOperator{\Soc}{Soc}
 \DeclareMathOperator{\Supp}{Supp}
 \DeclareMathOperator{\Spec}{Spec}
 \DeclareMathOperator{\im}{im}
 \DeclareMathOperator{\pd}{pd}
 \DeclareMathOperator{\depth}{depth}
 \DeclareMathOperator{\coker}{coker}
\def\p{\mathfrak{p}}
\def\fq{\mathfrak{q}}
\def\fm{\mathfrak{m}}
\def\fa{\mathfrak{a}}
\renewcommand{\bar}{\overline}
\DeclareMathOperator{\Fitt}{Fitt}
 \newcommand{\Min}{\textup{Min}}
\newcommand{\Ann}{\textup{Ann}}
\newcommand{\x}{\boldsymbol{x}}
\def\urltilda{\kern -.15em\lower .7ex\hbox{\~{}}\kern .04em}\def\urldot{\kern -.10em.\kern -.10em}\def\urlhttp{http\kern -.10em\lower -.1ex\hbox{:}\kern -.12em\lower 0ex\hbox{/}\kern -.18em\lower 0ex\hbox{/}}
\def\Tr{\mathrm{Tr}}
\DeclareMathOperator{\e}{\operatorname{{e}}}
\DeclareMathOperator{\E}{\operatorname{{E}}}
\DeclareMathOperator{\rank}{\operatorname{{rank}}}
\DeclareMathOperator{\drank}{\operatorname{{d-rank}}}
\numberwithin{equation}{section}
\definecolor {processblue}{cmyk}{0.96,0,0,0}
\DeclareFontFamily{OT1}{rsfs}{}
\DeclareFontShape{OT1}{rsfs}{n}{it}{<-> rsfs10}{}
\DeclareMathAlphabet{\mathscr}{OT1}{rsfs}{n}{it}
\numberwithin{equation}{section}
\newcommand{\codim}{\operatorname{codim}}
\newcommand{\length}{\ell}
\renewcommand{\length}{\ell}
\newcommand{\al}{\alpha}
\newcommand{\m}{\mathfrak{m}}
\newcommand{\fp}{\mathfrak{p}}
\renewcommand{\le}{\leqslant}\renewcommand{\leq}{\leqslant}
\renewcommand{\ge}{\geqslant}\renewcommand{\geq}{\geqslant}
\DeclareMathOperator{\Assh}{Assh}
\DeclareMathOperator{\torsion}{tor}
\DeclareMathOperator{\sgn}{sgn}
\newcommand{\w}{\omega}
\begin{document}

\title[On a generalization of Ulrich modules and its Applications]{On a generalization of Ulrich modules and its Applications}

\author[E.\ Celikbas]{Ela Celikbas}
\address{Ela Celikbas\\
School of Mathematical and Data Sciences\\
West Virginia University\\
Morgantown, WV 26506-6310, U.S.A}
\email{ela.celikbas@math.wvu.edu}

\author[O.\ Celikbas]{Olgur Celikbas}
\address{Olgur Celikbas\\
School of Mathematical and Data Sciences\\
West Virginia University\\
Morgantown, WV 26506-6310, U.S.A}
\email{olgur.celikbas@math.wvu.edu}

\author[J.\ Lyle]{Justin Lyle}
\address{Justin Lyle \\
Department of Mathematical Sciences\\
University of Arkansas\\
Fayetteville, AR 72701 USA}
\email{jl106@uark.edu}
\urladdr{https://jlyle42.github.io/justinlyle/}

\author[R.\ Takahashi]{Ryo Takahashi}
\address{Ryo Takahashi\\
Graduate School of Mathematics, Nagoya University, Furocho, Chikusaku, Nagoya, Aichi 464-8602, Japan}
\email{takahashi@math.nagoya-u.ac.jp}
\urladdr{https://www.math.nagoya-u.ac.jp/~takahashi/}

\author[Y.\ Yao]{Yongwei Yao}
\address{Yongwei Yao\\
Department of Mathematics and Statistics, Georgia State University, Atlanta, Georgia 30303, U.S.A.}
\email{yyao@gsu.edu}

\date{\today}

\subjclass[2020]{Primary 13C13, 13C14; Secondary 13D07}

\keywords{Ulrich module, Cohen-Macaulay module, rigidity, test module, torsion, Hilbert-Samuel multiplicity, Betti numbers}

\begin{abstract}
We study a modified version of the classical Ulrich modules, which we call $c$-Ulrich. Unlike the traditional setting, $c$-Ulrich modules always exist. We prove that these modules retain many of the essential properties and applications observed in the literature. Additionally, we reveal their significance as obstructions to Cohen-Macaulay properties of tensor products. Leveraging this insight, we show the utility of these modules in testing the finiteness of homological dimensions across various scenarios.
\end{abstract}

\maketitle

\section{Introduction}

Let $R$ be a commutative Noetherian local ring. Over the past several decades, there has been a great deal of activity in studying maximal Cohen-Macaulay $R$-modules requiring a large number of generators \cite{BH89,BH87,CM15,CM12,DG23,DK23,GO16,HH05,Ha99,HU91,JL07,KT19,LM19,Ma20,Ul84,Yh21}. 
Those that require the largest possible number are known in the literature as Ulrich modules, and they bear deep connections to several distinct areas of commutative algebra and algebraic geometry. They are naturally connected with multiplicity theory and measures of singularity, they have test module properties in the homological sense, and their existence is known to imply a famous and long-standing conjecture of Lech on flat local maps \cite{HH05,Ha99,JL07,LM19,Ul84}. Indeed, work of Ma on sequences of modules that become Ulrich in a limiting sense was recently used to establish Lech's conjecture in the graded setting \cite{Ma20}. On the other hand, results of Yhee show that Ulrich modules in the traditional sense need not exist, and even the milder sequences used by Ma in the graded setting do not exist in general \cite{Yh21}. So, it is natural to explore weakenings of the Ulrich condition that are more abundant, but which still retain some of the desired properties and applications of Ulrich modules. 

In this paper, we consider several angles along which the Ulrich condition may be weakened. By relaxing the maximal Cohen-Macaulay requirement, we ensure the notions we consider will always exist, and we show that many of the familiar properties of Ulrich modules are retained in our setting. Our novel approach is to tie these generalized notions to Cohen-Macaulay properties of tensor products, itself an active line of investigation (see e.g. \cite{CG19,Co95,HW94}), and in doing so, we are able to refine, expand, and strengthen numerous results about Ulrich modules from the literature. Along the way, we provide several technical tools of independent interest. 

To be concrete, given a real number $c$, we say a Cohen-Macaulay $R$-module $M$ is \emph{$c$-Ulrich} if $\e_R(M) \leq c \cdot \mu_R(M)$. If $c=1$, we simply call $M$ an Ulrich module. The classically studied notion of an Ulrich module discussed previously corresponds to the case where $c=1$ and $M$ is maximal Cohen-Macaulay.
The following is a key special case of our first main theorem. 

\begin{thm}\label{intro:mainthm} Let $R$ be a Cohen-Macaulay local ring and let $M$ and $N$ be nonzero finitely generated $R$-modules. Suppose that $N$ has positive rank $r$ and that $M$ is $c$-Ulrich for some $c<1 + \frac{1}{r}$. If $M\otimes_RN$ is maximal Cohen-Macaulay, then $N$ is free.
\end{thm}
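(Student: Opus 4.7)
The plan is first to extract $\mu_R(N) = r$ via a multiplicity comparison and then to deduce freeness from the Cohen--Macaulayness of $R$ alone. I break this into four steps.

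First, since $M \otimes_R N$ is maximal Cohen--Macaulay of dimension $d := \dim R$, and $\operatorname{Supp}(M \otimes_R N) = \operatorname{Supp}(M) \cap \operatorname{Supp}(N)$, we must have $\dim M = d$; combined with $M$ being Cohen--Macaulay, this gives that $M$ is itself MCM, which is essential for the bookkeeping to follow.

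Second, I would compute $e_R(M \otimes_R N)$ using the rank hypothesis on $N$. Choosing $r$ elements of $N$ whose images form a basis of $N \otimes_R Q(R) \cong Q(R)^r$ yields a map $R^r \to N$ whose kernel $K$ is a torsion submodule of $R^r$. Because $R$ is CM, $\operatorname{Ass}(K) \subseteq \operatorname{Ass}(R^r) = \operatorname{Ass}(R) = \operatorname{Min}(R)$, yet $K_{\fp} = 0$ for every $\fp \in \operatorname{Min}(R)$ since $K$ is torsion; hence $\operatorname{Ass}(K) = \emptyset$, forcing $K = 0$. This produces a short exact sequence $0 \to R^r \to N \to T \to 0$ with $\dim T < d$. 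Applying $M \otimes_R -$, both $M \otimes_R T$ and the image of $\operatorname{Tor}_1^R(M, T)$ in $M^r$ have dimension less than $d$, so additivity of the Hilbert--Samuel multiplicity across the resulting four-term exact sequence yields $e_R(M \otimes_R N) = r \cdot e_R(M)$.

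Third, I would apply the classical inequality $\mu_R(X) \le e_R(X)$ for maximal Cohen--Macaulay $X$ (via a minimal reduction of $\m$ that is $X$-regular, after extending scalars to arrange an infinite residue field if needed) to $X = M \otimes_R N$. Combined with $\mu_R(M \otimes_R N) = \mu_R(M)\mu_R(N)$ and the $c$-Ulrich hypothesis $e_R(M) \le c \cdot \mu_R(M)$, this chain becomes
\[
\mu_R(M)\,\mu_R(N) \;\le\; e_R(M \otimes_R N) \;=\; r \cdot e_R(M) \;\le\; rc \cdot \mu_R(M).
\]
Cancelling $\mu_R(M)$ gives $\mu_R(N) \le rc < r+1$, hence $\mu_R(N) \le r$; since $\mu_R(N) \ge \operatorname{rank}_R(N) = r$, we conclude $\mu_R(N) = r$.

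Finally, a minimal generating set of $N$ of cardinality $r$ produces a surjection $R^r \twoheadrightarrow N$ whose kernel $K'$ has rank zero; repeating the associated-prime argument from Step~2 with $K'$ in place of $K$ shows $K' = 0$, so $N \cong R^r$ is free. I expect Step~2 to be the main source of difficulty: one must carefully verify that every ``error term'' introduced when tensoring with $M$ has dimension strictly less than $d$, so that the Hilbert--Samuel multiplicities really do cancel as asserted; the remaining steps are clean applications of the MCM inequality and standard associated-prime arithmetic.
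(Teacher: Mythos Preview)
Your proof is correct and rests on the same two pillars as the paper's argument: the identity $\e_R(M\otimes_R N)=r\cdot\e_R(M)$ (which the paper extracts from the associativity formula, while you build an explicit short exact sequence $0\to R^r\to N\to T\to 0$) and the maximal Cohen--Macaulay inequality $\mu_R(M\otimes_R N)\le\e_R(M\otimes_R N)$. The only organizational difference is in the endgame: you deduce $\mu_R(N)\le rc<r+1$ directly and then invoke the Cohen--Macaulayness of $R$ to conclude that $\mu_R(N)=\operatorname{rank}_R(N)$ forces $N$ free, whereas the paper argues by contradiction, computing the multiplicity of $\ker\big(M^{\oplus\mu_R(N)}\twoheadrightarrow M\otimes_R N\big)$ in two ways. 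The paper's packaging is designed to feed into the more general Theorem~3.3, where $R$ need not be Cohen--Macaulay and your final ``$\mu_R(N)=\operatorname{rank}_R(N)$ implies free'' step is unavailable; for the Cohen--Macaulay hypothesis actually stated here, your direct argument is equally valid and slightly cleaner.
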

Under the stronger assumption that $M$ is Ulrich, we obtain the same conclusion under more flexible hypotheses.
\begin{thm} \label{mainthmintro} Let $R$ be a local ring and let $M$ and $N$ be finitely generated $R$-modules with $M \ne 0$. Suppose $M$ is Ulrich and that at least one of the following holds:
\begin{enumerate}[\rm(i)]
\item $M$ is faithful.
\item $N$ has rank.
\item $\Tor_1^R(M,N)=0$.
\end{enumerate}
If $M \otimes_R N$ is maximal Cohen-Macaulay, then $N$ is free.
\end{thm}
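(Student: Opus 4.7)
The plan is to treat all three cases within a unified framework, differentiating only at the last step. After the standard reduction to the case of infinite residue field via the faithfully flat extension $R \to R[t]_{\fm R[t]}$ (which preserves all hypotheses in (i)--(iii) and reflects freeness), fix a minimal reduction $\underline{x} = x_1, \ldots, x_d$ of $\fm$ that is a system of parameters. Since $M \otimes_R N$ is maximal Cohen-Macaulay, $\dim M = \dim N = d$, and so $M$ is itself maximal Cohen-Macaulay (as $M$ is Cohen-Macaulay by the Ulrich hypothesis). The defining identity $\fm M = \underline{x} M$ for Ulrich modules then passes to $\fm(M \otimes_R N) = \underline{x}(M \otimes_R N)$, which combined with the maximal Cohen-Macaulay property of $M \otimes_R N$ yields the key length identity $\e_R(M \otimes_R N) = \mu_R(M \otimes_R N) = \mu_R(M)\mu_R(N)$.

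Next, take a minimal presentation $0 \to K \to R^b \to N \to 0$ with $b = \mu_R(N)$ and apply $M \otimes_R -$ to obtain the four-term exact sequence
\[
0 \to \Tor_1^R(M, N) \to M \otimes_R K \to M^b \to M \otimes_R N \to 0.
\]
Let $C$ be the image of $M \otimes_R K \to M^b$. The depth lemma applied to $0 \to C \to M^b \to M \otimes_R N \to 0$ forces $C$ to be maximal Cohen-Macaulay (or zero). Since $\underline{x}$ is then a regular sequence on each of $C$, $M^b$, and $M \otimes_R N$, modding out preserves exactness, and the length identity above shows $\ell_R(C/\underline{x}C) = b\mu_R(M) - \mu_R(M)\mu_R(N) = 0$. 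By Nakayama $C = 0$, hence $M^b \cong M \otimes_R N$ and the natural map $M \otimes_R K \to M^b$ is the zero map.

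The three cases now diverge only in showing $K = 0$. In case (iii), $\Tor_1^R(M, N) = 0$ forces $M \otimes_R K = 0$, so $\mu_R(K) = 0$ (as $\mu_R(M) \geq 1$) and hence $K = 0$. In case (i), the vanishing of the natural map $M \otimes_R K \to M^b$ unpacks to the condition that for every $k = (k_1, \ldots, k_b) \in K \subseteq R^b$ and every $m \in M$ one has $mk_i = 0$ in $M$; faithfulness of $M$ then forces $k_i \in \ann_R M = 0$, so $K = 0$. In case (ii), localize at a minimal prime $\fp \in \Min R \cap \Supp M$ (which exists since $M$ is maximal Cohen-Macaulay of dimension $d$) to obtain $M_\fp^b \cong (M \otimes_R N)_\fp \cong M_\fp^r$, and comparing lengths over the Artinian local ring $R_\fp$ gives $b = r$. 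The minimal surjection $R^r \twoheadrightarrow N$ then has torsion kernel contained in $R^r$, and since $\Ass R^r = \Ass R = \Min R$ for $R$ Cohen-Macaulay, this kernel has empty associated set and is therefore zero.

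The main obstacle is the reconciliation of the two short exact sequences extracted from the four-term $\Tor$ sequence: the module $M \otimes_R K$ itself need not be maximal Cohen-Macaulay when $\Tor_1^R(M, N) \neq 0$, so the depth lemma cannot be applied directly on the left. The Ulrich identity $\fm M = \underline{x} M$ bypasses this by collapsing $C$ via a single length computation after modding out $\underline{x}$, independently of $\Tor_1^R(M, N)$. From there, each hypothesis in (i)--(iii) is just strong enough to translate the vanishing of $M \otimes_R K \to M^b$ into the vanishing of $K$ itself.
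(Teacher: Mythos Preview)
Your argument is correct and follows the same overall architecture as the paper's: establish that the natural surjection $M^{\mu_R(N)}\twoheadrightarrow M\otimes_R N$ is an isomorphism, and then use each of the three hypotheses to force $K:=\Omega^1_R(N)=0$. The paper reaches the isomorphism $M^{\mu_R(N)}\cong M\otimes_R N$ via multiplicity and unmixedness of $M$ (its Lemma~3.3/Theorem~3.3 show $\e_R(\ker f)\le (c-1)\e_R(M\otimes_R N)<1$ when $c=1$), whereas you obtain it by the more hands-on route of passing to an infinite residue field, using $\fm M=\underline{x}M$ for a minimal reduction $\underline{x}$, and computing lengths after modding out. The endgame in your cases (i)--(iii) is exactly the content of the paper's Proposition~3.2 (the ``High Bridge Lemma''), argued inline. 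So the two proofs are essentially equivalent, with yours trading a multiplicity inequality for a concrete length count; the paper's version has the advantage of extending seamlessly to the $c$-Ulrich setting and to arbitrary $\fm$-primary $I$, while yours is pleasantly self-contained for $c=1$ and $I=\fm$.

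One slip to fix: in case~(ii) you write ``since $\Ass R^r=\Ass R=\Min R$ for $R$ Cohen--Macaulay''. The ring $R$ is not assumed Cohen--Macaulay in this theorem, so you cannot invoke $\Ass R=\Min R$. Fortunately, you do not need it: you already showed $K_\fp=0$ for every $\fp\in\Ass R$ (a surjection $R_\fp^r\twoheadrightarrow R_\fp^r$ between finitely generated modules is an isomorphism over any commutative ring), and since $K\subseteq R^r$ gives $\Ass_R K\subseteq\Ass_R R^r=\Ass R$, any $\fp\in\Ass_R K$ would lie in $\Supp_R K\cap\Ass R$, contradicting $K_\fp=0$. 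Hence $\Ass_R K=\emptyset$ and $K=0$, without any appeal to $\Min R$.
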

Huneke-Wiegand proved that, if $R$ is a hypersurface ring and $M$ and $N$ are finitely generated $R$-modules such that $M$ or $N$ has rank and $M\otimes_RN$ is maximal Cohen-Macaulay, then $M$ or $N$ is free \cite[3.1]{HW94}. Hence Theorem \ref{mainthmintro} shows that the homological behavior of Ulrich modules somewhat resembles that of modules over hypersurface rings.

Tensor products of nonzero modules tend to have torsion, but it is not always the case. There are many notable examples of non-free (or even non maximal Cohen-Macaulay) modules in the literature whose tensor product is maximal Cohen-Macaulay. Therefore, the assumption in Theorems \ref{intro:mainthm} and \ref{mainthmintro} that $M\otimes_R N$ is maximal Cohen-Macaulay does not necessarily imply the freeness of $N$ unless the Ulrich or $c$-Ulrich hypothesis on $M$ is satisfied. In Examples \ref{egT2} and \ref{egT3}, we present such instances where both $M$ and $N$ have rank to illustrate this scenario.

There are several notable consequences of our main theorems, including expansions of several results on test module and rigidity properties from the literature (Corollaries \ref{pdtest:alpha}, \ref{corExt1}, and \ref{corExt2}), and applications (Corollary \ref{hunwie}) to the famous and long-standing Huneke-Wiegand conjecture \cite{HW94}. We also use a certain numerical deviation from the Ulrich condition to provide a lower bound on the growth rate of Betti numbers, providing a variation on a result of Jorgensen-Leuschke and recovering a well-known result of Avramov in the process (see Theorem \ref{Ulrichgrowtoseh} and Corollary \ref{canonicalUlrich}). Some of the key applications include the following; see Corollaries \ref{corExt2} and \ref{hunwie} for more details:

\begin{cor} \label{corRintro} Let $R$ be a $d$-dimensional Cohen-Macaulay local domain and let $M$ and $N$ be nonzero finitely generated $R$-modules. Suppose $N$ is maximal Cohen-Macaulay with $r=\rank(N)$.

\begin{enumerate}[\rm(i)]

\item If $M$ is $c$-Ulrich for some $c<1+\frac{1}{r}$ and $\Ext^i_R(M,N)=0$ for $i=1,\dots,d$, then $\id_R(N)<\infty$. 

\item If $M$ is Ulrich, $d=1$, and $M \otimes_R M^*$ is torsion-free, then $M$ is free and $R$ is regular. In particular, the Huneke-Wiegand conjecture holds true for Ulrich modules. 

\end{enumerate}
\end{cor}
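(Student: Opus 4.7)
The plan is to prove (ii) and (i) by reducing each to an application of Theorem \ref{mainthmintro} or Theorem \ref{intro:mainthm}, respectively, on a suitable tensor product.

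For (ii), since $R$ is a one-dimensional Cohen-Macaulay local domain, a finitely generated $R$-module is torsion-free if and only if it is maximal Cohen-Macaulay; in particular $M\otimes_R M^*$ is MCM. As $R$ is a domain and $M\ne 0$, $M$ is faithful, so Theorem \ref{mainthmintro}(i) gives $M^*\cong R^n$ with $n=\rank M$. Then $M^{**}\cong R^n$, and the evaluation map $\Phi\colon M\to M^{**}$ is injective since $M$ is torsion-free over the domain $R$. Writing $K=\coker\Phi$, we have that $K$ is torsion, hence of finite length. Dualizing $0\to M\to R^n\to K\to 0$ and using $\Hom(K,R)=0$ yields
\[
0\to R^n\xrightarrow{\Phi^*} R^n\to\Ext_R^1(K,R)\to 0.
\]
Because $M^*$ is free, hence reflexive, the triangle identity forces $\Phi^*$ to be the inverse of the natural double-dual map $M^*\to M^{***}$, so $\Phi^*$ is an isomorphism and $\Ext_R^1(K,R)=0$.

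Over any Cohen-Macaulay local ring of dimension one, $\Ext_R^1(k,R)\cong(R:\fm)/R$ has positive length (equal to the type of $R$). Hence if $K\ne 0$, a surjection $K\twoheadrightarrow k$ would induce an injection $\Ext_R^1(k,R)\hookrightarrow\Ext_R^1(K,R)$, contradicting the vanishing just obtained. So $K=0$ and $M\cong R^n$ is free. The Ulrich condition then gives $n\cdot\e(R)=\e_R(R^n)=\mu_R(R^n)=n$, so $\e(R)=1$ and $R$ is regular.

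For (i), the plan is to dualize $N$ with respect to a canonical module $\omega$ and invoke Theorem \ref{intro:mainthm} on $M\otimes_R N^\dagger$, where $N^\dagger:=\Hom_R(N,\omega)$. After completing if necessary to ensure the existence of $\omega$, since $N$ is MCM with $\rank N=r$, so is $N^\dagger$, and one has the adjunction $\RHom_R(M,N)\simeq\RHom_R(M\otimes^L_R N^\dagger,\omega)$. The resulting convergent spectral sequence
\[
E_2^{p,-j}=\Ext_R^p(\Tor_j^R(M,N^\dagger),\omega)\Longrightarrow\Ext_R^{p-j}(M,N),
\]
together with the vanishing $\Ext_R^p(-,\omega)=0$ for $p>d$, permits one to propagate the hypothesis $\Ext_R^i(M,N)=0$ ($i=1,\ldots,d$) through the diagonals, beginning at the corner $E_2^{d,0}=\Ext_R^d(M\otimes_R N^\dagger,\omega)$, and conclude that $\Tor_j^R(M,N^\dagger)=0$ for all $j\ge 1$ and that $M\otimes_R N^\dagger$ is MCM. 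Theorem \ref{intro:mainthm}, applied with $c<1+\tfrac{1}{r}=1+\tfrac{1}{\rank N^\dagger}$, then yields $N^\dagger$ free, so $N\cong\omega^s$ for some $s\ge 0$, whence $\id_R N=\id_R\omega=d<\infty$.

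The main obstacle is the spectral-sequence step in (i): the hypothesis controls $\Ext^i(M,N)$ only for $1\le i\le d$, yet one must extract simultaneously the vanishing of all $\Tor_j(M,N^\dagger)$ with $j\ge 1$ and the MCM property of $M\otimes_R N^\dagger$. A careful descending induction on the diagonals, tracking the subquotient of each $\Ext^i(M,N)$ contributed by $E_\infty^{p,-j}$, appears necessary. The completion reduction also requires a check that the rank hypothesis on $N^\dagger$ is preserved.
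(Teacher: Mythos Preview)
Your argument for (ii) follows the same route as the paper: apply the main theorem to conclude $M^*$ is free, then deduce $M$ is free, then regularity. The paper cites \cite[2.13]{CG19} for the step ``$M^*$ free $\Rightarrow$ $M$ free,'' whereas you give a clean self-contained argument via the biduality map and the triangle identity; that works. One small caveat: the assertion ``$R$ is a domain and $M\ne 0$, so $M$ is faithful'' is false as written (take $M=k$); you need $M$ torsion-free, which you later invoke without justification. In fact the introduction statement is slightly imprecise on this point—the detailed version in the body (Corollary~\ref{hunwie}) adds the hypothesis that $M$ is torsion-free, and under that hypothesis your argument is correct.

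Your spectral-sequence approach to (i), however, has a real gap that does not close by the kind of diagonal induction you suggest. The issue is already visible at $d=3$: one gets $E_2^{d,0}=E_2^{d-1,0}=0$ directly, but for $E_2^{d-2,0}$ the outgoing $d_2$ lands in $E_2^{d,-1}=\Ext^d_R(\Tor_1^R(M,N^\dagger),\omega)$, and the abutment constraints only force this $d_2$ to be an \emph{isomorphism}, not zero. Nothing in the hypotheses controls $\depth\Tor_1^R(M,N^\dagger)$, so you cannot kill $E_2^{d,-1}$ and hence cannot conclude $E_2^{d-2,0}=0$. Your stronger target ``$\Tor_j^R(M,N^\dagger)=0$ for all $j\ge 1$'' is both unnecessary and in general false. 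The paper avoids this entirely: Lemma~\ref{J} works at chain level with the complex $\Hom_R(F_\bullet,N)$, starting from $\depth(\im\partial_n)\ge 1$ (it sits in a free module) and iterating the depth lemma downward through the short exact sequences for $\ker\partial_i$ and $\im\partial_i$. This yields that every cycle, boundary, and homology of the relevant truncation is maximal Cohen-Macaulay; dualizing into $\omega$ then identifies $M\otimes_R N^\dagger\cong \Hom_R(M,N)^\dagger$ as maximal Cohen-Macaulay, and Theorem~\ref{mainthm:alpha} finishes the job. The chain-level argument retains exactly the filtration information the spectral sequence discards.
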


During the preparation of this manuscript, the conclusion of Corollary 1.3(ii) was also obtained independently by Dey and Kobayashi via different techniques; see \cite[5.2 and 7.4(1)]{DK23}.

Finally, we show that if $R$ is Cohen-Macaulay of codimension $v$ and $M$ is $c$-Ulrich for some $c<1+\frac{1}{v}$, then, after extending the residue field $k$ and cutting down by a reduction of the maximal ideal on $M$, the resulting module has a copy of $k$ as a direct summand (see Corollary \ref{cUlrichtest}). Thus $c$-Ulrich modules for such a value of $c$ carry an exceedingly strong form of rigidity, and can be used to test any homological dimension that can be detected by $k$.

\section{Preliminaries}

Throughout, $(R, \fm, k)$ denotes a commutative Noetherian local ring, and $I$ denotes an $\fm$-primary ideal of $R$. All modules over $R$ are assumed to be finitely generated.

\begin{defn} \label{multprop} Given an $R$-module $M\neq 0$ with $\dim_R(M)=d$, we set $$\e_R(I,M)= d! \lim_{n \to \infty} \dfrac{\length_R(M/I^n M)}{n^{d}},$$ to be the \emph{Hilbert-Samuel multiplicity} of $M$ with respect to $I$. We set $\e_R(I,0)$ to be $0$, and we write $\e_R(M)=\e_R(\m,M)$. 
\end{defn}

According to our definition, $\e_R(I, -)$ is not necessarily additive on arbitrary short exact sequences of $R$-modules. However if $0 \to X \to Y \to Z \to 0$ is an exact sequence of $R$-modules such that $X$, $Y$ and $Z$ all have the same dimension, then $\e_R(I,Y)=\e_R(I, X)+\e_R(I, Z)$. The advantage of our definition is that $\e_R(I,M)=0$ if and only if $M=0$ (see \cite[Section 4]{BH93}).

If $M$ is an $R$-module, we write $\mu_R(M)$ for the minimal number of generators of $M$ and $\length_R(M)$ for its length. We write $\Omega^i_R(M)$ for the $i$th syzygy of $M$. The following is well known; the standard proof is a mild adaptation of that of \cite[1.1]{BH87}:

\begin{rmk} \label{bound} Let $M$ be a Cohen-Macaulay $R$-module. Then $\length_R(M/IM) \leq \e_R(I,M)$. So we have
$$\mu_R(M)\leq \length_R(M/IM) \leq \e_R(I,M) \leq \mu_R(M) \cdot \e_R\big(I,R/\Ann_R(M)\big).$$ In particular $\mu_R(M)$ cannot exceed $\e_R(M)$.
\end{rmk}

The following definition gives the primary notion we will study.

\begin{defn}\label{ulrichdefn} Let $M$ be a Cohen-Macaulay $R$-module and let $c$ be a real number. We say $M$ is \textit{c-Ulrich with respect to $I$} if the following conditions hold:
\begin{enumerate}[\ \ \rm(a)]
\item $M/IM$ is free over $R/I$, 
\item $\e_R(I,M) \le c \cdot \length_R(M/IM)$.
\end{enumerate}

If $I=\fm$, we simply say that $M$ is \textit{$c$-Ulrich}. Similarly, if $M$ is $1$-Ulrich with respect to $I$, we say that $M$ is an \textit{Ulrich module with respect to $I$}. We call $M$ \textit{Ulrich} if it is $1$-Ulrich with respect to $\fm$. 
\end{defn}

Although Ulrich modules are generally assumed to be maximal Cohen-Macaulay in the literature, our definition only requires an Ulrich module to be Cohen-Macaulay. In other words, according to our definition and in view of Remark \ref{bound}, a module is \emph{Ulrich} if and only if it is a maximally generated Cohen-Macaulay module. We will show in later sections that many of the familiar properties of Ulrich modules in the classical sense are retained by our notion. Moreover, as the next example indicates, Ulrich modules in our sense always exist, whereas maximal Cohen-Macaulay Ulrich modules do not exist in general \cite{Yh21}. 

\begin{eg}\label{ulrichex} \mbox{}
\begin{enumerate}[\rm(i)]
\item If $I$ is an $\fm$-primary ideal of $R$, then $R/I$ is an Ulrich $R$-module with respect to $I$.
\item We have $k^{\oplus n}$ is an Ulrich $R$-module for each $n\geq 1$. Moreover, if $M$ is an Ulrich module of finite length, then $M \cong k^{\oplus n}$ for some $n\geq 1$; see \cite[1.3]{BH87}. 
\item If $R$ is Cohen-Macaulay with $\dim R=1$, then $\m^n$ is an Ulrich module for all $n \ge \e(R)-1$; see \cite[2.5]{SV74}. If additionally $R$ is analytically unramified, then the integral closure $\bar{R}$ over $R$ is also Ulrich. Indeed, $\bar{R}$ is a product of DVRs, so $\m\bar{R}=x\bar{R}$ for some nonzerodivisor $x \in \m$. 
\item If $R$ is Cohen-Macaulay and has minimal multiplicity, that is, if $\e(R)=\mu_R(\fm)-\dim(R)+1$, and $M$ is maximal Cohen-Macaulay, then $\Omega_R^i M$ is Ulrich for each $i\geq 1$; see \cite{BH87}.
\item If $R$ is a Cohen-Macaulay local ring with a canonical module $\omega_R$, then $\omega_R$ is Ulrich if and only if $R$ is regular. Indeed, we may suppose $k$ is infinite and take a minimal reduction $\x$ of $\m$. Then $\w_R$ being Ulrich forces $\mu_R(\w_R)=\dim_k(\Soc(R/\x R))=\length_R(R/\x R)=\e(R)$ so that $R/\x R$ is a field, which implies $R$ is regular.
\item In view of Remark \ref{bound}, an $R$-module $M$ is $\e_R(R/\Ann_R(M))$-Ulrich if and only if it is Cohen-Macaulay.
\end{enumerate}
\end{eg}

In addition to the cases mentioned in Example \ref{ulrichex}, there are other classes of rings that are known to admit maximal Cohen-Macaulay Ulrich modules. For example, strict complete intersection rings, two-dimensional graded domains over an infinite field, certain Veronese algebras, and coordinate rings of  Grassmannian, Segre, and certain determinantal varieties; see \cite{BH87,CM15,CM12,ES03,Ha05,HU91}. While this list is not comprehensive, is it not far from being so; there are few other cases where maximal Cohen-Macaulay $R$-modules are known to exist.

We present several additional examples of Ulrich and $c$-Ulrich modules, many of which will recur for specific purposes in later sections. Owing to their concrete nature, numerical semigroup rings provide abundant examples of Ulrich and $c$-Ulrich modules.

\begin{eg} Let $R=k[\![t^{a_1}, \ldots, t^{a_n}]\!]$ be a numerical semigroup ring with $0<a_1<\ldots<a_n$ and $\gcd(a_1, \ldots, a_n)=1$.

\begin{enumerate}[\rm(i)]

\item $R$ is $a_1$-Ulrich $R$-module since $\e(R)=a_1$ and $\mu_R(R)=1$. 
\item $\bar{R} \cong k[\![t]\!]$ is an Ulrich module (see Example \ref{ulrichex} (iii)).
\item $(t^{a_1},\dots,t^{a_n})$ is an $(a_1/n)$-Ulrich module. 
\item If $n=3$ and $R$ is not Gorenstein, then the canonical module $\w_R$ of $R$ is $(a_1/2)$-Ulrich and $\Omega^1_R(\w_R)$ is $(a_1/3)$-Ulrich. Indeed, it follows from \cite{He70} that $\mu_R(\w_R)=2$ and $\beta^R_1(\w_R)=3$ and additivity of multiplicity gives $\e_R(\w_R)=\e_R(\Omega^1_R(\w_R))=a_1$.

\end{enumerate}

\end{eg}

The following example demonstrates that maximal Cohen-Macaulay Ulrich modules induce examples of Ulrich modules that are not maximal Cohen-Macaulay, and so it is natural in this sense to work outside the maximal Cohen-Macaulay setting.

\begin{eg} Let $R$ be a local ring. Assume $M$ is a maximal Cohen-Macaulay Ulrich $R$-module. Set $S=R[\![x_1, \ldots, x_n]\!]$ for some variables $x_1, \ldots, x_n$. Then $M$ has an $S$-module structure via the homomorphism $S\to R$ where $x_i \mapsto 0$. Note that $M$, as an $S$-module, is Ulrich, but it is not maximal Cohen-Macaulay. 
\end{eg}

\begin{eg} Let $R=k[\![x]\!]/(x^{n})$ with $n \ge 1$. Then $R/(x^i)$ is $i$-Ulrich for each $i=1,\dots,n$. 
\end{eg}

\begin{eg} Let $R=k[\![x,y]\!]/(x^3+y^4)$ and set $M=\m$. Then $R$ is a one-dimensional hypersurface, and $M$ is a maximal Cohen-Macaulay module. Moreover, $\e_R(M)=\e(R)=3$ and $\mu_R(M)=2$. Hence $M$ is $3/2$-Ulrich. 
\end{eg}

\begin{eg} Let $R=k[\![x,y]\!]/(x^2,xy,y^2)$. Then the injective hull $\E_R(k)$ of $k$ is $(3/2)$-Ulrich. This follows since $\e_R(\E_R(k))=\e(R)=\length_R(R)=3$ and $\mu_R(\E_R(k))=2$ (as the type of $R$ is $2$). 
\end{eg}

\begin{eg} \label{ex:4/3} Let $R=k[\![x,y,z]\!]/(xy,z^2)$ and set $M=\m$. Then $R$ is a one-dimensional complete intersection of codimension two, and $M$ is a maximal Cohen-Macaulay module. Moreover, $\e_R(M)=\e(R)=4$ and $\mu_R(M)=3$. Hence $M$ is $(4/3)$-Ulrich.
\end{eg}

\begin{eg} \label{ex:3/2} Let $R=k[\![t^3, t^4, t^5]\!]$. Then the canonical module $\omega_R=(t^3, t^4)$  is $(3/2)$-Ulrich.
\end{eg}

The next remark, while elementary, is indicative of the need for the condition in Definition \ref{ulrichdefn} that $M/IM$ be a free $R/I$ module, and will be used repeatedly.

\begin{rmk} \label{prelim} Let $M$ and $N$ be $R$-modules such that $M/IM$ and $N/IN$ are both free over $R/I$. 
\begin{enumerate}[\rm(i)]
\item As $M/IM$ is free over $R/I$, it follows that $M \cong (R/I)^{\oplus n}$ for some integer $n$. This implies that $M/\fm M \cong k^{ \oplus n}$, which implies that $\mu_R(M) = n$. Therefore $M/IM$ is a free $R/I$-module of rank $\mu_R(M)$. Similarly $N/IN$ is a free $R/I$-module of rank $\mu_R(N)$. Consequently we conclude that $(M \otimes_R N) /I(M \otimes_R N)$ is a free module over $R/I$ of rank $\mu_R(M \otimes_R N) = \mu_R(M) \cdot \mu_R(N)$. 
\item It follows from part (i) that $\len_R(N/IN)=\mu_R(N) \cdot \len_R(R/I)$, $\len_R(M/IM)=\mu_R(M) \cdot \len_R(R/I)$, and $\len_R\big((M\otimes_RN) /I (M\otimes_RN)\big)=\len_R(R/I) \cdot \mu_R(M) \cdot \mu_R(N) = \len_R(R/I) \cdot \mu_R(M \otimes_R N)$.
\end{enumerate}
\end{rmk}

\begin{defn} \label{maindefn} Let $M$ be an $R$-module.
\begin{enumerate}[\rm(a)]
\item We say $M$ is free on $Y\subseteq \Spec(R)$ if $M_{\fp}$ is a free $R_{\fp}$-module for all $\fp \in Y$. 
\item We say $M$ has rank $r$ on $Y\subseteq \Spec(R)$ if $M_{\fp}\cong R^{\oplus r}_{\fp}$ for all $\fp \in Y$. 
\item If $M$ has rank $r$ on $\Ass(R)$, then we may simply say that $M$ has \emph{rank} $r$.
\item We set $\Assh_R(M)=\{\fp \in \Ass_R(M) : \dim(R/\fp)=\dim_R(M)\}$.
\end{enumerate}
\end{defn}
\begin{defn} \label{unmixed} Given an $R$-module $M$, we call $M$ \emph{unmixed} if $\Ass_R(M)=\Assh_R(M)$. \footnote{We note that our definition of unmixed differs slightly from some notions in the literature, in that our definition does involve completion.}
\end{defn}
It follows by definition that $\Assh_R(M) \subseteq \Min_R(M) \subseteq \Ass_R(M)$. If $M$ is unmixed and $0\neq N$ is an $R$-submodule of $M$, then $\dim(R/\fq)=\dim_R(M)$ for each $\fq \in \Ass_R(N)$ so that $\dim_R(N)=\dim_R(M)$.
\begin{lem} \label{lemrk} Let $R$ be a commutative ring, $M$ be an $R$-module, and let $\emptyset \neq Y \subseteq \Spec(R)$.
\begin{enumerate}[\rm(i)]
\item If $M$ is free on $Y$ and $M$ has rank $r$, then $M$ has rank $r$ on $Y$.
\item If $M$ has rank $r$ on $Y$ and $\Supp_R(M) \cap Y \neq \emptyset$, then $r>0$.
\end{enumerate}
\end{lem}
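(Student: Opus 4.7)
The plan for (i) is to reduce to an associated prime by iterated localization. Fix $\fp\in Y$; by hypothesis $M_\fp$ is free over $R_\fp$, say $M_\fp\cong R_\fp^{\oplus s}$ for some $s\geq 0$, and our goal is to show $s=r$. The key observation is that the localization $R_\fp$ is a nonzero Noetherian ring and therefore has at least one minimal prime; such a minimal prime has the form $\fq R_\fp$ for some $\fq\in\Min(R)$ with $\fq\subseteq\fp$. Since $\Min(R)\subseteq\Ass(R)$, such a prime $\fq$ lies in $\Ass(R)$, so the hypothesis that $M$ has rank $r$ gives $M_\fq\cong R_\fq^{\oplus r}$.

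To finish (i), I would localize $M_\fp$ further at $\fq R_\fp$ to get
\[
M_\fq \;=\; (M_\fp)_{\fq R_\fp} \;\cong\; (R_\fp^{\oplus s})_{\fq R_\fp} \;=\; R_\fq^{\oplus s}.
\]
Comparing the two descriptions of $M_\fq$ yields $R_\fq^{\oplus r}\cong R_\fq^{\oplus s}$ as $R_\fq$-modules. Because $R_\fq$ is a (nonzero) local ring, the rank of a finitely generated free module is well-defined, so $s=r$, completing (i).

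For (ii), pick $\fp \in \Supp_R(M)\cap Y$, which is nonempty by hypothesis. Then $M_\fp \neq 0$, and the rank-$r$-on-$Y$ assumption gives $M_\fp \cong R_\fp^{\oplus r}$. Since $R_\fp\neq 0$, the integer $r$ must be strictly positive, as $r=0$ would force $M_\fp=0$, a contradiction.

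There is no serious obstacle here; both statements are essentially bookkeeping with localizations. The only point that needs a moment of care is the well-definedness of the rank of a free module over the local ring $R_\fq$ in (i), which follows from Nakayama (tensor with the residue field $\kappa(\fq)$ to compare dimensions).
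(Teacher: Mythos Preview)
Your proof is correct and follows essentially the same approach as the paper's: for (i), both arguments pick a minimal prime of $R$ below the given prime in $Y$ and compare ranks after further localization, and for (ii) both simply observe that a nonzero free module over a nonzero local ring has positive rank. The only cosmetic difference is that the paper swaps the roles of the letters $\fp$ and $\fq$, and your write-up adds a little extra justification (Noetherianity for $\Min(R)\subseteq\Ass(R)$, Nakayama for well-definedness of rank) that the paper leaves implicit.
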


\begin{proof} (i) Assume $M$ is free on $Y$ and $M$ has rank $r$. Let $\fq \in Y$. Then $M_{\fq} \cong R_{\fq}^{\oplus v}$ for some $v\geq 0$, and there exists $\fp \in \Min(R)$ such that $\fp \subseteq \fq$. So $R_{\fp}^{\oplus v}\cong (R^{\oplus v}_{\fq})_{\fp R_{\fq}} \cong (M_{\fq})_{\fp R_{\fq}} \cong M_{\fp} \cong R_{\fp}^{\oplus r}$. This shows that $r=v$ and so $M$ has rank $r$ on $Y$. 

(ii) Assume $M$ has rank $r$ on $Y$ and $\Supp_R(M) \cap Y \neq \emptyset$. Then there exists $\fq \in \Supp_R(M) \cap Y$ so that $0\neq M_{\fq} \cong R_{\fq}^{\oplus r}$. Therefore $r>0$.
\end{proof}

\begin{lem} \label{mainlem} Let $M$ and $N$ be $R$-modules.
\begin{enumerate}[\rm(i)]
\item If $\dim_R(M)=\dim_R(M\otimes_RN)$, then $\Assh_R(M\otimes_RN) =\Assh_R(M) \cap \Supp_R(N) $. 
\item Assume $\dim_R(M)=\dim_R(M\otimes_RN)$. If $N$ has rank $r$ and $N$ is free on $\Assh_R(M\otimes_RN)$, then $N$ has rank $r$ on $\Assh_R(M\otimes_RN)$ and $r>0$.
\item If $\Supp_R(N)=\Spec(R)$ (e.g., $N$ has positive rank), then $\Assh_R(M\otimes_RN) =\Assh_R(M)$.
\end{enumerate}
\end{lem}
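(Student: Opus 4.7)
The plan is to handle the three parts in order, with part (i) doing the substantive work and parts (ii) and (iii) following as formal consequences of (i) together with Lemma~\ref{lemrk}.

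For (i), I would use the standard identity $\Supp_R(M\otimes_RN)=\Supp_R(M)\cap\Supp_R(N)$ throughout. Take $\fp\in\Assh_R(M\otimes_RN)$. Since $\dim(R/\fp)=\dim_R(M\otimes_RN)$ is the maximum of $\dim(R/\fq)$ over $\fq\in\Supp_R(M\otimes_RN)$, no prime strictly smaller than $\fp$ can sit in that support, so $\fp$ is minimal there; in particular $\fp\in\Supp_R(N)$. Next, since $\fp\in\Supp_R(M)$ I pick $\fp''\in\Ass_R(M)$ with $\fp''\subseteq\fp$; the dimension hypothesis $\dim_R(M)=\dim_R(M\otimes_RN)$ gives $\dim(R/\fp)\le\dim(R/\fp'')\le\dim_R(M)=\dim(R/\fp)$, forcing $\fp''=\fp$ and thus $\fp\in\Assh_R(M)$. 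Conversely, given $\fp\in\Assh_R(M)\cap\Supp_R(N)$, one has $\fp\in\Supp_R(M\otimes_RN)$ with $\dim(R/\fp)=\dim_R(M)=\dim_R(M\otimes_RN)$, so $\fp$ is minimal in $\Supp_R(M\otimes_RN)$ and thereby in $\Min_R(M\otimes_RN)\subseteq\Ass_R(M\otimes_RN)$.

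For (ii), the assertion that $N$ has rank $r$ on $\Assh_R(M\otimes_RN)$ is an immediate instance of Lemma~\ref{lemrk}(i) with $Y=\Assh_R(M\otimes_RN)$. To deduce $r>0$ from Lemma~\ref{lemrk}(ii), I need $\Supp_R(N)\cap\Assh_R(M\otimes_RN)\ne\emptyset$; by part (i) the set $\Assh_R(M\otimes_RN)$ is contained in $\Supp_R(N)$, so this intersection is just $\Assh_R(M\otimes_RN)$, and the dimension hypothesis $\dim_R(M\otimes_RN)=\dim_R(M)$ guarantees $M\otimes_RN\ne 0$, so $\Assh_R(M\otimes_RN)$ is nonempty.

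For (iii), the assumption $\Supp_R(N)=\Spec(R)$ gives $\Supp_R(M\otimes_RN)=\Supp_R(M)$ and therefore $\dim_R(M\otimes_RN)=\dim_R(M)$, placing us in the scope of part (i), which yields $\Assh_R(M\otimes_RN)=\Assh_R(M)\cap\Spec(R)=\Assh_R(M)$. The parenthetical implication is immediate: if $N$ has positive rank, then $N_{\fp}\ne 0$ for every $\fp\in\Min(R)\subseteq\Ass(R)$, and since every prime of $R$ contains a minimal prime, $\Supp_R(N)=\Spec(R)$. The main subtlety is the argument in (i) that a prime minimal in $\Supp_R(M\otimes_RN)$ actually lies in $\Ass_R(M)$ and not merely in $\Supp_R(M)$; this is precisely where the dimension hypothesis is indispensable, since without it a minimal prime of $M\otimes_RN$ need not be minimal in $\Supp_R(M)$.
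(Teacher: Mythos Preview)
Your proof is correct and follows essentially the same approach as the paper's. The paper's treatment of (i) is marginally more direct---it concludes $\fp\in\Min_R(M)\subseteq\Ass_R(M)$ immediately from $\fp\in\Supp_R(M)$ and $\dim(R/\fp)=\dim_R(M)$, rather than picking an auxiliary $\fp''\in\Ass_R(M)$ and arguing $\fp''=\fp$---but the underlying idea is identical, and your handling of (ii) and (iii) (via part (i) and Lemma~\ref{lemrk}) matches the paper's, with the added care of verifying $\Assh_R(M\otimes_RN)\ne\emptyset$ and spelling out the parenthetical about positive rank.
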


\begin{proof} (i) For $\fp \in \Assh_R(M\otimes_RN)$, we see $\fp \in \Supp_R(M\otimes_RN)\subseteq \Supp_R(M)$ and $\dim(R/\fp)=\dim_R(M)$. Therefore $\fp \in \Min_R(M) \subseteq \Ass_R(M)$ and hence $\fp \in \Assh_R(M)$. This shows the inclusion that $\Assh_R(M\otimes_RN) \subseteq \Assh_R(M) \cap \Supp_R(N)$. On the other hand, if $\fq \in \Assh_R(M) \cap \Supp_R(N)$, then $\fq \in \Supp_R(M\otimes_RN)$ and $\dim(R/\fq)=\dim_R(M)=\dim_R(M\otimes_RN)$ so that $\fq \in \Assh_R(M\otimes_RN)$. Consequently, $\Assh_R(M\otimes_RN) =\Assh_R(M) \cap \Supp_R(N)$. 

(ii) The claims follow by part (i) and Lemma \ref{lemrk}.

(iii) This follows from part (i) immediately.
\end{proof}

\section{Main theorems}

In this section, we prove our main theorems. We begin with some preparatory results that will be instrumental in the proofs.

\begin{lem}\label{lem:J}
Let $R$ be a local ring and let $M$ be an $R$-module. Set $J = \Ann_R(M)$. For any $R$-module $N$, the following statements are equivalent: 
\begin{enumerate}[\rm(i)]
\item $M^{\oplus \mu_R(N)} \cong M \otimes_R N$.
\item All entries of the minimal presentation matrix of $N$ are in $J$ (that is, $\Fitt_{\mu_R(N)-1}(N) \subseteq J$).
\item $L^{\oplus \mu_R(N)} \cong L \otimes_R N$ for all $R$-modules $L$ such that $\Ann_R(L) \supseteq J$.
\item $(R/J)^{\oplus \mu_R(N)} \cong N/JN$.
\end{enumerate}
\end{lem}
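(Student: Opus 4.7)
My plan is to verify the chain of implications $\text{(iii)}\Rightarrow\text{(i)}\Rightarrow\text{(ii)}\Rightarrow\text{(iii)}$ together with the equivalence $\text{(ii)}\Leftrightarrow\text{(iv)}$. Set $n=\mu_R(N)$ and fix a minimal free presentation
$$R^{\oplus m}\xrightarrow{A} R^{\oplus n}\to N\to 0,$$
so that $\Fitt_{n-1}(N)$ is precisely the ideal generated by the entries of $A$, each of which lies in $\fm$. The implication $\text{(iii)}\Rightarrow\text{(i)}$ is immediate on taking $L=M$.

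The implication $\text{(ii)}\Rightarrow\text{(iii)}$ would proceed by tensoring the presentation with any $R$-module $L$ satisfying $\Ann_R(L)\supseteq J$: since each entry of $A$ annihilates $L$, the induced map $L^{\oplus m}\to L^{\oplus n}$ is identically zero and hence $L\otimes_R N\cong L^{\oplus n}$.

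The crux is $\text{(i)}\Rightarrow\text{(ii)}$. Tensoring the presentation with $M$ yields an exact sequence $M^{\oplus m}\xrightarrow{A\otimes M} M^{\oplus n}\to M\otimes_R N\to 0$. Writing $K$ for the image of $A\otimes M$, assumption (i) supplies an isomorphism $M^{\oplus n}/K\cong M^{\oplus n}$, and composing the canonical surjection with this isomorphism produces a surjective endomorphism of $M^{\oplus n}$. Here I would invoke the classical fact that every surjective endomorphism of a finitely generated module over a commutative ring is injective (a consequence of Nakayama's lemma via the determinant trick). This forces $K=0$, hence $A\otimes M=0$; evaluating on the generators $z\cdot e_j$ of $M^{\oplus m}$ (with $z\in M$ and $e_j$ a standard basis vector) yields $a_{ij}z=0$ for every $z\in M$ and every entry $a_{ij}$ of $A$, i.e., $a_{ij}\in\Ann_R(M)=J$.

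Finally, for $\text{(ii)}\Leftrightarrow\text{(iv)}$, I would reduce the presentation modulo $J$ to obtain a minimal presentation
$$(R/J)^{\oplus m}\xrightarrow{\overline A}(R/J)^{\oplus n}\to N/JN\to 0$$
of $N/JN$ over $R/J$. Condition (ii) is precisely the assertion $\overline A=0$, and this is equivalent to (iv): the forward direction is immediate, while the reverse follows by applying the surjective-endomorphism principle once more, now to the surjection $(R/J)^{\oplus n}\twoheadrightarrow N/JN\cong(R/J)^{\oplus n}$. The entire argument hinges on this single nontrivial input; beyond it, I do not anticipate any serious obstacle, only the bookkeeping of keeping track of the presentation matrix.
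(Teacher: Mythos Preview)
Your proof is correct and follows essentially the same strategy as the paper: tensor the minimal presentation of $N$ with the relevant module and read off that the induced map vanishes. The paper runs the cycle $\text{(i)}\Rightarrow\text{(ii)}\Rightarrow\text{(iii)}\Rightarrow\text{(iv)}\Rightarrow\text{(i)}$, closing it with the slick observation $M\otimes_R N\cong M\otimes_R(R/J)\otimes_R N\cong M\otimes_R(N/JN)$, whereas you close via $\text{(ii)}\Leftrightarrow\text{(iv)}$ using the surjective-endomorphism trick a second time; both routes are equally short, and you make explicit the one nontrivial input (surjective endomorphisms of finitely generated modules are injective) that the paper invokes tacitly.
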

 
\begin{proof}

Let $N$ be a $R$-module with a minimal presentation $R^{\oplus b_1} \xrightarrow{A} R^{\oplus \mu_R(N)} \to N \to 0$. Then
\begin{align*}
M^{\oplus \mu_R(N)} \cong M \otimes_R N & \implies M^{\oplus b_1} \xrightarrow{A} M^{\oplus \mu_R(N)} \xrightarrow{\cong} M\otimes_R N \to 0 \text{ is exact}\\
& \implies A \equiv 0 \mod J \\
& \implies L^{\oplus b_1} \xrightarrow{0} L^{\oplus \mu_R(N)} \xrightarrow{} L\otimes_R N \to 0 \text{ is exact for all $L$ with $\Ann(L) \supseteq J$}\\
& \implies L^{\oplus \mu_R(N)} \cong L \otimes_R N \text{ for all $L$ with $\Ann(L) \supseteq J$} \\
& \implies (R/J)^{\oplus \mu_R(N)} \cong (R/J) \otimes_R N \cong N/JN \\
& \implies M^{\oplus \mu_R(N)} \cong M \otimes_R (R/J)^{\oplus \mu_R(N)} \cong M \otimes_R (R/J) \otimes_R N \cong M \otimes_R N. \qedhere
\end{align*}
\end{proof}
 
\begin{prop}\label{High Bridge Lemma} Let $R$ be a local ring and let $M \neq 0$, $N$ be $R$-modules such that $M \otimes_R N \cong M^{\oplus \mu_R(N)}$. Set $J = \Ann_R(M)$. Then
\begin{enumerate}[\rm(i)]
\item If $M$ is faithful, then $N$ is free over $R$.
\item $\Tor_1^R(L, N) \cong L \otimes_R \Omega^1_R(N) \cong L \otimes_R \Tor_1^R(R/J, N)$ for all $R$-modules $L$ with $\Ann_R(L) \supseteq J$.
\item $N$ is free over $R$ if and only if $\Tor_1^R(L, N) = 0$ for some $R$-module $L \neq 0$ with $\Ann_R(L) \supseteq J$ (for example, $\Tor_1^R(M, N) = 0$ or $\Tor_1^R(R/J, N) = 0$).
\item Assume $\Supp(M) = \Spec(R)$. Then $N$ is free over $R$ if and only if $\Tor_1^R(M,N)$ is torsion.
\end{enumerate}
\end{prop}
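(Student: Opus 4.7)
The plan is to prove each part in turn, leaning heavily on Lemma \ref{lem:J}. Part (i) is immediate from Lemma \ref{lem:J}(iv): if $M$ is faithful then $J=0$, so the isomorphism $N/JN \cong (R/J)^{\oplus \mu_R(N)}$ reads $N \cong R^{\oplus \mu_R(N)}$.

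For part (ii), I would begin with a minimal presentation $R^{\oplus b_1} \xrightarrow{A} R^{\oplus \mu_R(N)} \to N \to 0$, which by Lemma \ref{lem:J}(ii) has $A$ with all entries in $J$. Setting $\Omega = \Omega^1_R(N) = \operatorname{im}(A)$, the short exact sequence $0 \to \Omega \to R^{\oplus \mu_R(N)} \to N \to 0$ tensored with $L$ produces the four-term exact sequence $0 \to \Tor_1^R(L,N) \to L \otimes_R \Omega \to L^{\oplus \mu_R(N)} \to L \otimes_R N \to 0$. The surjection $L^{\oplus b_1} \twoheadrightarrow L \otimes_R \Omega$ composed with the inclusion into $L^{\oplus \mu_R(N)}$ equals $\overline{A}$, which vanishes because $A$ has entries in $J$ and $JL=0$; this forces $L \otimes_R \Omega \to L^{\oplus \mu_R(N)}$ to be zero, yielding $\Tor_1^R(L,N) \cong L \otimes_R \Omega$. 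Specializing to $L = R/J$ gives $\Tor_1^R(R/J,N) \cong \Omega/J\Omega$; then since $JL = 0$, $L \otimes_R \Omega \cong L \otimes_{R/J} (\Omega/J\Omega) \cong L \otimes_R \Tor_1^R(R/J,N)$, giving the second isomorphism.

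Part (iii) $(\Rightarrow)$ uses $L = R/J$ (nonzero since $J \subsetneq R$). For $(\Leftarrow)$, given a nonzero $L$ with $\Ann_R(L) \supseteq J$ and $\Tor_1^R(L,N)=0$, part (ii) yields $L \otimes_R \Tor_1^R(R/J,N) = 0$. Since $L$ is a nonzero finitely generated $R/J$-module and $R/J$ is local, Nakayama supplies a surjection $L \twoheadrightarrow k$, so $k \otimes_R \Tor_1^R(R/J,N) = 0$, and a further Nakayama forces $\Tor_1^R(R/J,N) = \Omega^1_R(N)/J\Omega^1_R(N) = 0$. Thus $J \Omega^1_R(N) = \Omega^1_R(N)$, and since $J \subseteq \fm$, a final Nakayama gives $\Omega^1_R(N) = 0$, so $N$ is free.

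Part (iv) is the main content. The only-if direction is clear. For the converse, part (ii) gives $\Tor_1^R(M,N) \cong M \otimes_R \Omega^1_R(N)$, which is torsion by hypothesis, so it vanishes after inverting the nonzerodivisors of $R$; equivalently, $(M \otimes_R \Omega^1_R(N))_\fp = 0$ for every $\fp \in \Ass_R(R)$ (because any nonzerodivisor of $R$ avoids $\fp$ and becomes a unit in $R_\fp$). Since $\Supp_R(M) = \Spec(R)$ forces $M_\fp \neq 0$, a Nakayama argument over $R_\fp$ (tensor with $k(\fp)$) gives $\Omega^1_R(N)_\fp = 0$ at every $\fp \in \Ass_R(R)$. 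The crucial structural observation is that $\Omega^1_R(N)$ embeds in $R^{\oplus \mu_R(N)}$, so $\Ass_R(\Omega^1_R(N)) \subseteq \Ass_R(R)$; combined with the vanishing just established, this forces $\Ass_R(\Omega^1_R(N)) = \emptyset$, hence $\Omega^1_R(N) = 0$ and $N$ is free. The delicate step I anticipate is part (iv): one must correctly translate torsion into vanishing at the associated primes of $R$ and then leverage the embedding $\Omega^1_R(N) \hookrightarrow R^{\oplus \mu_R(N)}$ to control its associated primes globally. Parts (i)--(iii) are largely formal manipulations built on Lemma \ref{lem:J} and iterated Nakayama.
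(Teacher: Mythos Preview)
Your proof is correct and follows essentially the same approach as the paper's. The only minor differences are cosmetic: in (ii) you argue the map $L\otimes_R\Omega \to L^{\oplus\mu_R(N)}$ is zero by tracking the entries of $A$, while the paper instead invokes Lemma~\ref{lem:J}(iii) to see $L^{\oplus\mu_R(N)}\xrightarrow{\cong}L\otimes_RN$ directly; in (iii) you route through $\Tor_1^R(R/J,N)$ and three Nakayama steps, whereas the paper goes straight from $L\otimes_R\Omega^1_R(N)=0$ with $L\ne 0$ to $\Omega^1_R(N)=0$; and in (iv) you make explicit the step $\Ass_R(\Omega^1_R(N))\subseteq\Ass_R(R)$ that the paper leaves implicit.
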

 
\begin{proof}
(i) By Lemma~\ref{lem:J}, $N/JN$ is free over $R/J$. If $J=0$, then $N$ is free over $R$.
 
(ii) Let $L$ be any $R$-module with $\Ann(L) \supseteq J$. Applying $L \otimes_R -$ to the short exact sequence $0 \to \Omega^1_R(N) \to R^{\oplus \mu_R(N)} \to N \to 0$ and observing Lemma~\ref{lem:J}, we get an exact sequence
\[
0 \to \Tor_1^R(L, N) \to L \otimes_R \Omega^1_R(N) \to L^{\oplus \mu_R(N)} \xrightarrow{\cong} L \otimes_R N \to 0,
\]
which implies $\Tor_1^R(L, N) \cong L \otimes_R \Omega^1_R(N)$, which then yields
\[\Tor_1^R(L, N) \cong L \otimes_R \Omega^1_R(N) \cong L \otimes_R [(R/J) \otimes_R \Omega^1_R(N)] \cong L \otimes_R \Tor_1^R(R/J, N).
\]
 
(iii) If $N$ is free over $R$, then $\Tor_1^R(L, N) = 0$ for all $R$-module $L$. Conversely, if $\Tor_1^R(L, N) = 0$ for some $R$-module $L \neq 0$ with $\Ann_R(L) \supseteq J$, then $L \otimes_R \Omega^1_R(N) = 0$ by part (2) above, which  implies $\Omega^1_R(N) = 0$, so $N \cong R^{\oplus \mu_R(N)}$.

(iv) If $N$ is free, then clearly $\Tor_1^R(M,N)$ is torsion. Conversely, assume $\Tor_1^R(M,N)$ is torsion. By (ii) above, $M \otimes_R \Omega^1_R(N)$ is torsion, meaning that $M_\fp \otimes_R \Omega^1_R(N)_\fp = 0$ for all $\fp \in \Ass(R)$. In light of $\Supp(M) = \Spec(R)$, we conclude $\Omega^1_R(N)_\fp = 0$ for all $\fp \in \Ass(R)$. This implies $\Omega^1_R(N) = 0$.
\end{proof}

\begin{lem} \label{lem:c} Let $M \neq 0$ and $N$ be $R$-modules. Assume that $M/IM$ and $N/IN$ are both free over $R/I$, $\dim_R(M)= \dim_R(M \otimes_R N)$, $\e_R(I,M) \le c \cdot \length_R(M/IM)$, and $M\otimes_R N$ is Cohen-Macaulay. Let $J = \Ann_R(M)$. Then the following hold:
\begin{enumerate}[\rm(i)]
\item  The module $M \otimes_R N$ is $c$-Ulrich with respect to $I$ such that
$$\e_R(I,M \otimes_R N) \leq  \e_R(I,M) \cdot \mu_R(N) \le c \cdot \length_R\big((M \otimes_R N)/ I (M \otimes_R N)\big)  \leq c \cdot \e_R(I,M \otimes_R N).$$  In particular, if $\e_R(I,M)= \length_R(M/IM)$, then $$\e_R(I,M \otimes_R N) =  \e_R(I,M) \cdot \mu_R(N)=\length_R\big((M \otimes_R N)/ I (M \otimes_R N)\big),$$
so that $M \otimes_R N$ is Ulrich with respect to $I$.
\item Assume that either $M$ or $N/JN$ has rank over $R/J$ on $\Assh_{R/J}(R/J)$. Then we have the inequality $\mu_R(N) \le c \cdot \dfrac{\e_R(I,\, N/JN)}{\e_R(I,\, R/J)}$. In particular, if $\e_R(I,M)= \length_R(M/IM)$, then we have $\mu_R(N) = \dfrac{\e_R(I,\, N/JN)}{\e_R(I,\, R/J)}$.
\end{enumerate}
\end{lem}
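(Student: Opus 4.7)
My plan is to prove part (i) by a length--multiplicity chain read off from a minimal presentation of $N$, then derive part (ii) from (i) using the associativity formula for Hilbert--Samuel multiplicity over the quotient $R/J$.

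For (i), I will tensor a minimal surjection $R^{\oplus \mu_R(N)} \twoheadrightarrow N$ with $M$ to obtain a surjection $M^{\oplus \mu_R(N)} \twoheadrightarrow M \otimes_R N$; the hypothesis $\dim_R(M) = \dim_R(M \otimes_R N)$ makes source and target equidimensional, so additivity of multiplicity along the induced short exact sequence gives $\e_R(I, M \otimes_R N) \le \mu_R(N) \cdot \e_R(I, M)$. Combined with the bound $\e_R(I,M) \le c \cdot \length_R(M/IM)$ and the length identities $\length_R(M/IM) = \mu_R(M) \length_R(R/I)$ and $\length_R\big((M \otimes_R N)/I(M \otimes_R N)\big) = \mu_R(M) \mu_R(N) \length_R(R/I)$ from Remark~\ref{prelim}, this produces the middle two inequalities in the claimed chain; the remaining outer inequality $\length_R \le \e_R$ is Remark~\ref{bound} applied to the Cohen--Macaulay module $M \otimes_R N$. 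The ``in particular'' assertion is obtained by collapsing the chain to equalities when $\e_R(I, M) = \length_R(M/IM)$.

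For (ii), the crucial point is that $J \cdot (M \otimes_R N) = 0$, so $M \otimes_R N \cong M \otimes_{R/J} (N/JN)$ is an $R/J$-module; faithfulness of $M$ over $R/J$ forces $\dim_R(M) = \dim(R/J)$, and together with the dimension hypothesis this forces both $r_M, r_N > 0$ (otherwise $(M \otimes_R N)_\fq$ would vanish on all of $\Assh_{R/J}(R/J)$, contradicting the equality of dimensions). I then apply the associativity formula over $R/J$,
\[
\e_R(I, X) = \sum_{\fq \in \Assh_{R/J}(R/J)} \length_{(R/J)_\fq}(X_\fq) \cdot \e_R(I, R/\fq),
\]
to $X = M$, $X = N/JN$, and $X = M \otimes_R N$, using the local isomorphism $(M \otimes_R N)_\fq \cong (N/JN)_\fq^{\oplus r_M}$ in case (A), or symmetrically $(M \otimes_R N)_\fq \cong M_\fq^{\oplus r_N}$ in case (B). In either case, factoring the relevant rank out of the sum produces the identity
\[
\e_R(I, M \otimes_R N) \cdot \e_R(I, R/J) = \e_R(I, M) \cdot \e_R(I, N/JN).
\]
Combining this with the inequality $\mu_R(N) \cdot \length_R(M/IM) \le \e_R(I, M \otimes_R N)$ from the proof of (i) and the hypothesis $\e_R(I, M) \le c \cdot \length_R(M/IM)$, then canceling the nonzero factor $\length_R(M/IM)$, yields the stated inequality, with equality forced when $\e_R(I, M) = \length_R(M/IM)$.

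The main technical hurdle will be the associativity bookkeeping: I must verify that the single sum over $\Assh_{R/J}(R/J)$ correctly computes the multiplicities of $M$, $N/JN$, and $M \otimes_R N$. This amounts to checking that $\Assh_{R/J}(M) = \Assh_{R/J}(R/J)$ from faithfulness of $M$ over $R/J$, that summands indexed by primes outside the respective supports contribute zero, and --- most delicately --- that $\e_R(I, N/JN)$ is computed at the same dimension $\dim(R/J)$ as $\e_R(I, R/J)$; the positivity of $r_M$ and $r_N$ established above is exactly what guarantees this.
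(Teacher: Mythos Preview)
Your proposal is correct and follows essentially the same approach as the paper. Part (i) is identical to the paper's argument. For part (ii), the paper obtains the product identity $\e_R(I,M\otimes_R N)\,\e_R(I,R/J)=\e_R(I,M)\,\e_R(I,N/JN)$ by a direct citation of the associativity formula \cite[4.7.8]{BH93}, while you unwind it explicitly over $\Assh_{R/J}(R/J)$; and the paper combines it with the inequality $\e_R(I,M)\,\mu_R(N)\le c\cdot\e_R(I,M\otimes_R N)$ from the chain in (i), whereas you use the equivalent pair $\mu_R(N)\,\ell_R(M/IM)\le \e_R(I,M\otimes_R N)$ and $\e_R(I,M)\le c\,\ell_R(M/IM)$---these are cosmetic differences.
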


\begin{proof} (i) There is a surjection $R^{\oplus \mu_R(N)} \twoheadrightarrow N$. We have the following (in)equalities:
\[
\begin{array}{rclclc} 
\e_R(I,M \otimes_R N) &\le& \displaystyle \e_R(I,M^{\oplus \mu_R(N)}) && \\
&=& \displaystyle \e_R(I,M) \cdot \mu_R(N) && \\
&\le& \displaystyle c \cdot \len_R(M/IM)  \cdot \mu_R(N) && \\
&=& \displaystyle c \cdot \mu_R(M) \cdot \len_R(R/I)  \cdot \mu_R(N) && \\
&=& \displaystyle c \cdot \mu_R(M \otimes_R N) \cdot \len_R(R/I) && \\
&=& \displaystyle c \cdot \len_R\big((M \otimes_R N)/ I (M \otimes_R N)\big) &\le& \displaystyle  c \cdot \e_R(I,M \otimes_R N). \\
\end{array}\]
In the above, the first and the last inequalities follow from the surjection $M^{\oplus \mu_R(N)} \onto M \otimes_R N$ and  Remark~\ref{bound} respectively, while the second, the third and the fourth equalities are covered in Remark~\ref{prelim}(ii). In summary, we have
$$\e_R(I,M \otimes_R N) \leq  \e_R(I,M) \cdot \mu_R(N) \le c \cdot \length_R\big((M \otimes_R N)/ I (M \otimes_R N)\big)  \leq c \cdot \e_R(I,M \otimes_R N).$$
Therefore $M \otimes_R N$ is $c$-Ulrich with respect to $I$. When $c = 1$, the following equalities hold:
\[
\e_R(I,M \otimes_R N) =  \e_R(I,M) \cdot \mu_R(N)=\length_R\big((M \otimes_R N)/ I (M \otimes_R N)\big). 
\]

(ii) By (i) above, we see $\mu_R(N) \le c \cdot \dfrac{\e_R(I,\,M \otimes_R N)}{\e_R(I,\,M)}$. Note that $M \otimes_R N \cong M \otimes_R (N/JN)$. Since either $M$ or $N/JN$ has rank over $R/J$, we use the associativity formula \cite[4.7.8]{BH93} to obtain 
\[\e_R(I,M \otimes_R N)=\e_R(I,M \otimes_R (N/JN))=\dfrac{\e_R(I,M)\e_R(I,N/JN)}{\e_R(I,R/J)}.\] Consequently, we have that $\dfrac{\e_R(I,\,M \otimes_R N)}{\e_R(I,\,M)}=\dfrac{\e_R(I,N/JN)}{\e_R(I,R/J)}$ and the claim follows.
\end{proof}

We are now ready to prove our first main theorem which encompasses Theorem \ref{intro:mainthm}. 

\begin{thm} \label{mainthm:alpha} Let $M \neq 0$ and $N$ be $R$-modules. Assume that $M/IM$ and $N/IN$ are both free over $R/I$, $\dim_R(M)=\dim_R(M\otimes_RN)$, $M\otimes_RN$ is Cohen-Macaulay, and $\e_R(I,M) \le c \cdot \length_R(M/IM)$ for some real number $c$ (e.g., $M$ is $c$-Ulrich with respect to $I$). Then $N$ is free over $R$ if at least one of the following holds:
\begin{enumerate}[\rm(i)]
\item $N$ has rank, $\Tor_1^R(M,N)_{\fp}=0$ for \emph{some} $\fp \in \Assh_R(M)$, and $c < 1+ \dfrac{1}{\e_R(I,\,M \otimes_R N)}$.
\item $N$ is free on $\Assh(M)$, $N$ has rank $r$ (which must be positive), and $c < 1+ \dfrac {1}{r}$. 
\item $M$ is unmixed, $c < 1+ \dfrac{1}{\e_R(I,\,M \otimes_R N)}$, and at least one of the following holds:
\begin{enumerate}[\rm(a)]
\item $M$ is faithful.
\item $\Tor_1^R(M,N)=0$.
\item $\Tor_1^R(M,N)$ is torsion and $\Supp_R(M)=\Spec(R)$.
\end{enumerate}
\item $M$ has positive rank over $R$, $R$ is unmixed, and $c < 1+ \dfrac{1}{\e_R(I,\,N)}$.
\item $M$ has positive rank over $R$, $R$ is unmixed, $\e_R(I, N) = r \cdot \e(R)$ for some $r \in \NN$, and $c < 1+ \dfrac{1}{r}$. 
\end{enumerate}
\end{thm}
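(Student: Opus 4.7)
The strategy is to analyze the chain of inequalities from Lemma~\ref{lem:c}(i),
\[
\e_R(I, M \otimes_R N) \le \e_R(I, M)\mu_R(N) \le c \cdot \ell_R\big((M\otimes_R N)/I(M\otimes_R N)\big) \le c \cdot \e_R(I, M \otimes_R N),
\]
and exploit the strict bound on $c$ together with the integrality of Hilbert--Samuel multiplicities to collapse the chain into equalities. The resulting equalities, combined with the associativity formula for multiplicity at primes of $\Assh_R(M)$, yield either the global isomorphism $M \otimes_R N \cong M^{\oplus \mu_R(N)}$ or the identity $\mu_R(N) = \rank_R(N)$, from which the conclusion follows via the High Bridge Lemma (Proposition~\ref{High Bridge Lemma}) or a kernel-associated-primes argument.

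For case (iii), the bound on $c$ forces $\e_R(I, M \otimes_R N) = \e_R(I, M)\mu_R(N)$. Let $K$ be the kernel of the natural surjection $M^{\oplus \mu_R(N)} \twoheadrightarrow M \otimes_R N$. We show $K = 0$ using the unmixedness of $M$: if $\dim_R K = \dim_R M$, additivity of multiplicity across the sequence gives $\e_R(I, K) = 0$, a contradiction; if $\dim_R K < \dim_R M$ and $K \neq 0$, then $\Ass_R K \subseteq \Ass_R M = \Assh_R M$ forces $\dim_R K = \dim_R M$, again a contradiction. Thus $M \otimes_R N \cong M^{\oplus \mu_R(N)}$, and subcases (a), (b), (c) correspond respectively to parts (i), (iii), (iv) of Proposition~\ref{High Bridge Lemma}, each yielding that $N$ is free.

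For cases (i), (ii), (iv), (v), a uniform argument localizes at primes of $\Assh_R(M)$ to obtain isomorphisms $N_\fp \cong R_\fp^{\oplus \mu_R(N)}$ or closely analogous data. In case (i), the forced multiplicity equality together with associativity yields $(M \otimes_R N)_\fp \cong M_\fp^{\oplus \mu_R(N)}$ and $\mu_{R_\fp}(N_\fp) = \mu_R(N)$ at every $\fp \in \Assh_R(M)$; Proposition~\ref{High Bridge Lemma}(iii) applied at $R_\fp$, using $\Tor_1^R(M, N)_\fp = 0$, then gives that $N_\fp$ is free of rank $\mu_R(N)$, and rank comparison at a minimal prime $\fq \subseteq \fp$ forces $\mu_R(N) = r$. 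In case (ii), the freeness of $N$ on $\Assh_R(M)$ directly yields $\e_R(I, M \otimes_R N) = r \e_R(I, M)$ by associativity, and the chain combined with $c < 1 + 1/r$ forces $\mu_R(N) = r$. In cases (iv) and (v), the unmixedness of $R$ together with $\rank_R(M) = s > 0$ gives $\Assh_R(M) = \Ass_R(R)$ and $\e_R(I, M \otimes_R N) = s \e_R(I, N)$; the bound on $c$ (and, in (v), the additional hypothesis $\e_R(I, N) = r \e(R)$) then forces $N_\fp \cong R_\fp^{\oplus \mu_R(N)}$ for every $\fp \in \Ass_R(R)$, and $\mu_R(N) = r$ in case (v).

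Finally, in each of cases (i), (ii), (iv), (v), freeness of $N$ is obtained from the short exact sequence $0 \to L \to R^{\oplus \mu_R(N)} \to N \to 0$: the local freeness established above gives $L_\fp = 0$ for all $\fp \in \Ass_R(R)$, while the general containment $\Ass_R(L) \subseteq \Ass_R(R^{\oplus \mu_R(N)}) = \Ass_R(R)$ forces $\Ass_R(L) = \emptyset$, hence $L = 0$. The principal obstacle is coordinating the multiplicity-forcing arguments so that the bound on $c$ in each case produces exactly the required equality; in particular, in case (i), the key subtlety is recognizing that the multiplicity equality compels $\mu_{R_\fp}(N_\fp) = \mu_R(N)$, which enables a local application of the High Bridge Lemma that, via rank comparison at a minimal prime, propagates to the global conclusion.
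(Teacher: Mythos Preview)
Your argument is correct, and for case (iii) it is essentially identical to the paper's. For cases (i), (ii), (iv), (v), however, your route differs genuinely from the paper's. The paper handles each of these by contradiction: assuming $N$ is not free, it shows that $\ker\big(M^{\oplus \mu_R(N)} \twoheadrightarrow M \otimes_R N\big)$ (or, in (iv)--(v), the syzygy $\Omega^1_R(N)$ itself) has dimension equal to $\dim_R M$, and then the bound $\e_R(I,\ker f) \le (c-1)\,\e_R(I, M\otimes_R N)$ together with the strict hypothesis on $c$ immediately yields the numerical contradiction $\e_R(I,\ker f)<1$ (or $<\e_R(I,M)$, or $<\e_R(I,R)$). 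In (iv) the paper even short-circuits by reducing to case (iii)(a) with $M=R$. By contrast, you first collapse the chain of Lemma~\ref{lem:c} to an \emph{equality} $\e_R(I,M)\mu_R(N)=\e_R(I,M\otimes_R N)$ or $\mu_R(N)=r$, extract local freeness of $N$ on $\Ass(R)$, and then kill $\Omega^1_R(N)$ via $\Ass_R(\Omega^1_R(N))\subseteq\Ass_R(R)$.

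Your approach buys a uniform endgame across four cases, at the cost of one step that your outline leaves implicit: in (iv) and (v), the assertion that the bound on $c$ ``forces $N_\fp \cong R_\fp^{\oplus \mu_R(N)}$ for every $\fp \in \Ass_R(R)$'' requires (a) showing the difference $\e_R(I,M)\mu_R(N)-\e_R(I,M\otimes_R N)$ is a nonnegative multiple of $s=\rank(M)$ strictly less than $s$, hence zero, so that $\ker(f)_\fp=0$; and then (b) feeding the resulting isomorphism $M_\fp\otimes_{R_\fp} N_\fp \cong M_\fp^{\oplus \mu_R(N)}$, together with $M_\fp \cong R_\fp^{\oplus s}$ faithful and the length identity $\mu_{R_\fp}(N_\fp)=\mu_R(N)$, into Proposition~\ref{High Bridge Lemma}(i) applied over $R_\fp$. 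This all works, but it is more circuitous than the paper's direct contradiction on $\e_R(I,\Omega^1_R(N))$.
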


\begin{proof} The exact sequence $0 \to \Omega^1_R(N) \to R^{\oplus \mu_R(N)} \to N \to 0$ induces an exact sequence
\[\Tor_1^R(M, N) \to M \otimes_R \Omega^1_R(N) \to M^{\oplus \mu_R(N)}  \xrightarrow{f} M\otimes_RN \to 0,
\]
which then yields following exact sequences
\[
\Tor_1^R(M, N) \to M \otimes_R \Omega^1_R(N) \to \ker(f) \to 0
\quad \text{and}\quad
0 \to \ker(f) \to M^{\oplus \mu_R(N)}  \xrightarrow{f} M\otimes_RN \to 0. 
\]
By Lemma~\ref{lem:c}, we have $\e_R(I, M^{\oplus \mu_R(N)}) = \e_R(I,M) \cdot \mu_R(N) \le c \cdot \e_R(I,M \otimes_R N)$.

(i) Suppose that $N$ is not free over $R$. Then $\rank(\Omega^1_R(N)) = \mu_R(N) - \rank(N) > 0$, which implies $\Supp_R(\Omega^1_R(N)) = \Spec(R)$. Localizing at the prime ideal $\fp \in \Assh(M)$ such that $\Tor_1^R(M,N)_{\fp}=0$, we get the exact sequence
$0 \to M_\fp \otimes_{R_{\p}} \Omega^1_R(N)_\fp \xrightarrow{\cong} \ker(f)_\fp \to 0$,
so $\ker(f)_\fp \cong M_\fp \otimes_{R_{\p}} \Omega^1_R(N)_\fp \neq 0$. Thus $\fp \in \Supp(\ker(f))$, hence $\dim_R(\ker(f)) = \dim_R(M)$. Consequently, 
\[
\e_R(I, \ker(f)) = \e_R(I,M^{\oplus \mu_R(N)}) - \e_R(I,M \otimes_R N) 
\le (c-1) \cdot \e_R(I,M \otimes_R N) < 1,
\]
which contradicts the fact that $\e_R(I, \ker(f)) \ge 1$.

(ii) In this case, we actually have $\Tor_1^R(M,N)_{\fp}=0$ for all $\fp \in \Assh_R(M)$. Suppose that $N$ is not free, so $\mu_R(N) > r$. As in the proof of (i) above, we see $\dim_R(\ker(f)) = \dim_R(M)$. By Lemma~\ref{lemrk}, $N$ has positive rank $r$ on $\Assh(M)$. Thus, 
\begin{align*}
\e_R(I, \ker(f)) &= \e_R(I,M^{\oplus \mu_R(N)}) - \e_R(I,M \otimes_R N) \\
&\le (c-1) \cdot \e_R(I,M \otimes_R N) 
< \frac{\e_R(I,\,M \otimes_R N)}{r} = \e_R(I, M).
\end{align*}
On the other hand, as $\mu_R(N) - r \ge 1$, we have
\begin{align*}
\e_R(I, \ker(f)) &= \e_R(I,M^{\oplus \mu_R(N)}) - \e_R(I,M \otimes_R N) \\
&= \e_R(I,M) \cdot \mu_R(N) - \e_R(I,M) \cdot r = \e_R(I, M) \cdot (\mu_R(N) - r) \ge \e_R(I, M),
\end{align*}
which is a contradiction.

(iii) In light of Proposition~\ref{High Bridge Lemma} (i), (iii), and (iv), it suffices to verify that $f: M^{\oplus \mu_R(N)} \onto M \otimes_R N$ is an isomorphism. Suppose, on the contrary that $\ker(f) \neq 0$. Since $M$ is unmixed, we must have $\dim(\ker(f)) = \dim(M)$. So, as in (i) above, we see $\e_R(I, \ker(f)) < 1$, which is a contradiction.

(iv) Since $M$ has positive rank over $R$, we see $\e_R(I, M) = \rank(M) \cdot \e_R(I,R)$. The assumption $\dim(M\otimes_R N) = \dim(M) = \dim(R)$ implies $\dim(N) = \dim(R)$. Thus $\e(I, M \otimes_R N) = \rank(M) \cdot \e_R(I,N)$. Now, considering the assumption $\e_R(I,M) \cdot \mu_R(N) \le c \cdot \e_R(I,M \otimes_R N)$, we find that $\e_R(I,R) \cdot \mu_R(N) \le c \cdot \e_R(I,N)$. As $R$ is unmixed, case (iii)(a) above applies to $M=R$, which implies that $N$ is free over $R$.

(v) Consider the syzygy exact sequence $0 \to \Omega^1_R(N) \to R^{\oplus \mu_R(N)} \to N \to 0$. Suppose that $N$ is not free over $R$, so that $\Omega^1_R(N) \neq 0$. As $R$ is unmixed, we see $\dim(\Omega^1_R(N)) = \dim(R)$. As in the proof of (iv) above, we see $\e_R(I,R) \cdot \mu_R(N) \le c \cdot \e_R(I,N)$, which then yields
\begin{align*}
\e_R(I, \Omega^1_R(N)) &= \e_R(I,R^{\oplus \mu_R(N)}) - \e_R(I,N) \\
& \le (c-1) \cdot \e_R(I,N)
= (c-1) \cdot r \cdot \e_R(I,R) < \e_R(I, R).
\end{align*}
On the other hand, we have
\begin{align*}
\e_R(I, \Omega^1_R(N)) &= \e_R(I,R^{\oplus \mu_R(N)}) - \e_R(I, N) \\
&= \e_R(I,R) \cdot \mu_R(N) - \e_R(I,R) \cdot r = \e_R(I, R) \cdot (\mu_R(N) - r),
\end{align*}
which implies that, as a positive integer, $\e_R(I, \Omega^1_R(N))$ is a multiple of $\e_R(I,R)$. This readily forces the inequality $\e_R(I, \Omega^1_R(N)) \ge \e_R(I, R)$, which is a contradiction.
\end{proof}

\begin{rmk}\label{rmk:alpha}
Theorem \ref{mainthm:alpha} can be stated in even more generality. Indeed, let $M$ and $N$ be as in Theorem~\ref{mainthm:alpha}. Denote 
\[\al(I, M, N) = \max\left\{\gcd\big(\e_R(I,\, M^{\oplus \mu_R(N)}),\, \e_R(I,\,M \otimes_R N)\big),\;\min\big\{\e_R(I, R/\fp) \mid \fp \in \Assh(M)\big\}\right\}.
\] 
From the proof of Theorem~\ref{mainthm:alpha} above, it is not hard to conclude that, in each of the cases listed in Theorem~\ref{mainthm:alpha} above, if $c < 1+ \frac{\al(I,\,M,\,N)}{\e_R(I,\,M \otimes_R N)}$, then $N$ is free. 
\end{rmk}

Next is our second main Theorem. It follows as a consequence of Theorem \ref{mainthm:alpha} and establishes Theorem \ref{mainthmintro} advertised in the introduction. 

\begin{thm} \label{mainthm} Let $M \neq 0$ and $N$ be $R$-modules. Assume that $M/IM$ and $N/IN$ are both free over $R/I$, $M\otimes_RN$ is Cohen-Macaulay, $\dim_R(M)=\dim_R(M\otimes_RN)$, and $\e_R(I,M)=\length_R(M/IM)$ (e.g., $M$ is Ulrich with respect to $I$). Then $N$ is free if at least one of the following holds:
\begin{enumerate}[\rm(i)]
\item $N$ has rank and  $\Tor_1^R(M,N)_{\fp}=0$ for \emph{some} $\fp \in \Assh_R(M)$.
\item $M$ is unmixed and at least one of the following holds:
\begin{enumerate}[\rm(a)]
\item $M$ is faithful.
\item $\Tor_1^R(M,N)=0$.
\item $\Tor_1^R(M,N)$ is torsion and $\Supp_R(M)=\Spec(R)$.
\end{enumerate}
\item $M$ has positive rank, and $R$ is unmixed.
\end{enumerate}
\end{thm}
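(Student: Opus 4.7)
The plan is to deduce Theorem~\ref{mainthm} directly from Theorem~\ref{mainthm:alpha} by specializing to the case $c = 1$. The hypothesis $\e_R(I,M) = \length_R(M/IM)$ trivially gives $\e_R(I,M) \le 1 \cdot \length_R(M/IM)$, so the numerical hypothesis on $c$ in Theorem~\ref{mainthm:alpha} is met with $c = 1$. What needs checking is that the strict inequalities of the form $c < 1 + \tfrac{1}{\ast}$ appearing in the relevant cases of Theorem~\ref{mainthm:alpha} become automatic when $c = 1$.

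The theorem is trivial when $N = 0$, so I may assume $N \neq 0$. Since $M \neq 0$ and $\dim_R(M) = \dim_R(M \otimes_R N)$, the module $M \otimes_R N$ is nonzero, so $\e_R(I, M \otimes_R N)$ is a positive integer; likewise $\e_R(I, N) \geq 1$. Therefore $1 < 1 + \tfrac{1}{\e_R(I,\,M \otimes_R N)}$ and $1 < 1 + \tfrac{1}{\e_R(I, N)}$, so the strict numerical conditions on $c$ in Theorem~\ref{mainthm:alpha} hold automatically for $c = 1$.

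It then remains simply to match the hypotheses case by case. Condition (i) of the present theorem is precisely the non-numerical part of case (i) of Theorem~\ref{mainthm:alpha}. Condition (ii), together with its sub-cases (a), (b), (c), matches case (iii) of Theorem~\ref{mainthm:alpha} verbatim. Condition (iii) is the non-numerical portion of case (iv) of Theorem~\ref{mainthm:alpha}. In each situation, applying Theorem~\ref{mainthm:alpha} with $c = 1$ yields the desired conclusion that $N$ is free over $R$.

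I do not anticipate any substantive obstacle: the present theorem is really a specialization of Theorem~\ref{mainthm:alpha} repackaged for convenient reference, and the only (very minor) point is verifying the positivity of $\e_R(I, M \otimes_R N)$ and $\e_R(I, N)$, which is immediate from $M, N \neq 0$ and the dimension hypothesis.
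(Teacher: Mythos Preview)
Your proposal is correct and follows exactly the paper's approach: the paper's proof is a single line stating that cases (i), (ii), and (iii) follow from Theorem~\ref{mainthm:alpha}(i), (iii), and (iv), respectively. You have simply spelled out the one detail the paper leaves implicit, namely that the strict inequalities on $c$ in Theorem~\ref{mainthm:alpha} are automatic when $c=1$ because the relevant multiplicities are positive integers.
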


\begin{proof} The cases (i), (ii), and (iii) follows from Theorem~\ref{mainthm:alpha} (i), (iii), and (iv), respectively.
\end{proof}

The following examples demonstrate several angles along which Theorem~\ref{mainthm} is sharp. The first example highlights that the assumption $\dim_R(M)=\dim_R(M \otimes_R N)$ cannot be removed:

\begin{eg} Let $R=k[\![x,y]\!]/(xy)$, $M=R/(x)$, and let $N=R/(y)$. Then $M$ is Ulrich, $M\otimes_RN\cong k$, $\Tor_1^R(M,N)=0$ and $\dim_R(M)=1\neq 0 = \dim_R(M\otimes_RN)$. 
\end{eg}

The next example shows that the hypothesis that $M$ is Ulrich is needed in Theorem \ref{mainthm}:

\begin{eg} Let $R=k[\![x,y]\!]/(x^2, xy)$, $M=R/(y)$ and $N=R/(x)$. Then $M\otimes_RN\cong k$ so $M\otimes_RN$ is Cohen-Macaulay. Moreover $\dim_R(M)=0= \dim_R(M\otimes_RN)$ and $\Tor_1^R(M,N)=0$. Note that $\mu_R(M)=1<2=\e_R(M)$ and $N$ is not free. 
\end{eg}

The next example shows that the torsion assumption on $\Tor_1^R(M,N)$ is needed in Theorem \ref{mainthm}:

\begin{eg} Let $R=k[\![x,y]\!]/(x^2)$, $M=N=R/(x)$. Then $M\otimes_R N\cong M$ are maximal Cohen-Macaulay $R$-modules with $\Supp_R(M)=\Spec(R)$ and $\mu_R(M)=1=\e_R(M)$ while $N$ is not free. On the other hand, we have $\Tor_i^R(M,N) \cong M$ for all $i\geq 0$, making $\Tor_1^R(M,N)$ torsion-free.
\end{eg}

Next is an example where Theorem \ref{mainthm:alpha} applies, but Theorem \ref{mainthm} does not. 

\begin{eg} \label{ex:sharp-4/3} Let $R=k[\![x,y,z]\!]/(xy,z^2)$, so that $M=\m$ is a $\frac {4}{3}$-Ulrich module; see Example~\ref{ex:4/3}. If $M \otimes_R N$ is Cohen-Macaulay with $\rank_R(N) = 1$ or $2$, then $N$ must be free by Theorem~\ref{mainthm:alpha}(ii). Thus, for any $\fm$-primary ideal $\fa$ of $R$ that is not principal, $\fm \otimes_R \fa$ must have torsion. 
\end{eg}


\section{On the test and rigidity properties of \texorpdfstring{$c$}{c}-Ulrich modules}

In this section, we prove some Corollaries of our main theorems on the test and rigidity properties of $c$-Ulrich module. Each of these results can be established in more generality, in view of Theorems \ref{mainthm:alpha} and \ref{mainthm} and Remark \ref{rmk:alpha}, but as these versions carry a great deal of technicality, we propose simpler versions. In particular, we focus on modules that are $c$-Ulrich with respect to the maximal ideal, rather than an arbitrary $\m$-primary ideal $I$.

It follows from work of \cite{JL07} (cf. \cite{JL07-E}) that any $c$-Ulrich module for $c<2$ is a test module for finite projective dimension in the sense of \cite{CD14}. Our next result shows that, at the expense of mild hypotheses on rank, the number of vanishings required can be substantially reduced.

\begin{cor}\label{pdtest:alpha} Let $R$ be a local ring, and let $M, N$ be $R$-modules such that $M$ is $c$-Ulrich for some real number $c<2$. Suppose that either $M$ has positive rank or that $N$ has rank and is free on $\Ass_R(M)$. Set $n=\dim_R(M)$, $v=\depth_R(M \otimes_R N)$,  and 
\[t=\begin{cases}\min\left\{s-1 \in \mathbb{Z}_{\geq 0} \mid \rank_R\big(\Omega^{n-v}_R N \big) \cdot (c-1)^s<1\right\}, & \text{ if $N$ has rank and is free on $\Ass(M)$}. \\[8pt] 
\min\left\{s-1 \in \mathbb{Z}_{\geq 0} \mid \e_R\big(\Omega^{n-v}_RN\big) \cdot (c-1)^s<1\right\}, & \text{ otherwise.} 
\end{cases}\] 
If $\Tor^R_i(M,N)=0$ for all $i=1,\dots,n-v+t$,  then $\pd_R(N)<\infty$. 
\end{cor}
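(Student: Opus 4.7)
The plan is to apply Theorem~\ref{mainthm:alpha} to a sufficiently high syzygy of $N$ and show that it is free, from which $\pd_R(N) < \infty$ follows. Set $Q_j := \Omega^{n-v+j}_R(N)$ for $j \geq 0$, and assume every $Q_j$ is nonzero (else $\pd_R(N)$ is already finite). First, I show by induction on $j \in \{0, \dots, t\}$ that $M \otimes_R Q_j$ is Cohen-Macaulay of dimension $n$. The hypothesis $\Tor_i^R(M, N) = 0$ for $i = 1, \dots, n-v+t$ ensures that the syzygy short exact sequences $0 \to Q_{j+1} \to R^{\mu_R(Q_j)} \to Q_j \to 0$ remain exact on tensoring with $M$. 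Applying the depth lemma iteratively, starting from $\depth_R(M \otimes_R N) = v$, yields $\depth_R(M \otimes_R Q_j) \geq n$, and since $M \otimes_R Q_j$ is a subquotient of a free $M$-module its dimension is at most $n$, giving the Cohen-Macaulay property. By Lemma~\ref{lem:c}(i), each $M \otimes_R Q_j$ is $c$-Ulrich.

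Next, I establish a recursion showing that the relevant invariant of $Q_j$ shrinks by a factor of at most $c-1$ at each step. From the exact sequence $0 \to M \otimes_R Q_{j+1} \to M^{\mu_R(Q_j)} \to M \otimes_R Q_j \to 0$, additivity of multiplicity (valid as all three modules have dimension $n$) gives $\e_R(M \otimes_R Q_{j+1}) = \mu_R(Q_j)\e_R(M) - \e_R(M \otimes_R Q_j)$. Combining $\e_R(M) \leq c\,\mu_R(M)$ with $\mu_R(M)\mu_R(Q_j) = \mu_R(M \otimes_R Q_j) \leq \e_R(M \otimes_R Q_j)$ from Remark~\ref{bound} produces $\e_R(M \otimes_R Q_{j+1}) \leq (c-1)\,\e_R(M \otimes_R Q_j)$. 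In the rank case, being free on $\Ass(M)$ is inherited by syzygies, so the associativity formula gives $\e_R(M \otimes_R Q_j) = \rank(Q_j) \cdot \e_R(M)$, translating the recursion to $\rank(Q_{j+1}) \leq (c-1)\rank(Q_j)$, and hence $\rank(Q_t) \leq (c-1)^t \rank(Q_0)$. In the other case $M$ has positive rank, and a parallel associativity argument gives $\e_R(M \otimes_R Q_j) = \rank(M) \cdot \e_R(Q_j)$, producing $\e_R(Q_t) \leq (c-1)^t \e_R(Q_0)$.

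Finally, I apply Theorem~\ref{mainthm:alpha} to $Q_t$. In the rank case, Theorem~\ref{mainthm:alpha}(ii) applies: $Q_t$ is free on $\Assh_R(M)$ and has positive rank (otherwise $Q_{t-1}$ is already free, finishing the proof), and the defining property of $t$ gives $\rank(Q_t)(c-1) \leq (c-1)^{t+1} \rank(Q_0) < 1$, so $c < 1 + 1/\rank(Q_t)$, forcing $Q_t$ to be free. In the other case, Theorem~\ref{mainthm:alpha}(iii)(c) applies: $M$ is Cohen-Macaulay hence unmixed, $\Supp_R(M) = \Spec(R)$ since $M$ has positive rank, and $\Tor_1^R(M, Q_t)$ is torsion because $M_{\fp}$ is free for each $\fp \in \Ass(R)$. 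The bound $c < 1 + 1/\e_R(M \otimes_R Q_t)$ is then forced by the multiplicity recursion together with the defining property of $t$. In either case $Q_t$ is free, yielding $\pd_R(N) \leq n-v+t < \infty$. The most delicate point is the precise tracking of constants in the second case, where the factor $\rank(M)$ introduced by the associativity formula must be reconciled with the defining condition on $t$ phrased in terms of $\e_R(\Omega^{n-v}_R N)$.
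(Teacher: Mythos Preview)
Your proof tracks the paper's argument closely in structure---reduce to syzygies, use the depth lemma to get Cohen--Macaulayness of $M\otimes_R Q_j$, establish the geometric decay $\e_R(M\otimes_R Q_{j+1})\le(c-1)\,\e_R(M\otimes_R Q_j)$, then invoke Theorem~\ref{mainthm:alpha}. The rank case is handled correctly and matches the paper's use of Theorem~\ref{mainthm:alpha}(ii).

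The gap is exactly where you flag it. In the case where $M$ has positive rank, you invoke Theorem~\ref{mainthm:alpha}(iii)(c), whose numerical hypothesis is $c<1+\dfrac{1}{\e_R(M\otimes_R Q_t)}$. Since $\e_R(M\otimes_R Q_t)=\rank_R(M)\cdot\e_R(Q_t)$, you would need $(c-1)\cdot\rank_R(M)\cdot\e_R(Q_t)<1$, whereas the defining property of $t$ together with the recursion only yields $(c-1)\,\e_R(Q_t)\le(c-1)^{t+1}\e_R(Q_0)<1$. When $\rank_R(M)>1$ this does not suffice, and the argument breaks down. The paper avoids this by appealing instead to Theorem~\ref{mainthm:alpha}(iv), which requires only $c<1+\dfrac{1}{\e_R(Q_t)}$ (together with $M$ of positive rank and $R$ unmixed, the latter following from $M$ being Cohen--Macaulay of positive rank). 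That bound is precisely what the recursion delivers, so part~(iv) is the right tool here, not part~(iii)(c).
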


\begin{proof}
Assume $\Tor^R_i(M,N) = 0$ for $i=1,\dots,n-v+t$. Then there are exact sequences
\begin{equation}
\tag{\ref*{pdtest:alpha}.1}\label{tor-syzygy}
0 \to M \otimes_R \Omega^i_R(N) \to M^{\oplus \beta^R_{i-1}(N)} \to M \otimes_R \Omega^{i-1}_R(N) \to 0
\end{equation}
for all $i=1, \ldots,  n-v+t$. By applying the depth lemma to \eqref{tor-syzygy}, we conclude that $M \otimes_R \Omega^i_R(N)$ is Cohen-Macaulay of dimension $n$ for all $i=n-v, \ldots, n-v+t$. Inductively, we see 
\[
\tag{\ref*{pdtest:alpha}.2}\label{e-syzygy}
\e(M \otimes_R \Omega^{n-v+i}_R(N)) \le (c-1)^i \cdot \e(M \otimes_R \Omega^{n-v}_R(N)) \quad \text{for all} \quad i = 0, \,1,\, \dotsc,\, t.
\]

When $N$ has rank and is free on $\Ass(M)$, the same is true for $\Omega_R^{n-v+t}(N)$. If $\Omega_R^{n-v+t}(N)=0$, there is nothing to prove. So we assume $\Omega_R^{n-v+t}(N) \ne 0$ so that $M \otimes_R \Omega_R^{n-v+t} \ne 0$. Since $M$ is Cohen-Macaulay, we see $\dim(M \otimes_R \Omega_R^{n-v+t})=\dim(M)$ from \ref{tor-syzygy}. Also, \ref{e-syzygy} indicates \[r:=\rank(\Omega_R^{n-v+t}(N)) \le (c-1)^t \cdot \rank(\Omega_R^{n-v}(N)).\]
Hence $(c-1)^{t+1} \cdot \rank(\Omega^{n-v}_R(N))<1$ which forces $c<1+\frac{1}{r}$. By Theorem~\ref{mainthm:alpha}(ii), this forces $\Omega^{n-v+t}_R(N)$ to be free, so that $\pd_R(N)<\infty$. 

In the case where $M$ has positive rank, it follows that $R$ is unmixed since $M$ is Cohen-Macaulay, and \eqref{e-syzygy} implies \[\e_R(\Omega^{n-v+t}_R(N)) \le (c-1)^t \cdot \e(\Omega^{n-v}_R(N)).\] We thus have $(c-1)^{t+1} \cdot \e_R(\Omega^{n-v}_R(N)) < 1$. Therefore $c-1 < \frac{1}{\e_R(\Omega^{n-v+t}_R(N))}$ which yields the freeness of $\Omega^{n-v+t}_R(N)$ in light of Theorem~\ref{mainthm:alpha}(iv). 
\end{proof}

Next, we show that $c$-Ulrich modules for $c<2$ can be used to test injective dimension under mild hypotheses. We will use the following technical lemma, which is a direct improvement over \cite[3.4 (1)-(2)]{LM19}. When $R$ is Cohen-Macaulay with canonical module $\w_R$, we set $(-)^{\dagger}:=\Hom_R(-,\w_R)$. 

\begin{lem} \label{J} 
Let $R$ be $d$-dimensional Cohen-Macaulay ring with canonical module $\w_R$, let $M$ be an $R$-module, and let $N$ be a maximal Cohen-Macaulay $R$-module. For some $n \ge d$, suppose $\depth(\Ext_R^{n-i}(M, N)) \geq \min\{i+1, d\}$ for all $i = 0,\dotsc, n-1$ \footnote{By convention, we regard the zero module to have infinite depth, and consider it to be maximal Cohen-Macaulay.}. Then $\Tor_i(M, N^{\dagger}) \cong \Ext_R^i(M,N)^{\dagger}$ for all $i=0,\dots,n-d$ and they are maximal Cohen-Macaulay. In particular, $M \otimes_R N^{\dagger}$ is maximal Cohen-Macaulay.
\end{lem}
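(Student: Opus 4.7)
The plan is to derive the statement from the derived-category identification $M \otimes^L_R N^\dagger \simeq \RHom_R(\RHom_R(M, N),\omega_R)$, which follows from $\RHom_R(M, N) \simeq \RHom_R(M \otimes^L_R N^\dagger, \omega_R)$ (tensor-Hom adjunction together with $N \cong N^{\dagger\dagger}$, valid since $N$ is MCM) combined with the biduality property of $\omega_R$ as a dualizing complex over the Cohen--Macaulay ring $R$. Taking cohomology on the right-hand side via the convergent hyper-Ext spectral sequence
\[
E_2^{p,q} = \Ext^p_R\!\bigl(\Ext^q_R(M,N),\ \omega_R\bigr) \;\Longrightarrow\; \Tor^R_{q-p}(M, N^\dagger),
\]
the filtration pieces of $T_i := \Tor^R_i(M, N^\dagger)$ lie on the antidiagonal $q - p = i$.

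Local duality ($\Ext^p_R(Z, \omega_R) = 0$ for $p > d - \depth Z$) translates the depth hypothesis into the vanishings $E_2^{p, q} = 0$ for $p \geq 1$ whenever $1 \leq q \leq n-d+1$, and $E_2^{p, q} = 0$ for $p \geq q - (n-d)$ whenever $n-d+2 \leq q \leq n$; also $E_2^{p, q} = 0$ whenever $p > d$. A direct case analysis then shows that for each $i \in \{0,\dots,n-d\}$ and each $p \geq 1$ one has $E_2^{p,\,p+i} = 0$, so the filtration of $T_i$ collapses onto the single surviving piece $E_\infty^{0, i}$; and every outgoing differential from $E_r^{0,i}$ has a target that is a subquotient of a vanishing $E_2$-entry, so $E_\infty^{0, i} = E_2^{0, i} = \Ext^i_R(M, N)^\dagger$. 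This yields $T_i \cong \Ext^i_R(M, N)^\dagger$ for $i \in \{0,\dots,n-d\}$. For $1 \le i \le n-d$ the condition $\min\{n-i+1, d\} = d$ forces $\Ext^i_R(M, N)$ to be MCM, and the $\omega_R$-dual of an MCM module is MCM, so $T_i$ is MCM in this range.

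The remaining and principal obstacle is the MCM property of $T_0$. While $T_0 \cong \Hom_R(M, N)^\dagger$ is automatically $S_2$ (as every $\omega_R$-dual is, visible from the left-exact sequence $0 \to Z^\dagger \to \omega_R^{b_0} \to \omega_R^{b_1}$ coming from a free presentation of $Z$), the full depth $d$ requires more. I would feed the MCM-ness of $T_1, \ldots, T_{n-d}$ into the companion spectral sequence
\[
\Ext^p_R(T_q,\ \omega_R) \;\Longrightarrow\; \Ext^{p+q}_R(M, N)
\]
arising from the first isomorphism above; under this input most filtration pieces of $\Ext^m_R(M, N)$ (for $m \le n-d$) vanish, and the edge maps become canonical isomorphisms compatible with those from the previous spectral sequence by naturality. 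Tracing the incoming differentials to the column $(p, 0)$ and using the vanishings already proved forces $\Ext^p_R(T_0, \omega_R) = 0$ for every $p \geq 1$, giving $T_0$ MCM. The ``In particular'' statement is then the case $i = 0$.
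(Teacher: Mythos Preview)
Your spectral-sequence approach is genuinely different from the paper's and is largely correct, but there is a real gap in the final step.

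\textbf{What works.} The derived identification $M\otimes^L_R N^\dagger\simeq\RHom_R(\RHom_R(M,N),\omega_R)$ and the hyper-Ext spectral sequence $E_2^{p,q}=\Ext^p_R(\Ext^q_R(M,N),\omega_R)\Rightarrow\Tor^R_{q-p}(M,N^\dagger)$ are set up correctly. Your translation of the depth hypotheses via local duality is right, and the collapse on the antidiagonals $q-p=i$ for $0\le i\le n-d$ does give $T_i\cong\Ext^i_R(M,N)^\dagger$, with $T_i$ MCM for $1\le i\le n-d$.

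\textbf{The gap.} Your argument for $T_0$ MCM does not go through. In the companion spectral sequence $E_2^{p,q}=\Ext^p_R(T_q,\omega_R)\Rightarrow\Ext^{p+q}_R(M,N)$, the only vanishing you have available on the $E_2$-page is for $p\ge 1$ and $1\le q\le n-d$. The incoming differential $d_p:E_p^{0,p-1}\to E_p^{p,0}$ has source a subquotient of $T_{p-1}^\dagger$, which need not vanish; so $E_2^{p,0}\to E_\infty^{p,0}$ is only a surjection, and knowing $E_\infty^{p,0}\hookrightarrow\Ext^p_R(M,N)$ does not force $E_2^{p,0}=0$. The compatibility of edge maps you invoke would require identifying the edge map $\Ext^m_R(M,N)\to T_m^\dagger$ with the biduality isomorphism, which is not automatic. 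Most tellingly, take $n=d$: then $n-d=0$, so there are \emph{no} $T_1,\dots,T_{n-d}$ to feed in, and your companion sequence gives no information whatsoever about $\Ext^p_R(T_0,\omega_R)$ for $p\ge 1$ --- yet the lemma still asserts $T_0$ is MCM.

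\textbf{How the paper handles this.} The paper avoids spectral sequences entirely. It takes the complex $\Hom_R(F_\bullet,N)$, observes that $\im\partial_n\hookrightarrow N^{\beta_{n+1}}$ forces $\depth(\im\partial_n)\ge 1$, and then bootstraps downward using the depth lemma on the short exact sequences $0\to\im\partial_i\to\ker\partial_{i+1}\to\Ext_R^{i+1}(M,N)\to 0$ and $0\to\ker\partial_i\to\Hom_R(F_i,N)\to\im\partial_i\to 0$, gaining one unit of depth at each step against the graded depth hypotheses on the $\Ext$'s. This directly yields $\Hom_R(M,N)=\ker\partial_0$ MCM (whence $T_0=\Hom_R(M,N)^\dagger$ is MCM), and simultaneously that every cycle, boundary, and homology of the truncated complex is MCM; applying $(-)^\dagger$ then preserves exactness and yields the Tor-Ext isomorphisms. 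The crucial input your argument is missing is precisely this ``$\depth(\im\partial_n)\ge 1$ and climb down'' step, which has no obvious spectral-sequence avatar. You could patch your proof by inserting exactly this depth-lemma argument to get $\Hom_R(M,N)$ MCM, at which point the rest of your spectral-sequence reasoning is a valid (if heavier) alternative to the paper's dualizing of the complex.
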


\begin{proof} Let $\cdots \rightarrow F_{i+1} \xrightarrow{d_i} F_i \rightarrow \cdots \rightarrow F_1 \xrightarrow{d_0} F_0 \rightarrow 0$ be a minimal free resolution of $M$, and set $\partial_i:=\Hom_R(d_i,N)$ for each $i$. Applying $\Hom_R(-,N)$, we have a complex
\begin{equation}
\tag{\ref*{J}.1}\label{dualcomplex}0 \to \Hom_R(M,N) \to \Hom_R(F_0,N) \to \cdots \to \Hom_R(F_{n-d+1},N) \to \im \partial_{n-d+1} \to 0\end{equation}
whose homologies, $\Ext^i_R(M,N)$ for $i=1,\dots,n-d$, are maximal Cohen-Macaulay by assumption. We claim $\im \partial_{i}$ is maximal Cohen-Macaulay for $i=0,\dots,n-d+1$ and that $\ker \partial_{i}$ is maximal Cohen-Macaulay for $i=0,\dots,n-d$. Indeed, for each $0 \le i \le n$, we have short exact sequences 
\[\tag{\ref*{J}.2}\label{ses1}
0 \to \im \partial_{i} \to \ker \partial_{i+1} \to \Ext^{i}_R(M,N) \to 0
\quad\text{and}\quad
0 \to \ker \partial_{i} \to \Hom_R(F_{i},N) \to \im \partial_{i} \to 0.
\]
As $\im \partial_{n} \hookrightarrow \Hom_R(F_{n+1},N)$, we have $\depth (\im \partial_{n}) \ge 1$. Applying the depth lemma repeatedly to the exact sequences in \eqref{ses1}, 
we see that $\im \partial_{i}$ is maximal Cohen-Macaulay for $i=0,\dots,n-d+1$ and that $\ker \partial_{j}$ is maximal Cohen-Macaulay for $j=0,\dots,n-d$. As $\Hom_R(M,N) \cong \ker \partial_0$, we have in particular that $\Hom_R(M,N)$ is maximal Cohen-Macaulay as well.

We have then obtained that every term, cycle, boundary, and homology of \eqref{dualcomplex} is maximal Cohen-Macaulay. Thus applying $(-)^{\dagger}$ to \eqref{dualcomplex}, we obtain another complex whose homologies are $\Ext^i_R(M,N)^{\dagger}$ for $i=1,\dots,n-d$. This complex may be identified, via Hom-tensor adjointness, with the complex
\[0 \to (\im \partial_{n-d+1})^{\dagger} \to (F_{n-d+1} \otimes_R N^{\dagger})^{\dagger \dagger} \to \cdots \to (F_0 \otimes_R N^{\dagger})^{\dagger \dagger} \to \Hom_R(M,N)^{\dagger} \to 0\]
Since $N$ is maximal Cohen-Macaulay, the natural maps induce an isomorphism of complexes
\[\begin{tikzcd}[column sep=small]
	0 & {F_{n-d+1} \otimes_R N^{\dagger}} & \cdots & {F_1 \otimes_R N^{\dagger}} & {F_0 \otimes_R N^{\dagger}} & 0 \\
	0 & {(F_{n-d+1} \otimes_R N^{\dagger})^{\dagger \dagger}} & \cdots & {(F_1 \otimes_R N^{\dagger})^{\dagger \dagger}} & {(F_0 \otimes_R N^{\dagger})^{\dagger \dagger}} & 0
	\arrow[from=1-1, to=1-2]
	\arrow[from=1-2, to=1-3]
	\arrow[from=1-3, to=1-4]
	\arrow[from=1-4, to=1-5]
	\arrow[from=1-5, to=1-6]
	\arrow[from=2-1, to=2-2]
	\arrow[from=2-2, to=2-3]
	\arrow[from=2-3, to=2-4]
	\arrow[from=2-4, to=2-5]
	\arrow[from=2-5, to=2-6]
	\arrow[from=1-2, to=2-2]
	\arrow[from=1-4, to=2-4]
	\arrow[from=1-5, to=2-5]
\end{tikzcd}\]

By above, the bottom row has homologies $\Ext^i_R(M,N)^{\dagger}$ in degrees $i=0,\dots,n-d$. Comparing with the homologies of the top row, it follows that $\Tor^R_i(M,N^{\dagger}) \cong \Ext^i_R(M,N)^{\dagger}$ for $i=0,\dots,n-d$, and the claim follows. 
\end{proof}

It follows immediately from combining Lemma \ref{J} with \cite[2.1]{JL07-E} that $c$-Ulrich modules for $c<2$ are test modules for finite injective dimension. But as with Corollary \ref{pdtest:alpha}, our methods allow us to greatly reduce the number of vanishings required, at the expense of mild hypotheses on rank.

We first consider the test behavior of Ulrich modules for finite injective dimension.

\begin{cor} \label{corExt1} Let $R$ be a $d$-dimensional Cohen-Macaulay local ring and let $M$ and $N$ be $R$-modules. Assume the following conditions hold:
\begin{enumerate}[\rm(i)]
\item $M$ is Ulrich.
\item $M$ or $N$ is maximal Cohen-Macaulay. 
\item $\Ext^i_R(M,N)=0$ for all $i=1, \ldots, d$.
\item Either $(a)$ $\depth_R\big(\Ext^{d+1}_R(M,N)\big)>0$, or $(b)$ $\Supp_R(M)=\Spec(R)$ and $d>0$.
\end{enumerate}
Then $\id_R(N)<\infty$.
\end{cor}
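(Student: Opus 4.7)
My plan is to use Lemma~\ref{J} to obtain that $M\otimes_R N^{\dagger}$ (where $N^{\dagger}:=\Hom_R(N,\omega_R)$) is maximal Cohen-Macaulay, and --- in the favorable case --- that $\Tor_1^R(M,N^{\dagger})=0$. Theorem~\ref{mainthm} will then force $N^{\dagger}$ to be free over $R$; applying $\Hom_R(-,\omega_R)$ once more gives $N\cong N^{\dagger\dagger}\cong \omega_R^{\oplus r}$, and hence $\id_R N=d<\infty$.

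By passing to the $\fm$-adic completion, I may assume $R$ admits a canonical module $\omega_R$. Lemma~\ref{J} requires $N$ to be maximal Cohen-Macaulay; if $N$ is not already so, then by~(ii) $M$ is maximal Cohen-Macaulay, and I take an Auslander-Buchweitz maximal Cohen-Macaulay approximation $0\to Y\to X\to N\to 0$ with $X$ maximal Cohen-Macaulay and $\id_R Y<\infty$. Since $\Ext^i_R(M,Y)=0$ for all $i\geq 1$ (as $M$ is maximal Cohen-Macaulay and $\id_R Y<\infty$), one has $\Ext^i_R(M,X)\cong \Ext^i_R(M,N)$ for $i\geq 1$, so $X$ inherits hypotheses~(iii) and~(iv) from $N$, and proving $\id_R X<\infty$ yields $\id_R N<\infty$ via the short exact sequence. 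So I may assume $N$ is maximal Cohen-Macaulay. In case~(iv)(b), the additional hypothesis $\Supp_R M=\Spec R$ also forces $M$ to be maximal Cohen-Macaulay.

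I now apply Lemma~\ref{J}. In case~(iv)(a), with $n=d+1$, the depth hypotheses are satisfied by~(iii) together with $\depth_R\Ext^{d+1}_R(M,N)\geq 1$, so I obtain both that $M\otimes_R N^{\dagger}$ is maximal Cohen-Macaulay and that $\Tor_1^R(M,N^{\dagger})\cong \Ext^1_R(M,N)^{\dagger}=0$. In case~(iv)(b), with $n=d$, I obtain only that $M\otimes_R N^{\dagger}$ is maximal Cohen-Macaulay. Since $M$ is Cohen-Macaulay it is automatically unmixed, so I now invoke Theorem~\ref{mainthm}: in case~(iv)(a), part~(ii)(b) applies directly via $\Tor_1=0$, yielding $N^{\dagger}$ free.

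In case~(iv)(b) I appeal to part~(ii)(c), using $\Supp_R M=\Spec R$; this additionally requires $\Tor_1^R(M,N^{\dagger})$ to be torsion, which I expect to be the main technical obstacle, since Lemma~\ref{J} with $n=d$ gives no direct control over $\Tor_1$. To handle it I would first use the multiplicity comparison of Lemma~\ref{lem:c} (with $c=1$) together with a kernel-dimension argument as in the proof of Theorem~\ref{mainthm}(ii) to force the isomorphism $M\otimes_R N^{\dagger}\cong M^{\oplus \mu_R(N^{\dagger})}$; then Lemma~\ref{lem:J} places the entries of a minimal presentation of $N^{\dagger}$ inside $\Ann_R M$; finally, localizing at each $\fp\in\Ass R$ and exploiting the Ulrich structure on $M_{\fp}$ --- or, if that falls short, reducing the dimension via a regular element on $M$, enabled by $d>0$ --- should yield the required vanishing.
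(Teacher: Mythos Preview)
Your overall architecture --- complete to get $\omega_R$, replace $N$ by a maximal Cohen-Macaulay approximation when $M$ is maximal Cohen-Macaulay, then feed Lemma~\ref{J} into Theorem~\ref{mainthm} --- is exactly the paper's approach, and your treatment of case~(iv)(a) via Lemma~\ref{J} with $n=d+1$ followed by Theorem~\ref{mainthm}(ii)(b) is essentially identical to what the paper does.

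The gap is in case~(iv)(b). You correctly identify that the missing ingredient is the torsion of $\Tor_1^R(M,N^\dagger)$, but your proposed route to it is both roundabout and incomplete. The first two steps (forcing $M\otimes_R N^\dagger\cong M^{\oplus\mu_R(N^\dagger)}$ via unmixedness, then invoking Lemma~\ref{lem:J}) are fine, but they are already packaged inside the proof of Theorem~\ref{mainthm}(ii), so re-deriving them gains nothing. The real issue is your third step: ``exploiting the Ulrich structure on $M_\fp$'' is not a valid move, since the Ulrich property does not localize --- there is no reason for $\ell_{R_\fp}(M_\fp)$ to equal $\mu_{R_\fp}(M_\fp)$. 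And after Proposition~\ref{High Bridge Lemma}(ii) reduces you to showing $\Omega^1_R(N^\dagger)_\fp=0$, equivalently that $(N^\dagger)_\fp$ is $R_\fp$-free, you have given no mechanism to force this; nothing you have written uses the crucial hypothesis $\Ext^1_R(M,N)=0$ (available because $d>0$). The dimension-reduction fallback is also problematic: cutting $M$ by an $M$-regular element shifts the $\Ext$ vanishing range and does not obviously preserve what you need.

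The paper's argument for~(iv)(b) is much more direct and avoids all of this: for each $\fp\in\Ass(R)$, apply Lemma~\ref{J} \emph{over the Artinian localization $R_\fp$} with $n=1$ and $d=0$. The depth hypothesis there reduces to $\depth\Ext^1_{R_\fp}(M_\fp,N_\fp)\ge 0$, which is automatic since $\Ext^1_R(M,N)=0$ localizes. Lemma~\ref{J} then yields $\Tor_1^{R_\fp}(M_\fp,(N^\dagger)_\fp)\cong\Ext^1_{R_\fp}(M_\fp,N_\fp)^\dagger=0$, so $\Tor_1^R(M,N^\dagger)$ is torsion, and Theorem~\ref{mainthm}(ii)(c) finishes. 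The key idea you missed is that Lemma~\ref{J} can be applied a second time, locally, to extract the $\Tor_1$ information that the global application with $n=d$ could not provide.
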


\begin{proof} We may assume $R$ is complete so that $R$ has a canonical module $\omega_R$. Consider the case where $M$ is maximal Cohen-Macaulay. Then we take a maximal Cohen-Macaulay approximation $0 \to Y \to L \to N \to 0$, so that $\id_R(Y)<\infty$ and $L$ is maximal Cohen-Macaulay (see \cite[11.8]{LW12}). Since $M$ is maximal Cohen-Macaulay and $\id_R(Y)<\infty$, we have $\Ext^i_R(M,Y)=0$ for all $i>0$, and $\id_R(L)<\infty$ if and only if $\id_R(N)<\infty$ \cite[11.3]{LW12}. We may thus suppose that $N$ is maximal Cohen-Macaulay.

Now assume part (a) of condition (iv) holds. Then $\Tor_1^R(M,N^{\dagger})=0$ by Lemma \ref{J}. Hence Theorem \ref{mainthm}(ii)(b) shows that $N^{\dagger}$ is free. 

Next assume part (b) of condition (iv) holds. Let $\fp \in \Ass(R)$. As $\Ext^1_{R_{\fp}}(M_{\fp}, N_{\fp})=0$, we use Lemma \ref{J} with $M_{\fp}$ and $N_{\fp}$ over the Artinian ring $R_{\fp}$, and conclude that $\Tor_1^R(M,N^{\dagger})_{\fp}=0$. This shows that $\Tor_1^R(M,N^{\dagger})$ is torsion. Thus Theorem \ref{mainthm}(ii)(c) shows that  $N^{\dagger}$ is free.
\end{proof}

\begin{defn} Let $R$ be a Cohen-Macaulay local ring and let $M$ be an $R$-module. We say $M$ has dual rank $r$, denoted by $\drank_R(M)=r$, if $\widehat{M}_{\fp}\cong \big(\omega_{\widehat{R}}\big)^{\oplus r}_{\fp}$ for all $\fp \in \Ass(\widehat{R})$, where $\hat{R}$ denotes the $\m$-adic completion of $R$ and $\w_{\hat{R}}$ is its canonical module. 

Note that if $M$ has rank, then $M$ has dual rank if and only if $\hat{R}$ is generically Gorenstein, in which case the rank and dual rank of $M$ agree. 
\end{defn}

We now consider the behavior for $c$-Ulrich modules.

\begin{cor} \label{corExt2} Let $R$ be a $d$-dimensional Cohen-Macaulay local ring and let $M$ and $N$ be $R$-modules. Assume that the following conditions hold:
\begin{enumerate}[\rm(i)]
\item $M$ is $c$-Ulrich for some $c$ where 
\[c<\begin{cases} 
1+ \dfrac{1}{\drank(N)}, & \text{ if $N$ has positive dual rank.} \\[13pt]
1+ \dfrac{1}{\rank(N)}, & \text{ if $M$ and $N$ both have positive rank.} \\[13pt]
1+ \dfrac{1}{\e_R(N)}, & \text{ if $M$ has positive rank, but $N$ does not have positive rank nor dual rank.}

\end{cases}\]

\item $N$ is maximal Cohen-Macaulay. 
\item $\Ext^i_R(M,N)=0$ for all $i=1, \ldots, d$.
\end{enumerate}
Then $\id_R(N)<\infty$.
\end{cor}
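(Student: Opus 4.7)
The plan is to mirror the proof of Corollary~\ref{corExt1}, but to invoke Theorem~\ref{mainthm:alpha} in place of Theorem~\ref{mainthm} so as to exploit the relaxed bound on $c$. After passing to the completion, we may assume $R$ has a canonical module $\omega_R$; set $(-)^\dagger = \Hom_R(-,\omega_R)$. Since $N$ is maximal Cohen-Macaulay, so is $N^\dagger$, and reflexivity gives $N \cong (N^\dagger)^\dagger$; hence it suffices to show $N^\dagger$ is free, for then $N \cong \omega_R^{\oplus s}$ has finite injective dimension.

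The vanishings $\Ext^i_R(M,N) = 0$ for $i = 1,\ldots,d$ render the depth hypotheses of Lemma~\ref{J} with $n = d$ vacuous, so $M \otimes_R N^\dagger$ is maximal Cohen-Macaulay. Localizing at each $\fp \in \Ass(R)$, the ring $R_\fp$ is Artinian and Lemma~\ref{J} applied over $R_\fp$ with $n = 1$ (again vacuous hypotheses) yields $\Tor_1^{R_\fp}\bigl(M_\fp,(N^\dagger)_\fp\bigr) \cong \Ext^1_{R_\fp}(M_\fp,N_\fp)^\dagger = 0$, so that $\Tor_1^R(M,N^\dagger)$ is torsion. In Cases 1 and 2 one further observes $\Supp(N^\dagger) = \Spec(R)$: in Case 1 because $N^\dagger$ has honest rank $r = \drank(N)$ (coming from $N^\dagger_\fp \cong \Hom_{R_\fp}(\omega_{R_\fp}^{\oplus r},\omega_{R_\fp}) \cong R_\fp^{\oplus r}$ for $\fp \in \Ass(R)$), and in Case 2 because $N^\dagger_\fp \cong \omega_{R_\fp}^{\oplus r}$ is nonzero for $\fp \in \Ass(R)$. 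Combined with $\dim(M \otimes_R N^\dagger) = d$, this forces $\dim(M) = d$, so $M$ is itself maximal Cohen-Macaulay and $\Assh(M) \subseteq \Ass(R)$. In Case 3 the positive rank of $M$ already makes $M$ maximal Cohen-Macaulay directly.

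To finish, I would apply Theorem~\ref{mainthm:alpha} to the pair $(M,N^\dagger)$ with $I = \fm$, matching the three stated scenarios to three clauses. In Case 1, clause (ii) applies since $N^\dagger$ has rank $r = \drank(N)$ and is free on $\Ass(R) \supseteq \Assh(M)$, delivering freeness of $N^\dagger$ under $c < 1 + 1/r$. In Case 2, clause (v) applies: one has $\e_R(N^\dagger) = \e_R(N) = r \cdot \e(R)$ via the Matlis-duality identity $\length_R(N/\ux N) = \length_R(N^\dagger/\ux N^\dagger)$ for a minimal reduction $\ux$ of $\fm$, so that $r' = r = \rank(N)$ and $c < 1 + 1/r$ suffices. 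In Case 3, clause (iv) applies using the same identity $\e_R(N^\dagger) = \e_R(N)$ and the positive rank of $M$, yielding freeness under $c < 1 + 1/\e_R(N)$. The main technical obstacle is the routing step itself: the multiplicity identity $\e_R(N^\dagger) = \e_R(N)$ for maximal Cohen-Macaulay $N$, the dual-rank-to-honest-rank translation for $N^\dagger$ in Case 1, and the forced maximal-Cohen-Macaulay property of $M$ in Cases 1 and 2 are each standard but genuine computations whose careful arrangement, combined with the local and global invocations of Lemma~\ref{J}, constitutes the technical core of the argument.
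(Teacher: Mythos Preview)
Your proof is correct and follows essentially the same route as the paper's: complete, apply Lemma~\ref{J} to get $M \otimes_R N^{\dagger}$ maximal Cohen-Macaulay, then invoke the appropriate clause of Theorem~\ref{mainthm:alpha} in each case (clause (ii) for positive dual rank, clauses (v) and (iv) for the remaining two). Two minor streamlinings are available: the paper observes directly that $M \otimes_R N^{\dagger}$ being maximal Cohen-Macaulay already forces $\dim M = d$ (so the $\Supp(N^{\dagger}) = \Spec(R)$ detour is unnecessary), and your torsion-of-$\Tor_1$ paragraph is never actually used, since clauses (ii), (iv), and (v) of Theorem~\ref{mainthm:alpha} carry no $\Tor$ hypothesis.
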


\begin{proof} We may assume $R$ is complete so that $R$ has a canonical module $\omega_R$. By Lemma~\ref{J}, $M \otimes_R N^{\dagger}$ is maximal Cohen-Macaulay, which forces $\dim M=\dim R$, so that $M$ is maximal Cohen-Macaulay. In the case where $N$ has positive dual rank, we see $\rank(N^{\dagger}) = \drank(N) >0$; otherwise $M$ has positive rank and $\e_R(N^{\dagger})=\e_R(N)$, where $\e_R(N) = \rank_R(N)\e_R(R)$ if $N$ has rank. In each case, Theorem \ref{mainthm:alpha} implies $N^{\dagger}$ is free, so that $\id_R(N)<\infty$. 
\end{proof}

\begin{rmk}
As in Corollary \ref{corExt1}, the hypotheses that $N$ be maximal Cohen-Macaulay in Corollary \ref{corExt2} can be exchanged for the hypotheses that $M$ be maximal Cohen-Macaulay, but at the expense of decreasing the value of $c$. Indeed, as in the proof of Corollary \ref{corExt1}, one may take a maximal Cohen-Macaulay approximation $0 \to Y \to L \to N \to 0$ of $N$. The hypothesis on $\Ext$-vanishing will force $\Ext^i_R(M,L)=0$ for a $i=1,\dots,d$, and we have $\id_R(L)<\infty$ if and only if $\id_R(N)<\infty$. So the conclusion that $\id_R(N)<\infty$ will still hold in this situation if $c$ is assumed to be less than $1+\dfrac{1}{\drank(L)}$ in the case when $N$ has positive dual rank, $1+\dfrac{1}{\rank(N)+\drank(Y)}$ in the case when both $M$ and $N$ have positive rank, and $1+\dfrac{1}{\e_R(L)}$ when $M$ has positive rank and $N$ does not have positive rank nor dual rank.
\end{rmk}

Corollary \ref{corExt2} generalizes a well-known result of Ulrich \cite[3.1]{Ul84} that served as a jumping off point historically for the theory of Ulrich modules, and that has been revisited several times in the literature \cite[3.4]{HH05}, \cite[6.8]{LM19}, \cite[2.2, 2.4]{JL07} (cf. \cite[2.2, 2.4]{JL07-E}). Our approach unifies several of these results, and allows for more flexibility in hypotheses. 

\begin{cor} \label{BU} Let $R$ be a $d$-dimensional Cohen-Macaulay local ring and let $M$ be an $R$-module that is $c$-Ulrich for some $c<2$. Assume that either $M$ has positive rank or $\wh R$ is generically Gorenstein.  If $\Ext^i_R(M,R)=0$ for all $i=1, \ldots, d$, then $R$ is Gorenstein. 
\end{cor}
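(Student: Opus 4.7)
The plan is to deduce this corollary as a direct application of Corollary \ref{corExt2} with $N = R$. Hypotheses (ii) and (iii) of Corollary \ref{corExt2} are immediate for this choice: $R$ is maximal Cohen-Macaulay over itself since $R$ is Cohen-Macaulay, and the vanishing $\Ext^i_R(M,R) = 0$ for $i = 1, \ldots, d$ is assumed. The essential step is to check that, in each of the two alternate scenarios of the hypothesis, the numerical condition (i) of Corollary \ref{corExt2} is met by the bound $c < 2$.

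Suppose first that $\wh R$ is generically Gorenstein. Since $R$ (hence $\wh R$) is Cohen-Macaulay, we have $\Ass(\wh R) = \Min(\wh R)$, and for each $\fp \in \Min(\wh R)$ the Artinian local ring $\wh R_\fp$ is Gorenstein, so $\omega_{\wh R, \fp} \cong \wh R_\fp$. Consequently $\drank_R(R) = 1 > 0$, and the required inequality $c < 1 + 1/\drank_R(N)$ in the first case of Corollary \ref{corExt2}(i) reduces precisely to $c < 2$. In the alternate scenario where $M$ has positive rank, $N = R$ trivially has $\rank_R(R) = 1$, so both $M$ and $N$ have positive rank, and the requirement $c < 1 + 1/\rank_R(N)$ from the second case again reduces precisely to $c < 2$.

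In either case Corollary \ref{corExt2} applies and yields $\id_R(R) < \infty$, which is exactly the statement that $R$ is Gorenstein. There is no real obstacle here: the hypothesis $c < 2$ has been calibrated so that the two allowed scenarios translate cleanly into one of the three cases in condition (i) of Corollary \ref{corExt2} via $\drank_R(R) = 1$ or $\rank_R(R) = 1$, and the deduction is then formal.
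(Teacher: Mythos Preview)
Your proof is correct and follows exactly the approach the paper intends: Corollary~\ref{BU} is stated immediately after Corollary~\ref{corExt2} as a direct specialization with $N=R$, and your verification that the hypothesis $c<2$ matches the bound in~(i) via $\drank_R(R)=1$ (generically Gorenstein case) or $\rank_R(R)=1$ (positive-rank case) is precisely the intended justification.
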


The case where $M$ has positive rank in Corollary \ref{BU} recovers \cite[3.1]{Ul84}, while the generically Gorenstein case sharpens \cite[2.4]{JL07-E} (when $M$ is maximal Cohen-Macaulay). 

Corollary \ref{corExt2} is easily seen to be sharp, as one can take $R$ to be any Cohen-Macaulay local ring of multiplicity $2$ and take $M=R$. Then $\Ext^i_R(M,N)=0$ for all $i>0$ and any $R$-module $N$, but there are modules of infinite injective dimension, since $R$ has multiplicity $2$ and thus cannot be regular. It is more difficult to see that Corollary \ref{BU} is sharp, due to the fact that if $R$ is Cohen-Macaulay with $\e(R)=2$, then $R$ must be Gorenstein.  The existence of such a nonfree $M$ withnessing this sharpness is significant in light of the peviously mentioned results of Ulrich and others in the literature. We provide the following general construction for producing $c$-Ulrich modules in dimension $1$, for controlled values of $c$, which have $\Ext^1_R(M,R)=0$. Using this construction, we provide Example \ref{BUex} demonstrating the sharpness of Corollaries \ref{corExt2} and \ref{BU} in a strong way; we present a nonfree $2$-Ulrich module $M$ over a one-dimensional complete domain of minimal multiplicity with $\Ext^1_R(M,R)=0$, but with $R$ not Gorenstein. 

We recall that if $N$ is an $R$-module with minimal presentation $R^{\oplus b} \xrightarrow{A} R^{\oplus a} \rightarrow N \rightarrow 0$, the \emph{Auslander transpose} $\Tr_R(N)$ of $N$ is the $R$-module presented by the transpose matrix $A^T$. 

\begin{prop} \label{BUprop} Let $R$ be a one-dimensional Cohen-Macaulay local ring, $N$ be a maximal Cohen-Macaulay $R$-module with rank such that $\mu_R(N)=\rank_R(N)+1$, and let $M= \Tr_R(N) / \Gamma_{\fm}\Tr_R(N)$. Then $M$ is a nonzero maximal Cohen-Macaulay $R$-module such that $\e_R(M)=\e(R) \cdot (\beta_1^R(N)-1)$, $\mu_R(M)=\beta_1^R(N)\geq 2$, and $\Ext^1_R(M,R)=0$. Moreover, if $\e(R)\geq 4$, then $\e_R(M) \geq 2 \cdot \mu_R(M)$.
\end{prop}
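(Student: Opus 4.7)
The plan is to read off the claimed invariants of $M$ from the minimal free resolution of $N$ via the Auslander transpose and dualization. I would begin with the start of a minimal free resolution $F_1 \xrightarrow{d_1} F_0 \to N \to 0$, where $F_0 = R^{r+1}$ and $F_1 = R^{b_1}$ for $b_1 = \beta_1^R(N)$. Because $N$ has rank $r$, the first syzygy $\Omega_R^1 N$ has rank one, so the dualized sequence $F_0^* \xrightarrow{d_1^*} F_1^* \to \Tr_R N \to 0$ is still minimal, yielding $\mu_R(\Tr_R N) = b_1$ and $\rank_R(\Tr_R N) = b_1 - 1$. Setting $\Gamma = \Gamma_{\fm}(\Tr_R N)$ (which has finite length since $R$ is one-dimensional), the module $M = \Tr_R N / \Gamma$ is the largest torsion-free quotient of $\Tr_R N$ and satisfies $\rank_R M = b_1 - 1$.

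Next I would apply $\Hom_R(-,R)$ to the short exact sequence $0 \to \Gamma \to \Tr_R N \to M \to 0$. Since $\Gamma$ has finite length and $\depth R \geq 1$, $\Hom_R(\Gamma, R) = 0$, so the long exact sequence produces an isomorphism $M^* \cong (\Tr_R N)^*$ and an injection $\Ext^1_R(M,R) \hookrightarrow \Ext^1_R(\Tr_R N, R)$. Dualizing the presentation of $\Tr_R N$ identifies $(\Tr_R N)^* \cong \ker(d_1) = \Omega_R^2 N$, so $M^* \cong \Omega_R^2 N$. For the vanishing of $\Ext^1_R(\Tr_R N, R)$, I would invoke Auslander's exact sequence $0 \to \Ext^1_R(\Tr_R X, R) \to X \to X^{**}$ with $X = N$: the kernel of $N \to N^{**}$ is the torsion submodule of $N$, which vanishes as $N$ is MCM over the one-dimensional CM ring $R$. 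This immediately yields $\Ext^1_R(M,R) = 0$.

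The main obstacle is establishing $\mu_R(M) = b_1$. The sandwich $b_1 - 1 = \rank_R(M) \le \mu_R(M) \le \mu_R(\Tr_R N) = b_1$ leaves only two possibilities, and the difficulty is ruling out the smaller value. If $\mu_R(M) = b_1 - 1 = \rank_R(M)$, then $M$ is forced to be free: any minimal surjection $R^{b_1 - 1} \twoheadrightarrow M$ would have rank-zero kernel, which is also torsion-free as a submodule of a free module over a CM ring, and therefore zero. But then $M^* \cong \Omega_R^2 N$ would be free, so $\pd_R N \le 2 < \infty$, and Auslander-Buchsbaum over the one-dimensional CM ring $R$ would give $\pd_R N = 0$, contradicting $\mu_R(N) = \rank_R(N) + 1$. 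Hence $\mu_R(M) = b_1 = \beta_1^R(N)$; and if $b_1$ were $1$ then $M$ would be a cyclic torsion-free module of rank zero over $R$, hence zero, contradicting $\mu_R(M) = 1$. So $\beta_1^R(N) \ge 2$, $M$ is nonzero and MCM with $\e_R(M) = \rank_R(M) \cdot \e(R) = (\beta_1^R(N) - 1)\,\e(R)$ by the standard multiplicity formula, and when $\e(R) \ge 4$ we get $\e_R(M)/\mu_R(M) = (b_1 - 1)\e(R)/b_1 \ge \e(R)/2 \ge 2$.
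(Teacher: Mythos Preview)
Your proof is correct and follows essentially the same architecture as the paper's: compute $\rank_R(\Tr_R N)=b_1-1$ from the four-term exact sequence, get $\Ext^1_R(M,R)=0$ from the Auslander sequence $0\to\Ext^1_R(\Tr_R N,R)\to N\to N^{**}$, then sandwich $b_1-1\le\mu_R(M)\le b_1$ and rule out the lower value by showing it would force $M$ to be free.

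The one substantive difference is in that last step. The paper argues that if $M$ is free then the sequence $0\to L\to\Tr_R N\to M\to 0$ splits, so $\Ext^1_R(L,R)$ embeds in $\Ext^1_R(\Tr_R N,R)=0$; but a nonzero finite-length module over a ring of depth~$1$ has $\Ext^1_R(L,R)\ne 0$, a contradiction. You instead pass through the identification $M^*\cong(\Tr_R N)^*\cong\Omega^2_R N$: freeness of $M$ forces $\pd_R N<\infty$, and then Auslander--Buchsbaum (with $\depth_R N=\depth R=1$) gives $N$ free, contradicting $\mu_R(N)=\rank_R(N)+1$. Both arguments are short and standard; yours has the mild bonus of recording the useful isomorphism $M^*\cong\Omega^2_R N$, while the paper's avoids invoking Auslander--Buchsbaum. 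The paper also establishes $M\ne 0$ up front via the nonvanishing of $\Ext^1_R(L,R)$, whereas you deduce it afterward from $\mu_R(M)=b_1$ and the impossibility $b_1=1$; either order works.
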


\begin{proof} Set $a=\mu_R(N)$ and $b=\beta_1^R(N)$. Then there is an exact sequence $R^{\oplus b} \to R^{\oplus a} \to N \to 0$. This exact sequence yields the exact sequence $0 \to N^{\ast} \to R^{\oplus a} \to R^{\oplus b} \to \Tr_R N \to 0$. Hence we have $\rank_R(\Tr_R(N)) = \rank_R(N^{\ast})+(b-a)$. 
Set $L=\Gamma_{\fm}\Tr_R(N)$ and consider the short exact sequence $0 \to L \to \Tr_R N \to M \to 0$. Applying $\Hom_R(-, R)$ to the exact sequence, we obtain the exact sequence $L^* \to \Ext^1_R(M,R) \to \Ext^1_R(\Tr_R N, R)$, where $L^* := \Hom_R(L,R) =0$ as $\length_R(L)<\infty$. Also note that $\Ext^1_R(\Tr_R N, R)=0$ (as $\Ext^1_R(\Tr_R N, R)$ is the kernel of the canonical map $N \to N^{**}$). This forces $\Ext^1_R(M,R)=0$. Note that, since $\depth(R)=1$, $\Ext^1_R(L,R) \ne 0$; see \cite[1.2.10(e)]{BH93}. Hence $L \ncong \Tr_R(N)$ which implies $M \ne 0$, due to the exact sequence $0 \to L \to \Tr_R(N) \to M \to 0$. Moreover, since $\rank_R(\Tr_R(N))=\rank_R(L)+\rank_R(M)=\rank_R(M)$ and $\rank_R(N^{\ast})=\rank_R(N)$, we conclude that $$\rank_R(M)=\rank_R(N)+(b-a)=(a-1)+(b-a)=b-1.$$
Hence $\e_R(M)=\e(R) \cdot \rank_R(M)=\e(R) \cdot (b-1)$. Note that $b\geq 2$ since $0\neq M$ is torsion-free.

The surjections $R^{\oplus b} \to  \Tr_R(N)$ and $\Tr_R(N) \to M$ imply that $$b\geq \mu_R(\Tr_R(N)) \geq \mu_R(M) \geq \rank_R(M)=b-1.$$ Hence $\mu_R(M)$ is either $b-1$ or $b$. If $\mu_R(M)=b-1$, then $\mu_R(M)=\rank_R(M)$, which forces $M$ to be free; If $M$ is free then $\Tr_R(N) \cong L \oplus M$ and so the vanishing of $\Ext^1_R(\Tr_R N, R)$ forces $\Ext^1_R(L,R)=0$, contradicting $\Ext^1_R(L,R)\neq 0$. Therefore $\mu_R(M)=b$.
As $b\geq 2$, one can observe that, if $\e(R)\geq 4$, then $\e_R(M) = \e(R) \cdot (b-1) \geq 2 \cdot \mu_R(M)$.
\end{proof}

\begin{eg} \label{BUex} Let $R=k[\![t^3, t^4, t^5]\!]$. Then $R$ is a one-dimensional local domain with canonical module $\omega_R=(t^3, t^4)$ and $\e(R)=3$. It follows that $\mu_R(\omega_R)=2$ and $\beta_1^R(\omega_R)=3$. Set $N=\omega_R$ and $M=\Tr_R(N) / \Gamma_{\fm}\Tr_R(N)$. Then, by Proposition~\ref{BUprop}, $M$ is a maximal Cohen-Macaulay $R$-module such that $\Ext^1_ R(M,R)=0$ and $\e_R(M)=6$ and $\mu_R(M)=3$. So $M$ is $2$-Ulrich, but $R$ is not Gorenstein.
\end{eg}

Example \ref{BUex} also demonstrates that the vanishing of $\Ext^{d+1}_R(M,R)$ in \cite[6.8]{LM19} cannot be removed.

Proposition \ref{BUprop} allows us to obtain examples similar to \ref{BUex} whenever $R$ is a local domain such that $\e(R)\geq 4$ and $I$ is an ideal of $R$ such that $\mu_R(I)=2$.

\begin{eg} Let $R=k[\![t^5, t^6, t^8, t^9]\!]$ and let $M= \Tr_R(N) / \Gamma_{\fm}\Tr_R(N)$, where $N=(t^5, t^6)R$. Then $R$ is not Gorenstein and $M$ is a nonzero maximal Cohen-Macaulay $R$-module with $\Ext^1_R(M,R)=0$. Also one can check that $M$ is $(15/4)$-Ulrich since $\e_R(M)=\e(R) \cdot (\beta_1^R(N)-1)=5 \cdot(4-1)=15$ and $\mu_R(M)=\beta_1^R(N)=4$; see Proposition \ref{BUprop}.
\end{eg}

Next, we show if $c$ is smaller than a specific threshold, then the rigidity of a $c$-Ulrich module $M$ sharpens to an extreme, and $M$ can be used as a test module for the same homological dimensions detected by $k$. More precisely, we will show the following:

\begin{thm} \label{cUlrichtest} Let $R$ be a local ring and let $M$ be a $c$-Ulrich module for some $c<1+\frac{1}{v}$, where $v=\codim(R/\Ann_R(M))$. 
\begin{enumerate}[\rm(i)]
\item If $|k| = \infty$, then there is an $M$-regular sequence $\{\underline{x}\}\subseteq \fm$ such that $k$ is a direct summand of $M/\underline{x}M$. 
\item If $N$ is an $R$-module and $\Ext^{i}_R(N,M)=0$ for all $i=n, \ldots, n+\dim_R(M)$ for some $n\geq 0$, then $\pd_R(N)<\infty$.
\end{enumerate}
\end{thm}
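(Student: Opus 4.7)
The plan is to establish (i) first and derive (ii) as a formal consequence.

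For (i), since $|k|=\infty$, I would choose a minimal reduction $\underline{x}=x_1,\ldots,x_d$ of $\fm$ on $M$, where $d=\dim_R(M)$, satisfying the additional genericity requirement that the images of $x_1,\ldots,x_d$ in $\fm S/\fm^2 S$ are $k$-linearly independent, where $S=R/\Ann_R(M)$. Both conditions are Zariski open and jointly nonempty in $\fm^d$ since $|k|=\infty$. Setting $\bar S=S/(\underline{x})$ and $\bar M=M/\underline{x}M$, one verifies that $\bar S$ is Artinian local with $\embdim(\bar S)=\embdim(S)-d=v$, that $\underline{x}$ is $M$-regular (being a system of parameters for $S$ on the Cohen--Macaulay $S$-module $M$), and that
\[
\length_{\bar S}(\bar M)=\e_R(\underline{x},M)=\e_R(M)\le c\,\mu_R(M)<\bigl(1+\tfrac{1}{v}\bigr)\mu_{\bar S}(\bar M),
\]
by the Cohen--Macaulay property of $M$ combined with the fact that $\underline{x}$ is a reduction of $\fm$ on $M$.

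The problem thus reduces to the following Artinian assertion, which I expect to be the technical heart of the proof: \emph{if $(\bar S,\fm_{\bar S},k)$ is Artinian local with $\embdim(\bar S)=v$ and $\bar M$ is a finitely generated $\bar S$-module satisfying $\length_{\bar S}(\bar M)<(1+1/v)\,\mu_{\bar S}(\bar M)$, then $k$ is a direct summand of $\bar M$.} I would approach this via a Krull--Schmidt decomposition $\bar M=\bigoplus_j \bar M_j$ into indecomposables, reducing to the claim that every indecomposable summand $\bar M_j\neq k$ satisfies $\length(\bar M_j)/\mu(\bar M_j)\ge 1+1/v$; since the global ratio is a weighted mean of the local ones, this contradicts the hypothesis. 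For such an indecomposable $\bar M_j$, the socle must lie inside $\fm_{\bar S}\bar M_j$, for otherwise a socle element outside $\fm_{\bar S}\bar M_j$ would split off as a $k$-summand. The required inequality then comes from combining the $k$-bilinear multiplication pairing
\[
\phi\colon\fm_{\bar S}/\fm_{\bar S}^{\,2}\otimes_k\bar M_j/\fm_{\bar S}\bar M_j\longrightarrow\fm_{\bar S}\bar M_j/\fm_{\bar S}^{\,2}\bar M_j,
\]
whose adjoint $\Psi\colon\bar M_j/\fm_{\bar S}\bar M_j\to(\fm_{\bar S}\bar M_j/\fm_{\bar S}^{\,2}\bar M_j)^{v}$ has a large kernel when the target is small, with the embedding $\bar M_j\hookrightarrow E_{\bar S}(k)^{\dim_k\Soc(\bar M_j)}$ into a power of the injective hull of $k$, which controls $\mu(\bar M_j)$ in terms of $\dim_k\Soc(\bar M_j)$ and the type of $\bar S$. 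The main obstacle is the case $\fm_{\bar S}^{\,2}\bar M_j\neq 0$: the bilinear pairing alone produces classes $\bar m\in\bar M_j/\fm_{\bar S}\bar M_j$ whose lift $m$ only satisfies $\fm_{\bar S}m\subseteq\fm_{\bar S}^{\,2}\bar M_j$ rather than $\fm_{\bar S}m=0$, forcing one to iterate along the $\fm_{\bar S}$-adic filtration and carefully account for the Hilbert function of $\bar M_j$ against that of $\bar S$.

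For (ii), we first reduce to $|k|=\infty$ by the faithfully flat extension $R\to R(t):=R[t]_{\fm R[t]}$, which preserves the $c$-Ulrich property of $M$, the $\Ext$-vanishing on $N$, and $\pd_R(N)$. Applying (i), one obtains an $M$-regular sequence $\underline{x}$ of length $d=\dim_R(M)$ with $M/\underline{x}M\cong k\oplus T$ for some $R$-module $T$. For each $0\le j<d$, the short exact sequence $0\to M_j\xrightarrow{x_{j+1}}M_j\to M_{j+1}\to 0$, where $M_j=M/(x_1,\ldots,x_j)M$, induces a long exact sequence of $\Ext_R(N,-)$, so that the simultaneous vanishing of $\Ext^{i}_R(N,M_j)$ and $\Ext^{i+1}_R(N,M_j)$ forces $\Ext^{i}_R(N,M_{j+1})=0$. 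Starting from $\Ext^{i}_R(N,M)=0$ for $i=n,\ldots,n+d$ and iterating through $j=0,\ldots,d-1$, one obtains $\Ext^{n}_R(N,M/\underline{x}M)=0$; extracting the $k$-summand yields $\Ext^{n}_R(N,k)=0$, and since $\Ext^{n}_R(N,k)\cong k^{\beta_n^R(N)}$, we conclude $\beta_n^R(N)=0$, whence $\Omega^{n}_R(N)=0$ by Nakayama and $\pd_R(N)<n<\infty$.
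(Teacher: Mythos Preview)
Your overall architecture is right, and your argument for (ii) is complete and matches the paper's. The gap is in (i), precisely at the Artinian step you flag as ``the main obstacle.''

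You correctly reduce to showing: over an Artinian local ring $\bar S$ with $\mu_{\bar S}(\fm_{\bar S})=v$, any finite module $\bar M$ with $\length(\bar M)<(1+1/v)\,\mu(\bar M)$ has $k$ as a summand. Equivalently (and this is what you need for each indecomposable $\bar M_j\ne k$, though the Krull--Schmidt detour is unnecessary): if $\Soc(\bar M)\subseteq\fm_{\bar S}\bar M$, then $\mu(\bar M)\le v\cdot\length(\fm_{\bar S}\bar M)$. Your bilinear-pairing idea only gives $\mu(\bar M)\le v\cdot\dim_k(\fm_{\bar S}\bar M/\fm_{\bar S}^{2}\bar M)$, and the injective-hull embedding you mention bounds $\mu(\bar M)$ by $\type(\bar S)\cdot\dim_k\Soc(\bar M)$, which involves $\type(\bar S)$ rather than $v$ and does not combine with the pairing to yield what you want. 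The iteration along the $\fm_{\bar S}$-adic filtration you allude to does not go through, because the socle condition need not propagate to $\fm_{\bar S}^{i}\bar M$. As written, this step is not a proof.

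The paper closes this gap with a one-line cohomological trick that replaces your entire bilinear/filtration discussion. Apply $\Hom_{\bar S}(k,-)$ to $0\to\fm_{\bar S}\bar M\to\bar M\to\bar M/\fm_{\bar S}\bar M\to 0$. The hypothesis $\Soc(\bar M)\subseteq\fm_{\bar S}\bar M$ means $\Soc(\fm_{\bar S}\bar M)=\Soc(\bar M)$, so the map $\Hom_{\bar S}(k,\fm_{\bar S}\bar M)\to\Hom_{\bar S}(k,\bar M)$ is an isomorphism, whence
\[
k^{\mu(\bar M)}\cong\Hom_{\bar S}(k,\bar M/\fm_{\bar S}\bar M)\hookrightarrow\Ext^1_{\bar S}(k,\fm_{\bar S}\bar M).
\]
Combined with the elementary bound $\length(\Ext^1_{\bar S}(k,L))\le\beta_1^{\bar S}(k)\cdot\length(L)=v\cdot\length(L)$ for finite-length $L$ (proved by induction on $\length(L)$), this gives $\mu(\bar M)\le v\cdot\length(\fm_{\bar S}\bar M)$ directly, with no decomposition into indecomposables and no filtration argument.
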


In order to prove Theorem \ref{cUlrichtest}, we prepare a lemma which will also be used later to establish Theorem~\ref{Ulrichgrowtoseh}.

\begin{lemma}\label{lengthinq} Let $R$ be a local ring and let $M,N$ be $R$-modules with $\length_R(N)<\infty$.
\begin{enumerate}[\rm(i)]
\item We have $\length_R(\Ext^i_R(M,N)) \le \beta^R_i(M) \cdot \length_R(N)$ for all $i\geq 0$.
\item If $\mu_R(N) > \len_R(\m N) \cdot \mu_R(\m)$, then there is a socle element of $N$ which is a minimal generator of $N$, and hence $k$ is a direct summand of $N$.  
\end{enumerate}
\end{lemma}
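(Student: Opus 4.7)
For part (i), the plan is standard: apply $\Hom_R(-, N)$ to a minimal free resolution $\cdots \to F_i \to \cdots \to F_0 \to M \to 0$ with $F_i \cong R^{\oplus \beta^R_i(M)}$. The $i$-th term of the resulting complex is $\Hom_R(F_i, N) \cong N^{\oplus \beta^R_i(M)}$, of length $\beta^R_i(M) \cdot \length_R(N)$, and since $\Ext^i_R(M, N)$ is a subquotient of this term, the bound is immediate.

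For part (ii), the strategy is to prove the contrapositive: if $\Soc(N) \subseteq \fm N$, then $\mu_R(N) \le \mu_R(\fm) \cdot \length_R(\fm N)$. The key step is to apply $\Hom_R(k, -)$ to the short exact sequence
\[
0 \to \fm N \to N \to N/\fm N \to 0.
\]
Since $\fm$ annihilates $N/\fm N$, one has $\Hom_R(k, N/\fm N) \cong k^{\oplus \mu_R(N)}$, while $\Hom_R(k, N) = \Soc(N)$. The containment $\Soc(N) \subseteq \fm N$ forces the map $\Soc(N) \to k^{\oplus \mu_R(N)}$ in the long exact sequence to vanish, which yields an injection $k^{\oplus \mu_R(N)} \hookrightarrow \Ext^1_R(k, \fm N)$. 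Applying part (i) with $M = k$ (so that $\beta^R_1(k) = \mu_R(\fm)$) then gives $\mu_R(N) \le \length_R(\Ext^1_R(k, \fm N)) \le \mu_R(\fm) \cdot \length_R(\fm N)$, establishing the contrapositive.

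Under the hypothesis, we therefore find some $x \in \Soc(N) \setminus \fm N$, which is simultaneously a socle element and a minimal generator. To exhibit $k$ as a direct summand of $N$, I would complete $\{x\}$ to a minimal generating set $x_1 = x, x_2, \ldots, x_n$. The map $\pi \colon N \to k$ defined by $x_1 \mapsto 1$ and $x_i \mapsto 0$ for $i \geq 2$ is well-defined by minimality (any relation $\sum r_i x_i = 0$ with $r_1 \notin \fm$ would force $x_1$ to be redundant), and the socle condition $\fm x = 0$ makes the map $k \to N$ sending $1 \mapsto x$ a well-defined $R$-module homomorphism, splitting $\pi$. The crux of the whole argument---and where I expect the main conceptual obstacle---is the insight that $\Soc(N) \subseteq \fm N$ forces the connecting-map injection $k^{\oplus \mu_R(N)} \hookrightarrow \Ext^1_R(k, \fm N)$; once this is identified, the rest is bookkeeping.
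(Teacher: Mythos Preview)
Your proposal is correct. For part (ii) it is essentially the paper's argument: both apply $\Hom_R(k,-)$ to $0 \to \fm N \to N \to N/\fm N \to 0$ and extract the injection $k^{\oplus \mu_R(N)} \hookrightarrow \Ext^1_R(k,\fm N)$, then invoke part (i) with $M=k$. The paper phrases the contrapositive hypothesis as $\Soc(\fm N)=\Soc(N)$ and deduces that $\Hom_R(k,\fm N)\to\Hom_R(k,N)$ is an isomorphism by a dimension count, whereas you observe directly that $\Soc(N)\subseteq \fm N$ kills the map $\Soc(N)\to N/\fm N$; these are two ways of saying the same thing.

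For part (i) your route differs from the paper's. The paper inducts on $\length_R(N)$, using a filtration step $0\to k\to N\to C\to 0$ and additivity along the long exact sequence. Your argument is the more elementary one-liner: $\Ext^i_R(M,N)$ is a subquotient of $\Hom_R(F_i,N)\cong N^{\oplus\beta^R_i(M)}$, so its length is at most $\beta^R_i(M)\cdot\length_R(N)$. Both are entirely standard; yours is shorter and avoids any induction, while the paper's filtration approach would generalize more readily to additive invariants that are not obviously monotone under subquotients. You also spell out the splitting of $k$ off $N$ at the end, which the paper leaves implicit.
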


\begin{proof} (i) We prove the claimed inequality by induction on $\length_R(N)$. Fix $i\geq 0$. If  $\length_R(N)=1$, then $N \cong k$, and so we have $\length_R(\Ext^i_R(M,N))=\length_R(\Ext^i_R(M,k))=\beta^R_i(M)$.  Suppose $\length_R(N)>1$, and that the result holds for modules that have length less than the length of $N$. There is an exact sequence of $R$-modules of the form $0 \to k \to N \to C \to 0$.  By applying $\Hom_R(M,-)$ to this short exact sequence, we obtain: 
\begin{align*}
\length_R(\Ext^i_R(M,N)) &\le \displaystyle \length_R(\Ext^i_R(M,k))+\length_R(\Ext^i_R(M,C)) \\
&\le \displaystyle \beta^R_i(M)+\length_R(C) \cdot \beta^R_i(M) 
= \displaystyle \beta^R_i(M) \cdot (1+\length_R(C)) 
= \displaystyle \beta_i^R(M) \cdot \length_R(N). 
\end{align*} 

(ii) Suppose no minimal generator of $N$ is in $\Soc(N)$, hence $\Soc(\m N)=\Soc(N)$. Applying $\Hom_R(k,-)$ to the exact sequence $0 \to \m N \to N \to N/\m N \to 0$, we get an exact sequence
\[0 \to \Hom_R(k,\m N) \to \Hom_R(k,N) \to \Hom_R(k,N/\m N) \to \Ext^1_R(k,\m N).\]
As $\dim_k(\Hom_R(k,\m N))=\dim_k(\Soc(\m N))=\dim_k(\Soc(N))=\dim_k(\Hom_R(k,N))$, the first map is an isomorphism. Thus $\Hom_R(k,N/\m N) \hookrightarrow \Ext^1_R(k,\m N)$. Note that $\dim_k(\Hom_R(k,N/\m N))=\mu_R(N)$. Hence it follows from (i) that 
\[\mu_R(N) \le \length_R(\Ext^1_R(k,\m N)) \le \beta^R_1(k)\length_R(\m N)=\mu_R(\m) \cdot \length_R(\m N).
\qedhere
\] 
\end{proof}

The following explains the connection between Lemma \ref{lengthinq} and Theorem \ref{cUlrichtest}.

\begin{remark} Let $R$ be a Artinian local ring that is not a field. Set $v=\codim(R)=\mu_R(\fm)$. If $M$ is an $R$-module, then $\mu_R(M) > \len_R(\m M) \cdot v$ if and only if $\e_R(M)<\big(1+\frac{1}{v}\big) \cdot \mu_R(M)$ if and only if $M$ is $c$-Ulrich for some $c$ such that $c<1+\frac{1}{v} \le 2$.  
\end{remark}

\begin{proof}[Proof of Theorem \ref{cUlrichtest}]
(i) With $|k|=\infty$, we choose a system of parameters $\underline{x}$ of $M$ which generates a minimal reduction of the maximal ideal of $R/\Ann_R(M)$. Then $\underline{x}$ is $M$-regular and Lemma~\ref{lengthinq}(2) forces $k$ to be a direct summand of $M/\underline{x}M$.

(ii) If needed, we may extend the residue field and hence we may assume $|k|=\infty$. Then, by part (i), there is a regular sequence $\underline{x}$ on $M$ such that $k$ is a direct summand of $M/\underline{x}M$. The hypothesis on the vanishing of Ext shows $\Ext^{n}_R(N,M/\underline{x} M)=0$, hence $\Ext^{n}_R(N,k)=0$. So $\pd_R(N)<\infty$. 
\end{proof}

Theorem \ref{mainthm} gives a quick proof of  the following fact: if $R$ is a local ring and $N$ is an Ulrich $R$-module such that $\pd_R(N)<\infty$, then $R$ is regular. To see that, set $M=\Omega^n_R(k)$ for some $n\gg 0$. Then, as $\pd_R(N)<\infty$, $\Tor_i^R(M,N)=0$ for all $i\geq 1$. We see, by using the depth lemma, that $\dim_R(N)=\depth_R(N)=\depth_R(M\otimes_RN)\leq \dim_R(M\otimes_RN)\leq \dim_R(N)$.
So Theorem \ref{mainthm} implies that $M$ is free, and hence $R$ is regular.  Theorem \ref{cUlrichtest}(i) implies more, that any such module behaves in the same fashion with any suitable homological dimension. For instance we record the following:
\begin{cor} \label{HdimUlrich} Let $R$ be a local ring and let $M$ be a $c$-Ulrich module for some $c<1+\frac{1}{v}$ where $v=\codim(R/\Ann_R(M))$. If $M$ has finite projective or injective dimension (respectively, finite Gorenstein dimension), then $R$ is regular (respectively, $R$ is Gorenstein).
\end{cor}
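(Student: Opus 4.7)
The plan is to reduce the corollary directly to Theorem~\ref{cUlrichtest}(i) by transporting the relevant homological property of $M$ down to the residue field $k$. First I would arrange $|k| = \infty$ by passing to the faithfully flat local extension $R' := R[x]_{\mathfrak{m}R[x]}$ with $M' := M \otimes_R R'$. Being $c$-Ulrich, the value $v = \codim(R/\Ann_R M)$, and finiteness of $\pd$, $\id$, and G-dim are all preserved under this extension, as are regularity and the Gorenstein property of the ring; so we lose no generality. Then Theorem~\ref{cUlrichtest}(i) produces an $M$-regular sequence $\underline{x} \subseteq \fm$ such that $k$ is a direct summand of $\overline{M} := M/\underline{x}M$.

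Next I would handle the three homological dimensions in a uniform way, using that each of $\pd$, $\id$, and G-dim behaves well under factoring out an $M$-regular sequence. If $\pd_R(M) < \infty$, then $\pd_R(\overline{M}) = \pd_R(M) + |\underline{x}| < \infty$, so $\pd_R(k) < \infty$ (as $k$ is a summand of $\overline{M}$), forcing $R$ to be regular by Auslander--Buchsbaum--Serre. If $\id_R(M) < \infty$, then $\id_R(\overline{M}) = \id_R(M) - |\underline{x}| < \infty$, hence $\id_R(k) < \infty$; comparing Bass and Betti numbers via $\mu^i_R(\fm,k) = \dim_k \Ext^i_R(k,k) = \beta^R_i(k)$ yields $\pd_R(k) < \infty$, so $R$ is again regular. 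Finally, if $\G\text{-dim}_R(M) < \infty$, then by the Auslander--Bridger formula $\G\text{-dim}_R(\overline{M}) = \G\text{-dim}_R(M) + |\underline{x}| < \infty$, whence $\G\text{-dim}_R(k) < \infty$; by Auslander--Bridger this forces $R$ to be Gorenstein.

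The work is almost entirely front-loaded into Theorem~\ref{cUlrichtest}(i); once it is in hand, the proof is short and formal. The only mildly delicate step will be the residue field enlargement, where I would need to verify carefully that all of the invariants entering the hypothesis (the multiplicity $\e_R(M)$, the number of generators $\mu_R(M)$, and the codimension $v$) and all of the homological dimensions in question are genuinely invariant under the flat base change $R \to R'$. Nothing else should pose serious difficulty, since the implications ``$\pd_R(k) < \infty \Rightarrow R$ regular,'' ``$\id_R(k) < \infty \Rightarrow R$ regular,'' and ``$\G\text{-dim}_R(k) < \infty \Rightarrow R$ Gorenstein'' are standard.
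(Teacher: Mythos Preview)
Your approach is essentially identical to the paper's: extend the residue field, invoke Theorem~\ref{cUlrichtest}(i) to obtain $k$ as a direct summand of $M/\underline{x}M$, then use the standard characterizations of regularity and Gorensteinness via homological dimensions of $k$. The paper simply cites \cite[2.2.7, 3.1.26]{BH93} and \cite[17]{Ma00} in place of your more explicit deductions.

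Two small inaccuracies are worth fixing. First, the identity $\id_R(\overline{M}) = \id_R(M) - |\underline{x}|$ is not correct when $\underline{x}$ is only $M$-regular (not $R$-regular); from the exact sequences $0 \to M \xrightarrow{x_j} M \to M/x_jM \to 0$ one gets $\id_R(\overline{M}) \le \id_R(M)$, which is all you need. Second, the step $\G\text{-dim}_R(M) < \infty \Rightarrow \G\text{-dim}_R(\overline{M}) < \infty$ is not the Auslander--Bridger formula; it follows from the two-out-of-three property of finite Gorenstein dimension along the same short exact sequences. Neither slip affects the validity of the argument.
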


\begin{proof} We may assume, without lost of generality, that $|k|=\infty$. Hence Theorem \ref{cUlrichtest}(i) shows that $k$ is a direct summand of $M/\underline{x}M$ for a general $M$-regular sequence $\x \subseteq \fm$. Hence the claims follows from \cite[2.2.7]{BH93}, \cite[3.1.26]{BH93}, and \cite[17]{Ma00}.
\end{proof}
\begin{cor} \label{RigidUlrich} Let $R$ be a one-dimensional local domain and let $M$ and $N$ be $R$-modules such that $M$ is Ulrich and $\Ext^1_R(M,N)=0$. Then $\id_R(N)<\infty$. Therefore, if $M=N$, then $M$ is free and $R$ is regular.
\end{cor}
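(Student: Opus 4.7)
The plan is to deduce the first assertion from Corollary \ref{corExt1} applied in dimension $d=1$, and then to combine it with Corollary \ref{HdimUlrich} to obtain the ``therefore'' statement.

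For the first assertion, I would verify the hypotheses of Corollary \ref{corExt1} with $d=1$: (i) is given, and (iii) reduces to the single vanishing $\Ext^1_R(M,N)=0$, which is the standing assumption. Since $R$ is a one-dimensional domain and $M$ is Cohen-Macaulay, $\dim_R M\in\{0,1\}$. When $\dim_R M=1$, $M$ is maximal Cohen-Macaulay, so hypothesis (ii) holds with $M$, and $M$ is torsion-free over the domain $R$, so $\Supp_R(M)=\Spec(R)$ gives hypothesis (iv)(b); Corollary \ref{corExt1} then yields $\id_R(N)<\infty$. The only remaining case is $M\cong k^{\oplus n}$, by Example \ref{ulrichex}(ii). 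In that case $\Ext^1_R(k,N)=0$ says the Bass number $\mu^1(\fm,N)$ vanishes, while $\mu^1((0),N)$ vanishes trivially since $R_{(0)}$ is a field. Hence the degree-one term $E^1$ of the minimal injective resolution of $N$ is zero, forcing $\id_R(N)\le 0$.

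For the ``therefore'' part, assume $M=N$. First I rule out $M$ being of finite length: otherwise $M\cong k^{\oplus n}$ with $n\ge 1$, and $\Ext^1_R(k^{\oplus n},k^{\oplus n})\cong k^{n^2\,\embdim(R)}$ is nonzero since $\dim R=1$ implies $R$ is not a field, contradicting the hypothesis. So $M$ is maximal Cohen-Macaulay, and by the first assertion $\id_R(M)<\infty$. Corollary \ref{HdimUlrich} now forces $R$ to be regular, hence a DVR. Over a DVR every maximal Cohen-Macaulay (i.e.\ torsion-free) finitely generated module is free, so $M$ is free.

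The main technical snag I expect is the finite-length Ulrich case in the first assertion, which lies outside the reach of Corollary \ref{corExt1}: when $M=k^{\oplus n}$, the support of $M$ is $\{\fm\}$, so (iv)(b) fails, and $\Ext^2_R(k,N)$ is $\fm$-torsion, so (iv)(a) fails too. The ad hoc injective-resolution argument sketched above exploits that $R$ is a one-dimensional domain (so $\Spec R=\{(0),\fm\}$) and bridges this gap without any new machinery.
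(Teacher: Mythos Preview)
Your proof is correct and follows essentially the same route as the paper: both split into the cases $\dim_R M=0$ (where $M\cong k^{\oplus n}$ and the vanishing $\Ext^1_R(k,N)=0$ directly forces $\id_R(N)<\infty$) and $\dim_R M=1$ (where Corollary \ref{corExt1}(iv)(b) applies since $\Supp_R(M)=\Spec(R)$), and then invoke Corollary \ref{HdimUlrich} for the self-Ext statement after ruling out the finite-length case via $\Ext^1_R(k,k)\neq 0$. Your Bass-number justification in the $M\cong k^{\oplus n}$ case is a welcome elaboration of what the paper leaves as a one-line assertion.
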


\begin{proof} If $\depth_R(M)=0$, then $\length_R(M)<\infty$ so that $\mu_R(M)=\e_R(M)=\length_R(M)$ and hence $M \cong k^{\oplus n}$ for some $n\geq 1$. In that case, the vanishing of $\Ext^1_R(M,N)$ shows that $\id_R(N)<\infty$. Suppose $\depth_R(M)=1$. Then $M$ is maximal Cohen-Macaulay and so $M$ has positive rank. Therefore $\Supp_R(M)=\Spec(R)$ and hence Corollary \ref{corExt1} implies that $\id_R(N)<\infty$. 

If $\Ext^1_R(M,M)=0$, then $M$ is maximal Cohen-Macaulay since $\Ext^1_R(k,k)\neq 0$. Thus $\id_R(M)<\infty$ and Corollary \ref{HdimUlrich} shows that $R$ is regular, and hence $M$ is free.
\end{proof}


In \cite[2.1]{JL07} (cf. \cite[2.1]{JL07-E}), Jorgensen-Leuschke use a particular numerical deviation from a module being Ulrich to provide an upper bound on the growth of successive Betti numbers of an $R$-module $N$ under certain conditions on vanishing of $\Tor^R_i(M,N)$ or $\Ext^i_R(M,N^{\dagger})$. We provide a variation on their result by using this numerical deviation to provide a lower bound on the growth rate of the successive Betti numbers of $M$, with analogous conditions on the vanishing of $\Ext^i_R(M,N)$. In a sense, while their results are deduced from homological information, ours are cohomological in nature.

\begin{theorem}\label{Ulrichgrowtoseh} Let $R$ be a local ring and let $M$ and $N \neq 0$ be $R$-modules. Assume that $N$ is Cohen-Macaulay with $n = \dim(N)$.  If $\Ext^i_R(M,N)=0$ for $t \le i \le t+n$ for some $t\ge 1$, then 
$$\beta_{t+1}^R(M) \ge \dfrac{\mu_R(N)}{\e_R(N)-\mu_R(N)} \cdot \beta^R_t(M).$$
\end{theorem}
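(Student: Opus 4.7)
The plan is to induct on $n = \dim(N)$, reducing the positive-dimensional case to the zero-dimensional case by cutting down with a superficial regular element on $N$ (after a harmless faithfully flat residue-field extension to guarantee its existence).

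\textbf{Base case $n = 0$.} Then $\length_R(N) = \e_R(N) < \infty$ and the hypothesis becomes $\Ext^t_R(M,N) = 0$. Let $F_\bullet \to M$ be a minimal free resolution, and let $\partial_i \colon \Hom_R(F_i,N) \to \Hom_R(F_{i+1},N)$ denote the dualized differentials. Minimality of $F_\bullet$ together with $t \ge 1$ forces every matrix entry of every $\partial_i$ to lie in $\fm$, so $\im \partial_i \subseteq \fm \Hom_R(F_{i+1},N) \cong (\fm N)^{\oplus \beta_{i+1}^R(M)}$. Using $\ker \partial_t = \im \partial_{t-1}$ coming from $\Ext^t_R(M,N) = 0$, length counting on the complex gives
\[
\mu_R(N)\,\beta_t^R(M) \le \length_R(\Hom_R(F_t,N)) - \length_R(\ker \partial_t) = \length_R(\im \partial_t) \le (\e_R(N)-\mu_R(N))\,\beta_{t+1}^R(M),
\]
which rearranges to the stated inequality.

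\textbf{Inductive step.} Suppose $n \ge 1$ and the theorem holds in all smaller dimensions. Replacing $R$ by $R(y) = R[y]_{\fm R[y]}$ preserves Betti numbers of $M$, the values $\mu_R(N)$ and $\e_R(N)$, the Cohen-Macaulayness and dimension of $N$, and all vanishing of $\Ext$, so we may assume the residue field is infinite. Choose $x \in \fm$ that is simultaneously $N$-regular (possible because $\depth(N) = n \ge 1$) and $N$-superficial. Then $\bar N := N/xN$ is Cohen-Macaulay of dimension $n-1$ with $\mu_R(\bar N) = \mu_R(N)$ and $\e_R(\bar N) = \e_R(N)$. From the long exact sequence of $\Ext_R(M,-)$ applied to $0 \to N \xrightarrow{x} N \to \bar N \to 0$, the hypothesis on $N$ yields $\Ext^i_R(M, \bar N) = 0$ for $t \le i \le t+(n-1)$, and the inductive hypothesis applied to $\bar N$ gives exactly the desired inequality.

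\textbf{Main obstacle.} The delicate point is the inductive step: the element $x$ must be both $N$-regular (so the short exact sequence exists and the dimension drops) and $N$-superficial (so that $\e_R$ is exactly preserved, rather than merely satisfying an inequality, which would weaken the bound). This is what forces the residue-field extension at the outset; once that is in place, the base case reduces to the elementary observation that minimality squeezes the image of each dualized differential into $\fm$ times the codomain.
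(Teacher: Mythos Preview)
Your proof is correct and follows essentially the same strategy as the paper: extend the residue field, cut $N$ down by a regular sequence that preserves multiplicity, and in length zero exploit minimality of the resolution to trap the relevant image inside $(\fm N)^{\oplus \beta_{t+1}}$. The only differences are organizational: the paper kills all $n$ parameters at once via a minimal reduction of $\fm$ modulo $\Ann(N)$ rather than inducting one superficial element at a time, and in the zero-dimensional step the paper phrases your direct length count as an injection $\Ext^t_R(M,k^{\oplus \mu_R(N)}) \hookrightarrow \Ext^{t+1}_R(M,\fm\bar N)$ coming from the short exact sequence $0 \to \fm\bar N \to \bar N \to k^{\oplus \mu_R(N)} \to 0$ together with the bound $\length_R(\Ext^{t+1}_R(M,\fm\bar N)) \le \beta^R_{t+1}(M)\,\length_R(\fm\bar N)$ (their Lemma~\ref{lengthinq}(i)). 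Your observation that $\im\partial_{t-1}\subseteq (\fm N)^{\oplus\beta_t}$ and $\im\partial_t\subseteq (\fm N)^{\oplus\beta_{t+1}}$ is exactly the same content unpacked at the chain level.
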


\begin{proof} Extending the residue field if needed, we may suppose that the residue field $k$ is infinite. 
There exists an $N$-regular sequence $\x = x_1,\dots,x_n \in \fm$ such that the ideal $(\x) =(x_1,\dots,x_n)$ is a minimal reduction of $\m$ modulo $\Ann(N)$ \cite[Section 4.6]{BH93}.  Denote $\bar{N} = N/\x N$.  As $\x$ is a regular sequence on $N$, it follows that $\Ext^t_{R}(M,\bar{N})=0$ \cite[Proof of 1.2.4]{BH93}. Applying $\Hom_{R}(M,-)$ to the short exact sequence 
$0 \to \m\bar{N} \to \bar{N} \to k^{\oplus\mu_R(N)} \to 0$, 
we get an injection
$\Ext^t_{R}(M,k^{\oplus\mu_R(N)}) \hookrightarrow \Ext^{t+1}_{R}(M,\m\bar{N})$.  This, combined with Lemma~\ref{lengthinq}(i) gives us that 
\[\length_R(\Ext^t_{R}(M,k^{\oplus\mu_R(N)})) \le \length_R(\Ext^{t+1}_{R}(M,\m\bar{N})) \le \beta_{t+1}^{R}(M) \cdot \length_R(\m\bar{N})=\beta_{t+1}^R(M)\cdot (\e_R(N)-\mu_R(N)).\]
As $\length_R(\Ext^t_{R}(M,k^{\oplus\mu_R(N)}))=\beta^R_t(M)\cdot \mu_R(N)$, the result follows.
\end{proof}

Applying Theorem \ref{Ulrichgrowtoseh} when $N$ is the canonical module, we immediately recover a general result of Avramov; see \cite[4.2.7]{Av10}. 

\begin{cor}\label{canonicalUlrich}
Let $R$ be a Cohen-Macaulay local ring and let $M$ be a nonfree maximal Cohen-Macaulay $R$-module. Then $\beta_{i+1}^R(M) \ge \dfrac{r(R)}{\e(R)-r(R)}\cdot \beta^R_i(M)$ for all $i \ge 1$.  In particular, 
\begin{enumerate}[\rm(i)]
\item If $\e(R) \le 2r(R)$, then the Betti sequence of any module is nondecreasing after at most $d$ steps. 
\item If $\e(R)<2r(R)$, the the Betti sequence of any module of infinite projective dimension has exponential growth after at most $d$ steps.
\end{enumerate}
\end{cor}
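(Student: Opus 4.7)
The plan is to apply Theorem~\ref{Ulrichgrowtoseh} with $N=\omega_R$, the canonical module of $R$. Since the $\fm$-adic completion $R\to\widehat R$ is faithfully flat and preserves Betti numbers, multiplicity, type, and the properties of being maximal Cohen-Macaulay and nonfree, I would first reduce to the case where $R$ admits a canonical module $\omega_R$. The key numerical inputs are $\mu_R(\omega_R)=r(R)$ (immediate from the definition of type) and $\e_R(\omega_R)=\e(R)$. For the latter, the associativity formula gives $\e_R(\omega_R)=\sum_{\fp\in\Assh(R)}\len_{R_\fp}((\omega_R)_\fp)\cdot\e(R/\fp)$, and $\len_{R_\fp}((\omega_R)_\fp)=\len_{R_\fp}(\omega_{R_\fp})=\len_{R_\fp}(R_\fp)$ by Matlis duality over each Artinian localization $R_\fp$.

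Because $M$ is maximal Cohen-Macaulay, $\Ext^i_R(M,\omega_R)=0$ for every $i\ge 1$, so the vanishing hypothesis of Theorem~\ref{Ulrichgrowtoseh} is satisfied for every $t\ge 1$ (with $n=d$). The theorem then yields
\[
\beta^R_{t+1}(M)\;\ge\;\frac{r(R)}{\e(R)-r(R)}\,\beta^R_t(M)\qquad(t\ge 1),
\]
provided $\e(R)-r(R)>0$. If instead $\e(R)=r(R)$, then $\omega_R$ would be Ulrich, forcing $R$ to be regular by Example~\ref{ulrichex}(v) and contradicting the existence of a nonfree maximal Cohen-Macaulay module. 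Hence $\e(R)>r(R)$ and the main inequality is established.

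For parts (i) and (ii), the strategy is a standard syzygy reduction. Given an arbitrary $R$-module $L$ of infinite projective dimension, the syzygy $L':=\Omega^d_R(L)$ is maximal Cohen-Macaulay (by the depth lemma) and nonfree, so the main inequality applies to $L'$. Shifting via $\beta^R_j(L')=\beta^R_{j+d}(L)$ then gives
\[
\beta^R_{i+1}(L)\;\ge\;\frac{r(R)}{\e(R)-r(R)}\,\beta^R_i(L)\qquad(i\ge d+1).
\]
When $\e(R)\le 2r(R)$ the ratio is $\ge 1$, so the Betti sequence of $L$ is nondecreasing past step $d$; when $\e(R)<2r(R)$ the ratio strictly exceeds $1$, forcing exponential growth of the Betti numbers.

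I anticipate no serious obstacle to this plan: the argument is essentially a direct application of Theorem~\ref{Ulrichgrowtoseh} combined with two standard facts about the canonical module ($\mu_R(\omega_R)=r(R)$ and $\e_R(\omega_R)=\e(R)$) and the vanishing $\Ext^{\ge 1}_R(M,\omega_R)=0$ for maximal Cohen-Macaulay $M$. The only mild subtlety is ruling out the degenerate equality $\e(R)=r(R)$, which is immediate from Example~\ref{ulrichex}(v) given that a nonfree maximal Cohen-Macaulay module exists.
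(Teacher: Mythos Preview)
Your proposal is correct and follows essentially the same approach as the paper: complete to obtain a canonical module, use $\mu_R(\omega_R)=r(R)$ and $\e_R(\omega_R)=\e(R)$, and apply Theorem~\ref{Ulrichgrowtoseh} with $N=\omega_R$. Your version is in fact more detailed than the paper's (which simply records these two equalities and invokes the theorem), since you explicitly justify the Ext vanishing, handle the degenerate case $\e(R)=r(R)$, and spell out the syzygy reduction for parts (i) and (ii).
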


\begin{proof} We may suppose $R$ is complete so that it admits a canonical module $\w$.  As $\e_R(\w)=\e(R)$ and $\mu_R(\w)=r(R)$, the result follows from Theorem \ref{Ulrichgrowtoseh}.
\end{proof}

\begin{example} If $R=k[\![x,y]\!]/(x^2,xy,y^2)$, then $\mu_R(\w)=r(R)=2$ and $\e(\w)=\length(R)=3$ so $\w$ is $(3/2)$-Ulrich. Applying Corollary \ref{canonicalUlrich} is one (of many) ways to see that every nonfree module over $R$ has a Betti sequence that grows exponentially.
\end{example}

\section{Additional Applications}

In this section we record several additional consequences of our main theorems. We also provide additional examples to demonstrate their sharpness.

\begin{cor} \label{corR} Let $R$ be a local ring and let $M$ and $N$ be nonzero $R$-modules such that either $M$ is faithful or $N$ has rank. If $M$ is Ulrich and $M\otimes_RN$ is maximal Cohen-Macaulay, then $N$ is free.
\end{cor}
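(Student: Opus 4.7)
The plan is to deduce Corollary~\ref{corR} as a direct specialization of Theorem~\ref{mainthm} with $I=\m$. The approach reduces to verifying the shared hypotheses of Theorem~\ref{mainthm} and then invoking either case~(i) or case~(ii)(a), depending on whether $N$ has rank or $M$ is faithful.

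First I would verify the shared hypotheses. The quotients $M/\m M$ and $N/\m N$ are $k$-vector spaces, hence free over $R/\m$. Since $M\otimes_R N$ is maximal Cohen-Macaulay we have $\dim_R(M\otimes_R N)=\dim R$; combined with the standard bound $\dim_R(M\otimes_R N)\le\dim_R(M)\le\dim R$, this forces $\dim_R(M)=\dim_R(M\otimes_R N)=\dim R$, and in particular $M$ is maximal Cohen-Macaulay (being Cohen-Macaulay by the Ulrich hypothesis). The Ulrich condition then yields $\e_R(\m,M)=\mu_R(M)=\length_R(M/\m M)$, which is the last of the shared hypotheses of Theorem~\ref{mainthm}.

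If $M$ is faithful, I would invoke Theorem~\ref{mainthm}(ii)(a); the required unmixedness of $M$ is automatic because every associated prime of a Cohen-Macaulay module has dimension equal to $\dim_R(M)$, so $\Ass_R(M)=\Assh_R(M)$. If instead $N$ has rank $r$, I would invoke Theorem~\ref{mainthm}(i); here the key observation is that any $\fp\in\Assh_R(M)$ must be a minimal prime of $R$, since any minimal prime $\fq\subseteq\fp$ would satisfy $\dim(R/\fq)\ge\dim(R/\fp)=\dim R$, forcing $\fq=\fp$. Thus $\fp\in\Ass_R(R)$; the rank hypothesis then gives $N_\fp\cong R_\fp^{\oplus r}$, so $\Tor_1^R(M,N)_\fp=0$, and Theorem~\ref{mainthm}(i) applies.

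I do not anticipate any substantive obstacle here; the proof is essentially a bookkeeping exercise to align the hypotheses of Theorem~\ref{mainthm} in the two cases. The only subtlety worth flagging is the implicit use of the standard structure theory of associated primes of maximal Cohen-Macaulay modules, which underpins both cases by guaranteeing that $\Ass_R(M)=\Assh_R(M)\subseteq\Ass_R(R)$.
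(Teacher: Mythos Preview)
Your proposal is correct and follows essentially the same route as the paper: verify that $M$ is maximal Cohen-Macaulay so that $\dim_R(M)=\dim_R(M\otimes_R N)$ and $\Assh_R(M)\subseteq\Ass(R)$, then apply Theorem~\ref{mainthm}(ii)(a) when $M$ is faithful and Theorem~\ref{mainthm}(i) when $N$ has rank. Your write-up is in fact a bit more explicit than the paper's in checking the shared hypotheses and the unmixedness of $M$.
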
 

\begin{proof} Note that, since $M$ is Cohen-Macaulay and $M\otimes_RN$ is maximal Cohen-Macaulay, $M$ is also maximal Cohen-Macaulay. So $\dim_R(M\otimes_RN)=\dim(R)=\dim_R(M)$ and this implies that $\Assh(M)\subseteq \Min(R) \subseteq \Ass(R)$.

In the case where $M$ is faithful, Theorem \ref{mainthm}(ii)(a) shows that $N$ is free.

Next assume $N$ has rank. Then $\Tor_1^R(M,N)_{\fp}=0$  for each $\fp \in \Assh(M)$. Thus Theorem~\ref{mainthm}(i) shows that $N$ is free. 
\end{proof}

\begin{cor} \label{corMM} Let $R$ be a $d$-dimensional Cohen-Macaulay local ring that has minimal multiplicity and let $M$ and $N$ be $R$-modules, either of which has rank. If $\Omega^n_R (M)\otimes_R N$ is nonzero and maximal Cohen-Macaulay for some $n\geq d+1$, then $N$ is free. 
\end{cor}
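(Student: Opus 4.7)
The plan is to reduce to Corollary~\ref{corR} and Theorem~\ref{mainthm} by exploiting the fact that, over a Cohen-Macaulay ring of minimal multiplicity, high enough syzygies are automatically Ulrich.

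First I would observe that, since $R$ is Cohen-Macaulay of dimension $d$ and $n\geq d+1$, the syzygy $\Omega^n_R(M)$ taken from a minimal free resolution of $M$ is either zero or maximal Cohen-Macaulay; because $\Omega^n_R(M)\otimes_R N\neq 0$ by hypothesis, $\Omega^n_R(M)$ is in fact nonzero and maximal Cohen-Macaulay. Applying Example~\ref{ulrichex}(iv) to the maximal Cohen-Macaulay module $\Omega^d_R(M)$ with $i=n-d\geq 1$, the hypothesis that $R$ has minimal multiplicity yields that $\Omega^n_R(M)=\Omega^{n-d}_R(\Omega^d_R(M))$ is Ulrich. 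Set $M':=\Omega^n_R(M)$, so that $M'$ is a nonzero Ulrich module, $M'\otimes_R N$ is maximal Cohen-Macaulay, and $N\neq 0$.

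Next I would split into the two cases allowed by the hypothesis. If $N$ has rank, then Corollary~\ref{corR} applied to the pair $(M',N)$ immediately gives that $N$ is free. If instead $M$ has rank, then from the minimal free resolution of $M$ and the additivity of rank on short exact sequences of modules with rank, $M'=\Omega^n_R(M)$ also has rank. Moreover, since $M'$ is a nonzero maximal Cohen-Macaulay module over the Cohen-Macaulay local ring $R$, every associated prime of $M'$ must have dimension $d$ and therefore be a minimal prime of $R$; in particular $M'$ is not torsion, so $\rank_R(M')>0$. Since $R$ is Cohen-Macaulay it is unmixed, and Theorem~\ref{mainthm}(iii) applied to $(M',N)$ then gives that $N$ is free.

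There is no real obstacle to this argument: it is essentially a direct combination of Example~\ref{ulrichex}(iv) with Corollary~\ref{corR} and Theorem~\ref{mainthm}(iii). The only mildly delicate point is the case where $M$ (rather than $N$) carries the rank hypothesis; there one must verify that $\Omega^n_R(M)$ inherits a \emph{positive} rank, which follows from its being a nonzero maximal Cohen-Macaulay module over the Cohen-Macaulay (hence unmixed) ring $R$.
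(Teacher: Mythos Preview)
Your proof is correct and follows essentially the same approach as the paper: show that $\Omega^n_R(M)$ is Ulrich (the paper cites \cite[1.6]{KT19}, which is the content of Example~\ref{ulrichex}(iv)) and then appeal to the main results. The only cosmetic difference is that in the case where $M$ has rank you invoke Theorem~\ref{mainthm}(iii) directly, while the paper routes everything through Corollary~\ref{corR}; your version is in fact more explicit about why that case goes through.
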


\begin{proof} Note that $\Omega^{n-1}_R(M)$ is maximal Cohen-Macaulay so that $\Omega^n_R(M) =\Omega^1_R (\Omega^{n-1}_R (M))$ is Ulrich \cite[1.6]{KT19}. Hence the result follows from Corollary \ref{corR}.
\end{proof}

As a consequence, we show that the Huneke-Wiegand conjecture (see \cite[page~473]{HW94}) holds true for Ulrich modules, or over rings with minimal multiplicity. The minimal multiplicity case recovers a result of Huneke-Iyengar-Wiegand, obtained through different methods \cite[3.5]{HI19}.

\begin{cor}\label{hunwie} Let $R$ be a one-dimensional Cohen-Macaulay local ring and let $M$ be a torsion-free $R$-module such that $M$ has rank and $M\otimes_RM^{\ast}$ is torsion-free. If $R$ has minimal multiplicity or $M$ is Ulrich, then $M$ is free. In fact, $R$ is regular when $M$ is Ulrich.
\end{cor}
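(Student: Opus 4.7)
The plan is to show in each case that the dual $M^{\ast}=\Hom_R(M,R)$ is free of rank $r$, and then to promote this to the freeness of $M$ itself via a biduality argument. As a preliminary, since $M$ is torsion-free of positive rank $r$ over the one-dimensional Cohen-Macaulay ring $R$, both $M$ and $M^{\ast}$ are maximal Cohen-Macaulay. The tensor product $M\otimes_R M^{\ast}$ has rank $r^{2}>0$, so the torsion-free hypothesis forces it to be maximal Cohen-Macaulay as well.

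For the Ulrich case, I first apply Corollary~\ref{corR} with the Ulrich module $M$ and $N=M^{\ast}$ (which has rank $r$) to conclude that $M^{\ast}\cong R^{r}$, and hence $M^{\ast\ast}\cong R^{r}$, are free. Since $M$ is torsion-free, the natural map $M\to M^{\ast\ast}$ is injective with finite-length cokernel $T$. I then dualize the sequence $0\to M\to M^{\ast\ast}\to T\to 0$: since $T$ has finite length while $R$ has depth one, $\Hom_R(T,R)=0$; and the standard triangle identity expressing the naturality of biduality (i.e.\ that $M^{\ast}\to M^{\ast\ast\ast}$ is a right inverse to the dual of $M\to M^{\ast\ast}$) forces $M^{\ast\ast\ast}\to M^{\ast}$ to be a split surjection. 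Thus $\Ext^{1}_R(T,R)=0$, which forces $T=0$ by \cite[1.2.10(e)]{BH93}. Therefore $M\cong M^{\ast\ast}\cong R^{r}$ is free. For the ``in fact'' statement, $M\cong R^{r}$ being Ulrich yields $r=\mu_R(M)=\e_R(M)=r\cdot\e(R)$, hence $\e(R)=1$ and $R$ is regular.

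For the minimal multiplicity case, suppose $M$ is not free; I aim for a contradiction. By Example~\ref{ulrichex}(iv), the syzygy $\Omega^{1}_R M$ is then a nonzero Ulrich module. The strategy is to apply Corollary~\ref{corR} to the Ulrich module $\Omega^{1}_R M$ and $N=M^{\ast}$ to deduce that $M^{\ast}$ is free, after which the biduality argument above completes the proof. Tensoring the syzygy sequence $0\to\Omega^{1}_R M\to R^{\mu_R(M)}\to M\to 0$ with $M^{\ast}$ and applying the depth lemma to the maximal Cohen-Macaulay modules $(M^{\ast})^{\mu_R(M)}$ and $M\otimes_R M^{\ast}$ shows that $\Omega^{1}_R M\otimes_R M^{\ast}$ differs from a maximal Cohen-Macaulay module by the finite-length module $\Tor_1^R(M,M^{\ast})$.

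The main obstacle I foresee is the vanishing of $\Tor_1^R(M,M^{\ast})$ in the minimal multiplicity case, which is equivalent to $\Omega^{1}_R M\otimes_R M^{\ast}$ being maximal Cohen-Macaulay. Once this is secured, Corollary~\ref{corR} applied to the Ulrich module $\Omega^{1}_R M$ and the rank-carrying module $M^{\ast}$ yields that $M^{\ast}$ is free, and the argument concludes exactly as in the Ulrich case. I expect this Tor-vanishing to follow from a rigidity phenomenon driven by the minimal multiplicity hypothesis together with the torsion-freeness of $M\otimes_R M^{\ast}$, since at the minimal primes of $R$ the Tor vanishes automatically (as $M^{\ast}$ is free there), leaving only a finite-length obstruction that the Ulrich hypothesis on $\Omega^{1}_R M$ should kill.
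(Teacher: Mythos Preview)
Your treatment of the Ulrich case is correct and matches the paper's: apply Corollary~\ref{corR} to obtain that $M^{\ast}$ is free, then conclude $M$ is free via biduality. The paper cites \cite[2.13]{CG19} for that last step, and your explicit triangle-identity argument is essentially a proof of that citation.

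The minimal multiplicity case, however, has a genuine gap. You reduce to the vanishing of $\Tor_1^R(M,M^{\ast})$ and then only \emph{expect} it to hold, appealing vaguely to ``a rigidity phenomenon''. But torsion-freeness of $M\otimes_R M^{\ast}$ does not by itself force $\Tor_1^R(M,M^{\ast})=0$; that implication is precisely the hard content of Huneke--Wiegand type results and is not available in this generality. Nothing in the paper supplies such a rigidity statement for minimal multiplicity rings, and your proposed mechanism (``the Ulrich hypothesis on $\Omega^1_R M$ should kill it'') is circular: you would need $\Omega^1_R M\otimes_R M^{\ast}$ to be maximal Cohen-Macaulay before you can invoke anything about the Ulrich module $\Omega^1_R M$.

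The paper avoids this obstacle by reversing the roles. Rather than taking a syzygy of $M$, it uses that $M^{\ast}$, being a dual, is always a second syzygy up to free summands: $M^{\ast}\cong\Omega^2_R(A)\oplus R^{\oplus n}$ for some $A$. If $\Omega^2_R(A)=0$ then $M^{\ast}$ is free and your biduality argument finishes. Otherwise $M\otimes_R\Omega^2_R(A)$ is a nonzero direct summand of the torsion-free module $M\otimes_R M^{\ast}$, hence is itself maximal Cohen-Macaulay, and Corollary~\ref{corMM} (with $n=2\ge d+1$ and $N=M$, which has rank) yields directly that $M$ is free. No Tor-vanishing is ever needed, because the relevant tensor product is already a summand of the module assumed to be torsion-free.
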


\begin{proof} Note that $M\otimes_RM^{\ast}$ is maximal Cohen-Macaulay. If $M$ is Ulrich, then Corollary~\ref{corR} yields that $M^*$ is free. Since $\depth(R)=1$ and $M$ is torsion-free, we see that $M$ is free by \cite[2.13]{CG19}. 

Next, assume that $R$ has minimal multiplicity. Note that $M^* \cong \Omega^2_R(A) \oplus R^{\oplus n}$ for some $R$-module $A$ and $n \geq 0$. If $\Omega^2_R(A) = 0$, then $M^*$ is free and hence $M$ is free by \cite[2.13]{CG19}. If $\Omega^2_R(A) \neq 0$, then we can apply Corollary~\ref{corMM} to $M\otimes_R \Omega^2_R(A)$ to conclude that $M$ is free. 
\end{proof}

In the following observation $\torsion(M)$ denotes the torsion submodule of the module $M$. 

\begin{lem} \label{lemHW1} Let $R$ be a local ring and let $M$ be a Cohen-Macaulay $R$-module. If $M/\torsion(M)$ is a nonzero free $R$-module, then $M$ is free.
\end{lem}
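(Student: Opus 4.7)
The plan is to exploit the freeness of $M/\torsion(M)$ to split off a free summand of $M$, and then to rule out a nontrivial torsion part by using that a Cohen-Macaulay module whose dimension equals that of the ambient ring is unmixed.

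First, since $M/\torsion(M) \cong R^{\oplus n}$ with $n \geq 1$ is free, hence projective, the canonical short exact sequence
\[
0 \to \torsion(M) \to M \to R^{\oplus n} \to 0
\]
splits, yielding $M \cong \torsion(M) \oplus R^{\oplus n}$. It then suffices to show $\torsion(M) = 0$.

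Next I would extract the relevant Cohen-Macaulay data. Because $R^{\oplus n}$ is a nonzero direct summand of $M$, we have $\dim_R(M) = \dim(R) = d$, and the Cohen-Macaulayness of $M$ together with the depth formula for direct sums forces $\depth(R) = d$; that is, $R$ is Cohen-Macaulay of dimension $d$ and $M$ is maximal Cohen-Macaulay. The same formula also gives $\depth(\torsion(M)) \geq d$, so if $\torsion(M) \neq 0$, then combined with $\dim_R(\torsion(M)) \leq d$ we see that $\torsion(M)$ itself is maximal Cohen-Macaulay.

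The crux is now a contradiction from the assumption $\torsion(M) \neq 0$. Any maximal Cohen-Macaulay module over a Cohen-Macaulay local ring is unmixed, so every $\fq \in \Ass_R(\torsion(M))$ satisfies $\dim(R/\fq) = d$, which over the Cohen-Macaulay ring $R$ forces $\fq \in \Min(R) \subseteq \Ass(R)$. In particular $\fq$ consists entirely of zerodivisors of $R$. On the other hand, $\fq = \Ann_R(m)$ for some nonzero $m \in \torsion(M)$, and by definition of torsion, $\Ann_R(m)$ must contain a nonzerodivisor of $R$. This contradiction forces $\torsion(M) = 0$, hence $M \cong R^{\oplus n}$ is free.

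The only real obstacle is the final step, and it amounts to the standard observation that a torsion element produces an associated prime containing a nonzerodivisor, which is incompatible with the unmixedness forced by Cohen-Macaulayness; the earlier splitting and depth computation are essentially bookkeeping.
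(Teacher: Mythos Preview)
Your proof is correct and follows essentially the same approach as the paper: split off the free summand via projectivity of $R^{\oplus n}$, then derive a contradiction from the Cohen--Macaulay hypothesis if $\torsion(M)\neq 0$. The paper compresses the contradiction into the single inequality chain
\[
\dim(R)-1 \ge \dim_R(\torsion(M)) \ge \depth_R(\torsion(M)) \ge \depth_R(M)=\dim_R(M)=\dim(R),
\]
whereas you unpack it as ``$\torsion(M)$ would be maximal Cohen--Macaulay, hence unmixed, forcing its associated primes into $\Min(R)$, contradicting that each contains a nonzerodivisor.'' These are the same argument in slightly different packaging; your version makes the implicit deduction that $R$ is Cohen--Macaulay explicit, which is needed to justify the first inequality in the paper's chain.
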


\begin{proof} We have an exact sequence $0 \to \torsion(M) \to M \to R^{\oplus n} \to 0$ for some $n > 0$, which implies that $M \cong R^{\oplus n} \oplus  \torsion(M)$. Suppose $ \torsion(M) \neq 0$. Then it follows that \[\dim(R)-1\geq \dim_R(\torsion(M)) \geq \depth_R(\torsion(M)) \geq \depth_R(M)=\dim_R(M)=\dim(R),\] a contradiction. Consequently, $\torsion(M)=0$, and hence we have $M \cong R^{\oplus n}$.
\end{proof}

\begin{cor} \label{corHW1} Let $R$ be a local ring and let $M$ be an Ulrich $R$-module such that $M\otimes_RM^{\ast}$ is Cohen-Macaulay. Assume at least one of the following conditions holds:
\begin{enumerate}[\rm(i)]
\item $M$ is faithful.
\item $\Tor_1^R(M,M^{\ast})$ is torsion and $\Supp_R(M)=\Spec(R)$
\end{enumerate}
Then $M^{\ast}$ is free. Moreover, if $\depth(R)\leq 1$, then $M$ is free and $R$ is regular.
\end{cor}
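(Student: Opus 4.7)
The plan is to apply Theorem \ref{mainthm}(ii) with $N = M^{\ast}$. The conditions that $M/\fm M$ and $M^{\ast}/\fm M^{\ast}$ be free over $R/\fm = k$ are automatic, and the Ulrich hypothesis supplies $\e_R(M) = \length_R(M/\fm M)$. In both cases (i) and (ii) of the corollary, $\Supp_R(M) = \Spec(R)$ (in (i) because $\Ann_R(M) = 0$; directly in (ii)), so $M$ is maximal Cohen-Macaulay of dimension $d = \dim(R)$ and hence, being Cohen-Macaulay, unmixed. We may assume $M^{\ast} \ne 0$, since otherwise the first conclusion is immediate.

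The remaining hypothesis of Theorem \ref{mainthm} to be verified is the dimension equality $\dim_R(M \otimes_R M^{\ast}) = d$. Since $\Supp_R(M) = \Spec(R)$, we have $\Supp_R(M \otimes_R M^{\ast}) = \Supp_R(M^{\ast})$, so the claim reduces to showing that $M^{\ast}$ is supported at some minimal prime of $R$; this follows from the Cohen-Macaulay assumption on $M \otimes_R M^{\ast}$ together with the fact that $M^{\ast}$ is torsionless. With the hypotheses in place, case (i) then follows from Theorem \ref{mainthm}(ii)(a) (using that $M$ is faithful and unmixed), and case (ii) follows from Theorem \ref{mainthm}(ii)(c) (using that $\Tor_1^R(M, M^{\ast})$ is torsion and $\Supp_R(M) = \Spec(R)$). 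Either way, $M^{\ast}$ is free.

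For the moreover, assume $\depth(R) \le 1$. A socle argument using $\fm \in \Ass_R(R)$ and $\depth_R(M) = \dim(R)$ shows that $\depth(R) = 0$ together with $\dim(R) \ge 1$ is incompatible with our hypotheses (for instance, in case (i), any $0 \ne x \in \Soc_R(R)$ yields $0 \ne xm \in \Soc_R(M)$ for some $m \in M$, contradicting $\depth_R(M) \ge 1$). This reduces us to either $R$ a field (where $M$ is trivially free) or $\depth(R) = \dim(R) = 1$, so that $R$ is a one-dimensional Cohen-Macaulay local ring and $M$ is maximal Cohen-Macaulay with $\depth_R(M) = 1$, hence torsion-free. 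In the latter case, the freeness of $M^{\ast}$ combined with torsion-freeness of $M$ forces $M$ to be free, exactly as in the proof of Corollary \ref{hunwie} via \cite[2.13]{CG19}. Finally, $M$ being a free Ulrich $R$-module makes $R$ an Ulrich module over itself, so $\e(R) = 1$ and $R$ is regular.

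The principal technical step is the verification of the dimension equality $\dim_R(M \otimes_R M^{\ast}) = d$, which is needed to invoke Theorem \ref{mainthm} since the corollary only assumes $M \otimes_R M^{\ast}$ is Cohen-Macaulay rather than maximal Cohen-Macaulay; the remainder of the argument is largely bookkeeping, matching our two hypotheses to sub-cases (a) and (c) of Theorem \ref{mainthm}(ii), together with the degenerate analysis for the moreover.
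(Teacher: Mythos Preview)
Your first paragraph matches the paper's approach: both apply Theorem~\ref{mainthm}(ii) with $N=M^*$, and your observation that $\Supp_R(M)=\Spec(R)$ forces $M$ to be maximal Cohen--Macaulay (hence unmixed) is exactly what is needed. Your justification of $\dim_R(M\otimes_R M^*)=d$ via ``torsionless plus Cohen--Macaulay'' is a bit vague; the clean reason is that for every $\fp\in\Min(R)$ the ring $R_\fp$ is Artinian local and $M_\fp\ne 0$, so $(M^*)_\fp=\Hom_{R_\fp}(M_\fp,R_\fp)\ne 0$. The paper simply asserts this equality, so your extra care is welcome, but the stated reason should be sharpened.

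For the ``moreover'' your route diverges from the paper and, as written, has a gap in case~(ii). Your socle argument (``$0\ne x\in\Soc(R)$ gives $0\ne xm\in\Soc(M)$'') uses faithfulness of $M$; in case~(ii) you only have $\Supp_R(M)=\Spec(R)$, i.e.\ $\Ann_R(M)$ is nilpotent, and there is no reason a given socle element of $R$ must act nontrivially on $M$. For instance, over $R=k[\![x,y]\!]/(x^2,xy)$ the module $M=R/(x)$ is Ulrich, maximal Cohen--Macaulay, and fully supported, yet $xM=0$ while $0\ne x\in\Soc(R)$; so $\depth(R)=0$, $\dim(R)=1$ is not ruled out by your argument alone. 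Likewise, your reduction ``$\dim(R)=0\Rightarrow R$ a field'' is only justified in case~(i). Both gaps are repairable once you use the already-proved freeness of $M^*$: if $M^*\cong R^n$ with $n\ge 1$, choose $\psi\in M^*$ generating a free direct summand, so $\Ann_R(\psi)=0$; then for $0\ne z\in\Soc(R)$ one has $z\psi\ne 0$, hence $\psi(zm)\ne 0$ for some $m$, giving $0\ne zm\in\Soc(M)$ and contradicting $\depth_R(M)\ge 1$. A similar use of $M^*$ free handles the Artinian case.

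The paper avoids this case analysis entirely. It passes to $\overline{M}=M/\torsion(M)$, notes $\overline{M}^{\,*}\cong M^*$ is free, applies \cite[2.13]{CG19} to the torsion-free module $\overline{M}$ (valid for all $\depth(R)\le 1$ at once), and then invokes Lemma~\ref{lemHW1} to conclude $M\cong\overline{M}$ is free. This is shorter and treats (i) and (ii) uniformly; your approach can be made to work, but only after the patches above.
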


\begin{proof} As $\dim(M\otimes_RM^{\ast}) = \dim(M)$, Theorem \ref{mainthm} implies that $M^{\ast}$ is free. Let $\overline{M}:=M/\torsion(M)$ denote the torsion-free part of $M$. Then it follows that $\overline{M}^{\ast} \cong M^{\ast}$, and hence $\overline{M}^{\ast}$ is free. If $\depth(R)\leq 1$, since $\overline{M}$ is torsion-free, it follows from \cite[2.13]{CG19} that $\overline{M}$ is free.  Now Lemma \ref{lemHW1} implies that $M$ is free. But as $M$ is Ulrich, $R$ must be regular.
\end{proof}

The following examples show that Corollary \ref{corR} is sharp:

\begin{eg} \label{egT1} Let $R=k[\![x,y]\!]/(xy)$ and $M=N=R/(x)$. Then $R$ is reduced, and $M$ and $M\otimes_RM$ are maximal Cohen-Macaulay Ulrich modules, but $M$ is not free. Note that $M$ does not have rank. Note also that $\Supp_R(M)=\{(x), (x,y)\}\neq \Spec(R)$. This example also shows that full support assumption in Theorem \ref{mainthm}(ii)(c) is needed: $\Tor_1^R(M,N)\cong k$ and hence it is torsion. 
\end{eg}

\begin{eg} (\cite[4.3]{HW94}) \label{egT2} Let $R=k[\![t^4, t^5, t^6]\!]$, $I=(t^4, t^5)$ and $J=(t^4, t^6)$. Then $R$ is a one-dimensional complete intersection domain and $I$, $J$ and $I\otimes_RJ$ are non-free maximal Cohen-Macaulay $R$-modules, each of which has rank. Note that $I$ and $J$ are not Ulrich modules as $\e_R(I)=\e_R(J)=\e(R)=4>\mu_R(I)=\mu_R(J)=2$.

Let $L=IJ$. Then, since $I\otimes_RJ$ is torsion-free, it follows that $IJ \cong I\otimes_RJ$. Therefore $L$ is an Ulrich module since $\mu_R(L)=4=\e_R(L)$. However $L \otimes_R L$ has torsion, that is, it is not maximal Cohen-Macaulay.
\end{eg}

\begin{eg}(\cite[7.3]{GT15}) \label{egT3} Let $R=k[\![t^9, t^{10}, t^{11}, t^{12}, t^{15}]\!]$, $I=(t^9, t^{10})$ and $J=(t^9, t^{12})$. Then $R$ is a one-dimensional domain which is not Gorenstein, and $I$, $J$ and $I\otimes_RJ \cong \omega_R$ are non-free maximal Cohen-Macaulay $R$-modules, each of which has rank. Note that $I$ and $J$ are not Ulrich modules as $\e_R(I)=\e_R(J)=\e(R)=9>\mu_R(I)=\mu_R(J)=2$.
\end{eg}

Finally, we examine the Ulrich properties when tensoring an Ulrich module with iterated tensor products or exterior powers. For this result, we recall that, if $M$ is an $R$-module, then the anti-symmetrization map $\iota_M^i:\bigwedge^i_R(M) \to \bigotimes^i_R M$ is given on elementary wedges by \[x_1 \wedge x_2 \cdots \wedge x_i \mapsto \sum_{\sigma \in S_i} (-1)^{\sgn(\sigma)} x_{\sigma(1)} \otimes_R x_{\sigma(2)} \otimes_R \cdots \otimes_R x_{\sigma(i)}.\] 
It is well known that $\iota^i_M$ is a split injection for any $i$ when $M$ is free. We provide a proof of this fact for lack of a reference in sufficient generality.

\begin{lem}\label{splitinjection} Let $R$ be a local ring. If $N$ is a free $R$-module, then $\iota_N^i$ is a split injection for each $i$. 
\end{lem}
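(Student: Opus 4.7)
The plan is to construct an explicit $R$-linear retraction $\pi \colon \bigotimes^i_R N \to \bigwedge^i_R N$ satisfying $\pi \circ \iota_N^i = \id$. The standard ``symmetric'' candidate $x_1 \otimes \cdots \otimes x_i \mapsto x_1 \wedge \cdots \wedge x_i$ only yields $\pi \circ \iota_N^i = i!\cdot \id$, which fails in the absence of a $\frac{1}{i!}$; the key idea is therefore to use the ordering on a chosen basis of $N$ to single out one representative per unordered tuple, thereby bypassing the need to divide by $i!$.

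Concretely, fix a basis $\{e_j\}_{j\in J}$ of $N$ and well-order $J$. Since $N$ is free, $\bigotimes^i_R N$ is free on the basis $\{e_{k_1}\otimes\cdots\otimes e_{k_i} : (k_1,\dots,k_i)\in J^i\}$, so I can define an $R$-linear map $\pi$ unambiguously by specifying its values on this basis. I will set
\[
\pi(e_{k_1}\otimes\cdots\otimes e_{k_i}) =
\begin{cases}
e_{k_1}\wedge e_{k_2}\wedge\cdots\wedge e_{k_i} & \text{if } k_1<k_2<\cdots<k_i,\\
0 & \text{otherwise,}
\end{cases}
\]
and extend $R$-linearly.

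To verify $\pi\circ\iota_N^i=\id_{\bigwedge^i_R N}$, it suffices to check on the standard basis $\{e_{j_1}\wedge\cdots\wedge e_{j_i} : j_1<\cdots<j_i\}$ of $\bigwedge^i_R N$. Expanding, I have
\[
\pi\bigl(\iota_N^i(e_{j_1}\wedge\cdots\wedge e_{j_i})\bigr) = \sum_{\sigma\in S_i}(-1)^{\sgn(\sigma)}\,\pi\bigl(e_{j_{\sigma(1)}}\otimes\cdots\otimes e_{j_{\sigma(i)}}\bigr).
\]
Since the $j_k$ are strictly increasing, the tuple $(j_{\sigma(1)},\dots,j_{\sigma(i)})$ is strictly increasing only when $\sigma$ is the identity, so every non-identity term is killed by $\pi$. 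The surviving identity contribution is exactly $e_{j_1}\wedge\cdots\wedge e_{j_i}$, which gives the desired identity.

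There is no substantial obstacle; the argument is a direct construction leveraging the fact that a free module admits a well-ordered basis. The only conceptual point worth emphasizing in the write-up is why we must break the symmetry of the natural surjection $\bigotimes^i_R N\onto \bigwedge^i_R N$, since the naive symmetric retraction gives a factor of $i!$ that need not be a unit in $R$.
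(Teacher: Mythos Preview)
Your proof is correct and takes a genuinely different route from the paper's. You construct an explicit retraction by using an ordered basis of $N$ to pick out the unique increasing monomial in each antisymmetrized sum; this is direct, elementary, and in fact does not use the local hypothesis at all---it works over any commutative ring with a free module admitting a totally ordered basis. The paper instead argues indirectly: it first checks that $\iota^i_N$ is injective by inspecting coefficients against the tensor basis, then forms the short exact sequence $0 \to \bigwedge^i_R N \to \bigotimes^i_R N \to C \to 0$, tensors with $k$, and uses a commutative square comparing $\iota^i_N \otimes k$ with $\iota^i_{N\otimes k}$ (the latter already known to be injective over the field $k$) to conclude $\Tor_1^R(C,k)=0$; since $R$ is local this forces $C$ to be free and the sequence to split. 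Your approach is shorter and more general; the paper's approach has the mild advantage of not requiring one to invent the asymmetric retraction, trading that for the Nakayama-type splitting criterion available in the local setting.
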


\begin{proof} Let $e_1,\dots,e_n$ be a basis for $N$. We may assume $i \le n$. We first claim that $\iota^i_N$ is injective. Pick $r:=\sum_{j_1<\cdots <j_i} r_{j_1 \cdots j_i}(e_{j_1} \wedge \cdots \wedge e_{j_i}) \in \ker(\iota^i_N)$. Then we have \[\sum_{j_1<\cdots <j_i} \sum_{\sigma \in S_i} (-1)^{\sgn \sigma} r_{j_1 \cdots j_i}\big(e_{\sigma(j_1)} \otimes_R \cdots \otimes_R e_{\sigma(j_i)}\big),\] where the terms $e_{\sigma(j_1)} \otimes_R \cdots \otimes_R e_{\sigma(j_i)}$ that appear in this sum are all distinct basis elements of $\bigotimes^i_R N$. It follows that each $r_{j_1 \cdots j_i}=0$. Thus $r=0$, so $\iota^i_N$ is injective. 
We thus have a short exact sequence $0 \rightarrow \bigwedge^i_R N \xrightarrow{\iota^i_N} \bigotimes^i_R N \to C \to 0$. Applying $- \otimes_R k$, we obtain an exact sequence 
\[0 \rightarrow \Tor^R_1(C,k) \rightarrow \Big(\bigwedge^i_R N\Big) \otimes_R k \xrightarrow{\iota^i_N \otimes_R k} \Big(\bigotimes^i_R N\Big) \otimes_R k \to C \otimes_R k \to 0.\]
But there is a commutative diagram:
\[\begin{tikzcd}
	{\big(\bigwedge^i_R N\big) \otimes_R k} & {\big(\bigotimes^i_R N\big) \otimes_R k} \\
	{\bigwedge^i_k(N \otimes_R k)} & {\bigotimes^i_R (N \otimes_R k)}
	\arrow["{\iota^i_N \otimes k}", from=1-1, to=1-2]
	\arrow["{\iota^i_{N \otimes k}}", from=2-1, to=2-2]
	\arrow[from=1-1, to=2-1]
	\arrow[from=1-2, to=2-2]
\end{tikzcd}\]
whose vertical arrows are the natural isomorphisms. From above, we know that $\iota^i_{N \otimes_R k}$ is injective, so commutativity of the diagram implies that $\iota^i_N \otimes k$ is injective as well. Thus $\Tor^R_1(C,k)=0$, which implies that $C=\coker(\iota^i_N)$ is free over $R$. Thus $\iota^i_N$ is a split injection. 
\end{proof}

\begin{prop} \label{fitting} Let $R$ be a local ring and let $M, \,N_1, \ldots, N_t$ be nonzero $R$-modules. Assume:
\begin{enumerate}[\rm(i)]
\item $M$ is unmixed and $\e_R(M)=\mu_R(M)$.
\item $\dim_R(M)=\dim_R(M\otimes_RN_i)$ for all $i=1, \ldots, t$.
\item $M\otimes_R N_i$ is Cohen-Nacaulay for all $i=1, \ldots, t$.
\end{enumerate}
Then the following hold:
\begin{enumerate}[\rm(a)]
\item $M \otimes_R \big(\bigotimes^t_{i=1} R^{\oplus \mu_R(N_i)}\big) \cong M \otimes_R \big(\bigotimes^t_{i=1} N_i\big)$.
\item $M \otimes_R N_1 \otimes_R \cdots \otimes_R N_t$ and $M \otimes_R \big(\bigwedge^r_R N_i\big)$ are Ulrich for all $i=1, \ldots, t$ and all $r \geq 0$. 
\item If $\Tor_1^R(M, N_1 \otimes_R \cdots \otimes_R N_t)=0$, then $N_i$ is free for all $i=1, \ldots, t$.
\end{enumerate}
\end{prop}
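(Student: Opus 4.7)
My plan is to prove parts (a), (b), and (c) in that order, with (a) serving as the keystone from which (b) follows at once and (c) by a short additional descent argument.

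For (a), since $\bigotimes_{i=1}^t R^{\oplus \mu_R(N_i)} \cong R^{\oplus \prod_i \mu_R(N_i)}$, the claim reduces to showing $M \otimes_R N_1 \otimes_R \cdots \otimes_R N_t \cong M^{\oplus \prod_i \mu_R(N_i)}$. I would start with the single-factor version. The hypothesis $\e_R(M) = \mu_R(M)$ means $M$ is Ulrich with respect to $\fm$ and $M/\fm M$ is free over $k$, so Lemma \ref{lem:c}(i) with $I = \fm$ and $c = 1$ gives that each $M \otimes_R N_i$ is itself Ulrich with
\[\e_R(M \otimes_R N_i) = \e_R(M) \cdot \mu_R(N_i) = \e_R(M^{\oplus \mu_R(N_i)}).\]
Let $K$ be the kernel of the natural surjection $M^{\oplus \mu_R(N_i)} \twoheadrightarrow M \otimes_R N_i$. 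Because $M$ is unmixed, so is $M^{\oplus \mu_R(N_i)}$; thus a nonzero $K$ would satisfy $\dim_R(K) = \dim_R(M) = \dim_R(M \otimes_R N_i)$, and additivity of Hilbert-Samuel multiplicity on equidimensional short exact sequences would force $\e_R(K) = 0$, contradicting the positivity of $\e_R(-)$ on nonzero modules. Hence $M \otimes_R N_i \cong M^{\oplus \mu_R(N_i)}$, and a straightforward induction on $t$ using associativity of the tensor product yields (a).

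For (b), the first assertion is immediate from (a), as a direct sum of copies of the Ulrich module $M$ is Ulrich. For the exterior power, set $J = \Ann_R(M)$. By Lemma \ref{lem:J}, the single-factor isomorphism $M \otimes_R N_i \cong M^{\oplus \mu_R(N_i)}$ is equivalent to $N_i/JN_i \cong (R/J)^{\oplus \mu_R(N_i)}$. Exterior powers commute with base change, so
\[(\Wedge^r_R N_i)/J(\Wedge^r_R N_i) \cong \Wedge^r_{R/J}(N_i/JN_i) \cong (R/J)^{\oplus \binom{\mu_R(N_i)}{r}},\]
which is free over $R/J$ of rank $\binom{\mu_R(N_i)}{r} = \mu_R(\Wedge^r_R N_i)$. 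Applying Lemma \ref{lem:J} in the direction (iv) $\Rightarrow$ (i) with $\Wedge^r_R N_i$ in place of $N$ yields $M \otimes_R \Wedge^r_R N_i \cong M^{\oplus \binom{\mu_R(N_i)}{r}}$, a direct sum of copies of $M$, and hence Ulrich.

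For (c), set $T = N_1 \otimes_R \cdots \otimes_R N_t$. Part (a) gives $M \otimes_R T \cong M^{\oplus \mu_R(T)}$, so Proposition \ref{High Bridge Lemma}(iii) combined with the hypothesis $\Tor_1^R(M, T) = 0$ forces $T$ to be free over $R$. It remains to descend from $T$ being free to each $N_i$ being free; by induction on $t$, this reduces to the $t = 2$ case, namely: if $A \otimes_R B$ is a nonzero free module over a local ring with $A, B$ nonzero and finitely generated, then $A$ and $B$ are both free. To see this, the surjection $B^{\oplus \mu_R(A)} \twoheadrightarrow A \otimes_R B$ arising from a minimal presentation of $A$ is between modules with the same minimal number of generators $\mu_R(A)\mu_R(B)$. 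The target being free forces the sequence to split, and comparing minimal numbers of generators shows the kernel has $\mu_R = 0$ and so vanishes by Nakayama's lemma. Thus $B^{\oplus \mu_R(A)} \cong A \otimes_R B$ is free, whence $B$, being a direct summand of a free module, is free; symmetrically $A$ is free. The principal technical subtlety lies in this descent step of (c); everything else is a direct application of the preparatory lemmas and proposition already established.
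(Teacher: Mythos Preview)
Your proof is correct and, for parts (a) and (c), follows essentially the same route as the paper: for (a) you invoke Lemma~\ref{lem:c}(i) with $c=1$ to get the multiplicity equality and then kill the kernel via unmixedness, exactly as the paper does; for (c) both you and the paper reduce to freeness of $T=N_1\otimes_R\cdots\otimes_R N_t$ via Proposition~\ref{High Bridge Lemma}, and your explicit descent argument (splitting the surjection $B^{\oplus\mu_R(A)}\twoheadrightarrow A\otimes_R B$ onto a free target and counting generators) fills in what the paper leaves as ``which forces each $N_i$ to be free''.

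The genuinely different step is the exterior-power half of (b). The paper argues via the anti-symmetrization map: it proves a preparatory lemma that $\iota^r_N:\bigwedge^r_R N\to\bigotimes^r_R N$ is a split injection when $N$ is free, then tensors the commutative square comparing $\bigwedge^r p$ and $\bigotimes^r p$ with $M$, and uses that $M\otimes(\bigotimes^r p)$ is an isomorphism together with the split injectivity of $M\otimes\iota^r_{R^{\oplus\mu_R(N)}}$ to force $M\otimes(\bigwedge^r p)$ to be an isomorphism. Your route is cleaner: from $M\otimes_R N_i\cong M^{\oplus\mu_R(N_i)}$ you pass via Lemma~\ref{lem:J} to $N_i/JN_i$ being free over $R/J$, use that exterior powers commute with base change to get $(\bigwedge^r_R N_i)/J(\bigwedge^r_R N_i)$ free over $R/J$, and then apply Lemma~\ref{lem:J} in reverse. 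This avoids the anti-symmetrization lemma entirely and makes the argument a direct corollary of the Fitting-ideal characterization already established; the paper's approach, by contrast, is more hands-on and explains concretely why the natural map $M\otimes_R\bigwedge^r_R R^{\oplus\mu_R(N_i)}\to M\otimes_R\bigwedge^r_R N_i$ is an isomorphism.
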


\begin{proof} For each $i$, choose a surjection $p_i:R^{\oplus \mu_R(N_i)} \twoheadrightarrow N_i$. By Lemma~\ref{lem:c}(i), since $M$ is unmixed, we see $\ker(M \otimes p_i) = 0$. Thus $M \otimes p_i$ is an isomorphism, which implies that $M$ is Cohen-Macaulay and hence Ulrich. 
Inductively, we see that $M \otimes \big(\bigotimes_{i=1}^t p_i\big): M \otimes_R \big(\bigotimes^t_{i=1} R^{\oplus \mu_R(N_i)}\big) \to M \otimes_R \big(\bigotimes^t_{i=1} N_i\big)$ is an isomorphism, from which it follows that $M \otimes_R \big(\bigotimes^t_{i=1} N_i\big)$ is Ulrich. For the claim about modules $M \otimes_R \big(\bigwedge^r_R N_i\big)$, we set $N:=N_i$ and $p:=p_i$. There is a commutative diagram:  
\[\begin{tikzcd}
	{\bigwedge^r_R R^{\oplus \mu_R(N)}} & {\bigwedge^r_RN} \\
	{\bigotimes^r_R R^{\oplus \mu_R(N)}} & {\bigotimes^r_R N}
	\arrow["{\bigotimes^r p}", two heads, from=2-1, to=2-2]
	\arrow["{\bigwedge^r p}", two heads, from=1-1, to=1-2]
	\arrow["{\iota^r_{R^{\oplus \mu_R(N)}}}"', from=1-1, to=2-1]
	\arrow["{\iota^r_N}", from=1-2, to=2-2]
\end{tikzcd}\]
Applying $M \otimes_R -$ to this diagram, we obtain another commutative diagram:
\[\begin{tikzcd}
	{M \otimes_R \big(\bigwedge^r_R R^{\oplus \mu_R(N)}\big)} & {M \otimes_R \big(\bigwedge^r_R N\big)} \\
	{M \otimes_R \big(\bigotimes^r_R R^{\oplus \mu_R(N)}\big)} & {M \otimes_R \big(\bigotimes^r_R N\big)}
	\arrow["{M \otimes (\bigotimes^r p)}", from=2-1, to=2-2]
	\arrow["{M \otimes (\bigwedge^r p)}", from=1-1, to=1-2]
	\arrow["{M \otimes \iota^r_{R^{\oplus \mu_R(N)}}}"', from=1-1, to=2-1]
	\arrow["{M \otimes \iota^r_N}", from=1-2, to=2-2]
\end{tikzcd}\]
From Lemma \ref{splitinjection}, we have that $\iota^r_{R^{\oplus \mu_R(N)}}$ is a split injection,  so $M \otimes \iota^r_{R^{\oplus \mu_R(N)}}$ remains injective. Consequently, it follows from above that $M \otimes \big(\bigotimes^r p\big)$ is an isomorphism. The commutativity of the diagram then forces $M \otimes \big(\bigwedge^r p\big)$ to be injective, and hence it is an isomorphism. Thus $M \otimes_R \big(\bigwedge^r_R N\big)$ is Ulrich as well.

Next, assume $\Tor_1^R(M,N_1 \otimes_R \cdots \otimes_R N_t)=0$. Then we have the following short exact sequence:
$$0 \to M \otimes_R \Omega^1_R (N_1 \otimes_R \cdots \otimes_R N_t) \to M \otimes_R \Big(\bigotimes^t_{i=1} R^{\oplus \mu_R(N_i)} \Big) \xrightarrow{M \otimes (\bigotimes_{i=1}^t p_i)}  M \otimes_R \Big(\bigotimes^t_{i=1} N_i \Big).$$
As we know $M \otimes \big(\bigotimes_{i=1}^t p_i\big)$ is bijective, it follows that $M \otimes_R \Omega^1_R (N_1 \otimes_R \cdots \otimes_R N_t)=0$. As $M\neq 0$, this forces $\Omega^1_R (N_1 \otimes_R \cdots \otimes_R N_t)$ to be zero, which forces each $N_i$ to be free. 
\end{proof}

The next example shows the hypothesis that $M$ is Ulrich in Proposition \ref{fitting} cannot be relaxed to even assuming $M$ is $2$-Ulrich.

\begin{eg} Let $R=k[\![x,y,z]\!]/(xy)$ and let $M=N=R/(z)$. Then $M\otimes_R N \cong M$ is Cohen-Macaulay, but not Ulrich since $\mu_R(M) = 1 \neq 2 = \e_R(M)$.
\end{eg}

\section*{Acknowledgements}
Part of this work started when Araya and Yao visited West Virginia University (WVU) in August 2018; we thank Tokuji Araya for his comments and help on an earlier version of the manuscript. We also thank Jonathan Monta\~no for helpful discussions. Part of this work was completed during a visit of Lyle, Takahashi and Yao to WVU in May 2023. We thank the WVU School of Mathematical and Data Sciences for these opportunities. Takahashi was partly supported by JSPS Grant-in-Aid for Scientific Research 23K03070.

\bibliographystyle{plain}
\bibliography{mybib}

\end{document}